\documentclass[10pt]{amsart}
\usepackage[utf8]{inputenc}
\usepackage{graphicx, cite, mathtools}
\usepackage{amsmath,amsthm,amssymb,tocvsec2,pdflscape,cite}
\usepackage{latexsym}
\usepackage{esint}
\usepackage{mathtools}
\usepackage{color,soul}
\usepackage{enumerate}
\usepackage[T1]{fontenc}
\usepackage[mathscr]{euscript}

\usepackage[colorlinks=true, allcolors=blue, breaklinks=true]{hyperref}

\allowdisplaybreaks
\numberwithin{equation}{section}
\usepackage{cleveref}
\newtheorem{theorem}{Theorem}[section]
\newtheorem{thma}{Theorem}
\newtheorem{lemma}[theorem]{Lemma}
\newtheorem{proposition}[theorem]{Proposition}
\newtheorem{corollary}[theorem]{Corollary}

\theoremstyle{definition}
\newtheorem{definition}{Definition}

\title[Leray-Trudinger Type Exponential Integrability]{Leray-Trudinger Type Exponential Integrability in Log-Weighted Sobolev Spaces }

\author{Adimurthi}
\address{Department of Mathematics, Indian Institute of Technology, Kanpur, India}
\email{adiadimurthi@gmail.com}

\author{Sourav Ghosh}
\address{Department of Mathematics, Indian Institute of Science, Bangalore, India}
\email{souravghosh1@iisc.ac.in}

\author{Arka Mallick}
\address{Department of Mathematics, Indian Institute of Science, Bangalore, India}
\email{arkamallick@iisc.ac.in}

\subjclass[2020]{Primary 46E35; Secondary 46E30, 26D10, 26D15}
\keywords{Moser-Trudinger inequalities, Leray-Trudinger inequalities, weighted Sobolev spaces, logarithmic weights, Leray energy, Hardy inequalities, Orlicz spaces.}

\begin{document}
\begin{abstract}
In this article, we conduct a comprehensive study of weighted Sobolev spaces with logarithmic weights, orginially introduced by Calanchi and Ruf in \cite{RufClanchi2015, RufN}, to analyze the sharp exponential integrability of \emph{radial functions} belonging to these spaces. By exploring the connection between these logarithmically weighted energies and the Leray energy, we expand the framework to incorporate \emph{non-radial} functions. More precisely, we establish optimal exponential integrability for general functions in the spirit of optimal Leray-Trudinger inequalities established in \cite{di2024optimal}. Furthermore, we prove sharp versions of these inequalities when restricted to radial functions. Notably, the inequalities presented here are fundamentally different in nature from those of Calanchi and Ruf, for which the non-radial extension fails to hold.
\end{abstract}

\maketitle
\section{Introduction and Statement of Main Results}
The Moser-Trudinger inequality is one of the most fundamental tools in the theory of PDE. The inequality in its sharpest form, which is  derived by Moser in \cite{Moser} says

\begin{align} \label{moser-trudinger}
        \sup_{\left\lbrace u \in W^{1,N}_{0}(\Omega) : \lVert \nabla u \rVert_N \le 1\right \rbrace}  \fint_{\Omega} e^{\alpha \lvert u \rvert^{\frac{N}{N-1}}} \, dx \le C,
    \end{align}
    if and only if $\alpha \le \alpha_{N}  \coloneqq N \omega_{N-1}^{\frac{1}{N-1}} ,$ for any bounded domain $\Omega \subset \mathbb{R}^N$. Here $\omega_{N-1}$ denotes the surface area of $\partial \mathbb{B}_N$, where $\mathbb{B}_N \coloneqq B(0,1) \subset \mathbb{R}^N$, and $C \equiv C\left(N\right)>0$ is a positive constant which depends only on the dimension $N$. A non-sharp version of \eqref{moser-trudinger} follows from the earlier works of Yudovich \cite{Yudovich},  Peetre \cite{Peetre_Lorentzembedding}, Pohozaev \cite{Poh} and Trudinger \cite{Trudinger}, where it was proved that $ W^{1,N}_{0}(\Omega) \hookrightarrow L^{\psi_N}\left(\Omega\right)$. Here $L^{\psi_N}$ is the Orlicz-Space given by the Young function $\psi_{N}(t)= \exp(t^{N'})-1$, for $t\geq0$. The optimality of this embedding was established in a subsequent paper by Hempel, Morris and Trudinger \cite{HMT}. 
    \par In the intervening years, numerous refinements of \eqref{moser-trudinger} of the following type have been explored 
     \begin{align}\label{reduced energy with gen potential}
     \sup_{\left\lbrace u \in C_{c}^{1}\left(\Omega\right) : E_{V}\left(u\right) \le 1\right \rbrace}  \int_{\Omega} e^{\alpha u ^{2}} \, dx <\infty, \ \alpha\leq \alpha_{2} =4\pi
    \end{align}
  where $E_{V}$ is a nonnegative energy functional defined on $C_c^{\infty}\left(\Omega\right)$ by 
  \begin{align}\label{reduced energy}
  E_{V}\left(u\right):= \lVert \nabla u \rVert^{2}_{2} - \int_{\Omega} V(x) \left\lvert u(x) \right\rvert^2 dx.
  \end{align}
 To derive such a result, it is necessary to assume that the functional satisfies the \emph{weak coercivity} property. More precisely, there must exists an open set $\Omega_1 \subset \subset \Omega$ and a constant $C>0$ such that 
 \begin{align}\label{weak-coercivity}
 E_{V}\left(u\right) \geq C \int_{\Omega_1} \left\lvert u\right \rvert^2 , \mbox{ for all } u \in C_c^{1}\left(\Omega\right).
 \end{align}
   Otherwise, by the ground state alternative of Murata \cite{murata} (see also \cite{pinchover-tintarev}) we get the existence of a sequence $u_{k}\in C_c^{\infty}\left(\Omega\right)$ converging to a nontrivial function $\phi$ in $H^{1}_{loc}\left(\Omega\right)$ which makes the inequality \eqref{reduced energy with gen potential} invalid.   
 If we assume that the potential is a constant, specifically if  $V\equiv \lambda < \lambda_{1} \left(\Omega\right)$ on $\Omega$, where $\lambda_{1} \left(\Omega\right)$ is the first eigenvalue of the Dirichlet Laplacian, then \eqref{reduced energy with gen potential} directly follows form the seminal work of  Adimurthi and Druet \cite{Adi-Druet} (see \cite{Yang-Adi-Druet-N} for a higher dimensional generalization). Their work was pioneering and introduced  a novel blow up analysis in dimension $N=2$. Analogous blow up analysis were employed in a non-compact setting by Dong and Ye in \cite{WangYe}. In fact they established \eqref{reduced energy with gen potential} with $\Omega =\mathbb{B}_{2}$, where $V(x)= \left(2-2\left\lvert x \right\rvert^{2}\right)^{-2}$, for $x\in \mathbb{B}_{2}$ (see  \cite{LY} and \cite{vanNguyen-N-Hardy-MT} for generalizations). Note that the functional
 $$E_{V}\left(u\right)=  \lVert \nabla u \rVert^{2}_{2} - \frac{1}{4}\int_{\Omega} \frac{1}{\left(1-\left\lvert x \right\rvert^{2}\right)^{2}} \left\lvert u(x) \right\rvert^2 dx$$ is weakly coercive because of the improved hardy inequalities derived in \cite{BreMar}. Finally, in \cite{T1}, Tintarev considered a general radial potential $V$ in $\mathbb{B}_2$ for which \eqref{reduced energy} is weakly coercive. He established \eqref{reduced energy with gen potential} under the assumption that there exists $\kappa>0$ such that, 
 \begin{align}\label{tintarev condition}
 \lim_{r\to 0} r^2 \left(\log \frac{1}{r}\right)^{2+\kappa}V(r)=0
 \end{align}
 extending the results of both \cite{Adi-Druet} and \cite{WangYe}.
 
 In the borderline case $\kappa= 0$, specifically for the potential $V(x)= V_{Leray}(x) = \left(2\lvert x  \rvert \log \left(1/|x|\right) \right)^{-2}$,  the nonnegativity of the functional in \eqref{reduced energy} was proved by Leray in \cite[Inequality (5), Chapter III]{leray}. More generally, for $N\geq 2$ and a bounded domain $\Omega \subset \mathbb{R}^N$ containing the origin, with $R_{\Omega} \coloneqq \sup_{\Omega}\lvert x \rvert$ and $X_1(r): = \left(\log (e/r)\right)^{-1},$ for $0\leq r\leq1$, the \emph{Leray energy} functional
\begin{align}\label{reduced energy leray}
    I_{N,\Omega}[u] \coloneqq \int_{\Omega} \lvert \nabla u \rvert^N \, dx - \left(\frac{N-1}{N}\right)^N \int_{\Omega} \frac{\lvert u \rvert^N}{\lvert x \rvert^N} X_1^N\left(\frac{\lvert x \rvert}{R_{\Omega}}\right) \, dx, \  u \in W^{1,N}_0(\Omega)
\end{align}
is weakly coercive due to the improved hardy inequalities derived in \cite{ACR} and \cite{BFT} independently. However, Psaradakis and Spector \cite{PsaradakisJFA} observed that \eqref{reduced energy with gen potential} (for any $\alpha>0$), as well as its higher dimensional analogues, fails to hold, thus remarkably giving an example of weakly coercive functional for which \eqref{reduced energy with gen potential} is false. In the same paper, they proved that for any $\varepsilon>0$, there exists constant $\alpha \equiv \alpha\left(N, \varepsilon\right)>0$ such that
\begin{align}\label{leray-trudinger}
 \sup_{\left \lbrace u \in W^{1,N}_{0}\left(\Omega\right) : I_{N,\Omega}\left[u\right] \le 1\right\rbrace} \int_{\Omega} e^{\alpha\left(\lvert u \rvert X_1^{\varepsilon}\left(\frac{\lvert x \rvert}{R_{\Omega}}\right)\right)^{N'}} \, dx < \infty,
\end{align}
 which was termed the Leray-Trudinger inequality. 
 \par In a subsequent article by Tintarev and the third author \cite{TinMal}, it was observed that the growth function within the integrand of \eqref{leray-trudinger} is suboptimal. Indeed, because of the ground state representation (see for example, \cite[Proposition 2.6]{PsaradakisJFA})
 \begin{align}\label{ground state rep}
    I_{2,\Omega}[u] = \int_{\Omega} \lvert \nabla v \rvert^2 X_1^{-1} \, dx \coloneqq J_{2,\Omega} [v] \mbox{ for } u\in C_c^1\left(\Omega\setminus \lbrace 0\rbrace\right) \mbox{ with } v= X_1^{\frac{1}{2}} u,
\end{align}
for $X_2: = X_1\circ X_1$ and all \emph{radial} functions $u \in W^{1,2}_{0}\left(\mathbb{B}_2\right) $ and $v \in C_c^1\left(\mathbb{B}_{2}\setminus \lbrace 0\rbrace\right)$ we have
\begin{align}\label{dim2 leray trudinger}
\sup_{I_{2,\mathbb{B}_{2}}\left[u\right] \le 1} \int_{\mathbb{B}_{2}} e^{\alpha\left(\lvert u \rvert X_2^{1/2}\left(\lvert x \rvert\right)\right)^{2}} \, dx = \sup_{J_{2,\mathbb{B}_{2}}\left[v\right] \le 1} \int_{\mathbb{B}_{2}} e^{\alpha v^{2} X_2 X^{-1}_1} \, dx <\infty
\end{align}
if $\alpha < 4\pi$. Here the finiteness follows from \cite[Lemma 5]{RufClanchi2015}. This observation led to an intermediate improvement of \eqref{leray-trudinger} in \cite{TinMal} where $X_1^{\varepsilon}$ is replaced with $X_2^{2/N}$. They also proved that \eqref{leray-trudinger} fails if $X_1^{\varepsilon}$ is replaced with $X_2^{\kappa}$ for any $\kappa<1/N$. Finally, the optimal version of \eqref{leray-trudinger}, which says that for any bounded domain $\Omega$ in $\mathbb{R}^N$, containing the origin, we have  
\begin{align}\label{op-leray-trudinger}
 \mathcal{I}_{N,\alpha}:= \sup_{\left \lbrace u \in W^{1,N}_{0}\left(\Omega\right) : I_{N,\Omega}\left[u\right] \le 1\right\rbrace} \fint_{\Omega} e^{\alpha\left(\lvert u \rvert X_2^{\frac{1}{N}}\left(\frac{\lvert x \rvert}{R_{\Omega}}\right)\right)^{N'}} \, dx < \infty,
\end{align}
for some $\alpha \equiv \alpha\left(N\right)>0$, was established by Di Blasio, Pisante and Psaradakis in \cite{di2024optimal}.
 
 \par Despite these developments, establishing the sharp form of \eqref{op-leray-trudinger} remains an open question. Furthermore, \eqref{ground state rep} and \eqref{dim2 leray trudinger} indicate that an investigation into weighted Sobolev spaces is essential. Consequently, this article focuses on the optimal embedding properties of general weighted Sobolev spaces  $W^{1,N}_{0}(\Omega, w_{i\beta})$. Spaces of this type were initially  introduced by Calanchi and Ruf in \cite{RufClanchi2015} and \cite{RufN} as the closure of  $C^1_c(\Omega)$ under the norm 
 \begin{align}\label{weighted norms}
 \left \lVert \nabla u \right\rVert_{N, w_{i\beta}, \Omega} := \left(\int_{\Omega} \left\lvert \nabla u\right\rvert^N w_{i\beta}\ dx \right)^{\frac{1}{N}},
 \end{align}
where $i=1,2$, $\beta >0$, and the weights are given by,

\begin{align} \label{weights w}
   \begin{cases}
    w_{1\beta}(x) &= Y_{1}^{-\beta(N-1)}\left(\frac{\lvert x \rvert}{R_{\Omega}}\right), \mbox{ with }Y_1(r)=  \left(\ln \frac{1}{r} \right)^{-1},\mbox{ and }\\ 
    w_{2\beta}(x) &= X_{1}^{-\beta(N-1)}\left(\frac{\lvert x \rvert}{R_{\Omega}}\right),
    \end{cases}
\end{align}
for $x\in B\left(0, R_{\Omega}\right)$.
If the context is clear, we will write $X_1$, $Y_1$ and $\left \lVert \nabla u \right\rVert_{N, w_{i\beta}} $ to mean $X_1\left({\lvert x \rvert}/{R_{\Omega}}\right)$,  $Y_1\left({\lvert x \rvert}/{R_{\Omega}}\right)$ and $\left \lVert \nabla u \right\rVert_{N, w_{i\beta}, \Omega}$  respectively. We extend $w_{1\beta}$ and $w_{2\beta}$ to all of $\mathbb{R}^N$ by setting them to $1$ on $B^{c}\left(0,R_{\Omega}\right)$. Note that, 
\begin{align}\label{A-N prop}
\begin{cases}
w_{1\beta}\in A_{N}, \mbox{ if } 0<\beta<1 \mbox{ and }\\
w_{2,\beta} \in A_{N}, \mbox{ if } 0<\beta<\infty.
\end{cases}
\end{align}
On the other hand, if $\beta \geq 1$, then $w_{1\beta} \notin A_{N}$. Here, $A_{N}$ denotes the Muckenhoupt’s class of exponent $N$  (see Lemma \ref{ap weights lem} in Section \ref{prel}). Therefore, throughout the article we will avoid considering the weight $w_{1\beta}$, when $\beta\geq 1$.

The embedding properties of $W^{1,N}_{0, rad}\left(\mathbb{B}_{N}, w_{i\beta}\right)$ i.e. the subspace of all radial functions within $W^{1,N}_{0}\left(\mathbb{B}_{N}, w_{i\beta}\right)$ was studied in Calanchi and Ruf \cite{RufClanchi2015, RufN}. In fact they established sharp exponential integrability results in the spirit of \eqref{moser-trudinger} for functions in $W^{1,N}_{0, rad}\left(\mathbb{B}_{N}, w_{i\beta}\right)$. However, aside form the fact that (see \cite[Proposition 8]{RufClanchi2015} and \cite[Proposition 12]{RufN}) 

\begin{align}\label{ruf calanchi sup infinite}
\sup_{\left \lbrace u \in W^{1,N}_{0}\left(\mathbb{B}_N, w_{i\beta}\right) :\left \lVert \nabla u \right\rVert_{N, w_{i\beta}} \le 1\right\rbrace} \fint_{\mathbb{B}_N} e^{\alpha \lvert u \rvert^{N'}} \, dx =\infty
\mbox{ for any } \alpha > N \omega_{N-1}^{\frac{1}{N-1}}, 
\end{align}
 and for $i=1,2, 0<\beta \leq 1$, not much is known about the space $W^{1,N}_{0}\left(\mathbb{B}_{N}, w_{i\beta}\right)$. 
\par {\bf{Main results for $\beta=1$}}: 

In order to understand the embedding properties for the full space, let us consider  the specific case where $N=2$, $i=2$ and $\beta=1$. Since $w_{21}\geq 1$, so by \eqref{moser-trudinger} and \eqref{ruf calanchi sup infinite} we conclude that 
\begin{align}\label{moser-truding w21}
\sup_{\left \lbrace u \in W^{1,N}_{0}\left(\mathbb{B}_N, w_{21}\right) :\left \lVert \nabla u \right\rVert_{N, w_{21}} \le 1\right\rbrace} \fint_{\mathbb{B}_N} e^{\alpha \lvert u \rvert^{N'}} \, dx <\infty
\mbox{ iff } \alpha \leq N \omega_{N-1}^{\frac{1}{N-1}}, 
\end{align}
which implies 
\begin{align}\label{embedding for dimension N}
W^{1,N}_{0}\left(\mathbb{B}_{N}, w_{21}\right) \hookrightarrow L^{\psi_N}\left(\mathbb{B}_{N}\right), \mbox{ with } \psi_N(t)=\exp(t^{N'})-1 , \mbox{ for } t\geq 0. 
\end{align}
Since the energy is weakened by the weight, there is a scope of improvement of this inequality. This was first explored by Calanchi and Ruf in \cite{RufClanchi2015, RufN}, where they proved 
    \begin{align}\label{ruf2}
        \sup_{\left \lbrace u \in W^{1,N}_{0,rad}\left(\mathbb{B}_N,w_{21}\right) : \left \lVert \nabla u\right\rVert_{N, w_{21}}\le 1\right\rbrace} \int_{\mathbb{B}_N}  e^{\alpha e^{\omega_{N-1}^{\frac{1}{N-1}} \lvert u \rvert^{N'}}} \, dx < \infty, \mbox{ iff } \alpha \leq N.
    \end{align}
    However, this does not have any non-radial extension due to \eqref{ruf calanchi sup infinite}. Neverthelss, we notice that, the first equality in \eqref{dim2 leray trudinger} holds true even for non radial functions $u$ and $v$.  By combining this observation with that fact $W^{1,2}_{0}\left(\mathbb{B}_{2}, w_{21}\right)=W^{1,2}_{0}\left(\mathbb{B}_{2}\setminus \lbrace 0 \rbrace, w_{21}\right)$ (see Theorem \ref{approximation} in Section \ref{prop w-sob spaces}) and the optimal Leray-Trudinger inequality \eqref{op-leray-trudinger} we deduce that 
\begin{align}\label{optimal embedding for dimension 2}
W^{1,2}_{0}\left(\mathbb{B}_{2}, w_{21}\right) \hookrightarrow L^{\phi_2}\left(\mathbb{B}_{2}\right), 
\end{align}
where for $N\geq 2$, $L^{\phi_N}\left(\Omega \right)$ denotes the Musielak-Orlicz space (see Definition \ref{M-O space} below) with the generalized $\Phi$-function $\phi_{N}$ defined by 
\begin{align}\label{gen young function}
\phi_{N}\left(x, t\right): = \exp{\left(X_2\left( \frac{\lvert x\rvert}{R_{\Omega}} \right)X_1^{-1}\left( \frac{\lvert x\rvert}{R_{\Omega}} \right) t^{N'}\right)}-1,
\end{align}
for $x\in \Omega \subset \mathbb{R}^N$ and $t\geq 0.$ Clearly, \eqref{optimal embedding for dimension 2} improves \eqref{embedding for dimension N} in dimension $N=2$, as $X_2X_1^{-1}(|x|/R_\Omega) \geq 1$, for in $x\in \Omega$.
Our first result generalizes \eqref{optimal embedding for dimension 2} to any dimension $N>2$. More precisely, we have the following theorem.

\begin{theorem} \label{non radial moser}
    Let $N\geq2$ and $\Omega\subset \mathbb{R}^N$ be any bounded domain containing the origin. Then there exists a positive constant $\alpha \equiv \alpha\left(N\right)$ such that,
    \begin{equation} \label{equation non radial moser}
\mathcal{J}_{N, \alpha, w_{21}}:=  \sup_{\left \lbrace u \in W^{1,N}_{0}\left(\Omega, w_{21}\right) :\left \lVert \nabla u \right\rVert_{N, w_{21}} \le 1\right\rbrace}  \fint_{\Omega} e^{\alpha \lvert u \rvert^{N'} X_2 X_1^{-1}} \, dx <\infty.
    \end{equation}
In particular, $W^{1, N}_{0}\left(\Omega, w_{21}\right) \hookrightarrow L^{\phi_N}\left(\Omega\right)$. Moreover, if $f: \Omega \to [0,\infty]$ is measurable  and there exists $a\in \overline{\Omega}$ such that $fX_2^{-1}X_1 \to \infty$ as $x\to a$, then $W^{1, N}_{0}\left(\Omega, w_{21}\right)$ does not embed in $L^{\phi}\left(\Omega\right)$, where  $\phi(x,t) := e^{t^{N'}f(x)}-1$, for $x\in \Omega$ and $t\ge0$. In other words, 
 \begin{equation} \label{equation non radial moser optimal}
 \sup_{\left \lbrace u \in W^{1,N}_{0}\left(\Omega, w_{21}\right) :\left \lVert \nabla u \right\rVert_{N, w_{21}} \le 1\right\rbrace}  \fint_{\Omega} e^{\alpha \lvert u \rvert^{N'} f} \, dx = \infty, \mbox{ for any } \alpha>0.
    \end{equation}
\end{theorem}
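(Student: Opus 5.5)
Our plan is to reduce the first part of the theorem to the optimal Leray--Trudinger inequality \eqref{op-leray-trudinger} through an $N$-dimensional analogue of the ground state representation \eqref{ground state rep}. We work with $v\in C^1_c(\Omega\setminus\{0\})$, which is dense in $W^{1,N}_0(\Omega,w_{21})$ by Theorem \ref{approximation}; Fatou's lemma then extends the estimate to the whole space. We set $u := X_1^{-\frac{N-1}{N}} v$, so that $v = X_1^{\frac{N-1}{N}}u$. Then
\[
|u|^{N'}X_2 = |v|^{N'}X_1^{-1}X_2 ,
\]
and so the integrand in \eqref{equation non radial moser} equals $e^{\alpha(|u|X_2^{1/N})^{N'}}$. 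It therefore suffices to prove a one-sided ground state estimate
\[
I_{N,\Omega}[u] \le C_N \int_\Omega |\nabla v|^N X_1^{-(N-1)}\,dx ,
\]
and then apply \eqref{op-leray-trudinger} to $u/C_N^{1/N}$, shrinking $\alpha$ accordingly.

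The main obstacle is this estimate for $N>2$, where the identity available in dimension two is replaced by an inequality. We compute
\[
\nabla u = X_1^{-\frac{N-1}{N}}\nabla v + c\,v\,X_1^{-\frac{N-1}{N}}\,\frac{X_1}{|x|}\,\frac{x}{|x|}, \qquad c=-\tfrac{N-1}{N},
\]
using $\frac{d}{dr}X_1 = X_1^2/r$. Rather than expanding $|a+b|^N$ via Lindqvist-type inequalities, we first try the cruder bound $|a+b|^N\le 2^{N-1}(|a|^N+|b|^N)$. This gives
\[
\int|\nabla u|^N \lesssim \int |\nabla v|^N X_1^{-(N-1)} + \int \frac{|v|^N}{|x|^N}X_1 .
\]
We then control the last term by a weighted Hardy inequality $\int |v|^N|x|^{-N}X_1 \le C\int|\nabla v|^N X_1^{-(N-1)}$. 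This follows by a one-dimensional argument along rays: with $t=\log(e R_\Omega/r)$ the weight pair becomes $(t^{N-1}, t^{-1})$ in $dt$, which is a standard Hardy pair of the correct critical type; alternatively it can be read off the improved Hardy inequalities of \cite{ACR, BFT}. The Leray subtraction in $I_{N,\Omega}$ only helps, since it is nonnegative, so this bound suffices and the sharp constant plays no role.

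For the optimality part, fix $a\in\overline\Omega$ with $fX_2^{-1}X_1\to\infty$ at $a$. If $a\ne 0$, the weight $w_{21}$ is bounded near $a$ and $X_2X_1^{-1}$ is bounded there, so $f\to\infty$ near $a$. We then take standard Moser functions concentrated in small balls $B(a_k,\rho_k)\cap\Omega$ with $a_k\to a$ and unweighted energy $\le C$; on those balls $f\ge M_k\to\infty$, and the integral $\int e^{\alpha M_k |u_k|^{N'}}$ blows up for any $\alpha>0$.

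If $a=0$, we use radial test functions: $u_\delta$ equal to a constant $A_\delta$ on $B(0,\delta)$, vanishing outside $B(0,R_\Omega)$, and logarithmically interpolating with respect to the variable $s=\log\log(eR_\Omega/r)$, in which the radial weighted energy is $\omega_{N-1}\int|\partial_s u|^N ds$. This gives
\[
A_\delta^{N'} \approx c\,\log\log\frac{e R_\Omega}{\delta} = c\,X_2^{-1}(\delta)\ \text{(up to constants)}.
\]
Hence on $B(0,\delta)$ the exponent is $\alpha A_\delta^{N'} f \gtrsim \alpha\,(fX_2X_1^{-1})\,X_1^{-1}(\delta)$. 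Writing $fX_2X_1^{-1}\ge M(\delta)\to\infty$, the integral over $B(0,\delta)$ is at least
\[
\delta^N \exp\!\bigl(c\,\alpha M(\delta)\log(eR_\Omega/\delta)\bigr)\to\infty .
\]
This proves \eqref{equation non radial moser optimal}.
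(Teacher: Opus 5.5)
Your reduction is correct for $N=2$, where \eqref{ground state rep} is an exact identity; this is how the paper obtains \eqref{optimal embedding for dimension 2}. Your optimality argument is essentially the paper's, up to two minor points: the typo $fX_2X_1^{-1}$ should read $fX_2^{-1}X_1$, and you must interpolate inside a ball $B(0,\rho)\subset\Omega$ rather than out to $|x|=R_\Omega$.

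The first part, however, has a genuine gap for $N\ge 3$. The weighted Hardy inequality you invoke,
\[
\int_\Omega \frac{|v|^N}{|x|^N}\,X_1\,dx\le C\int_\Omega |\nabla v|^N X_1^{-(N-1)}\,dx ,
\]
is false. It is exactly the excluded case $\beta=1$ of Lemma \ref{psara}, where the constant $|N'/(1-\beta)|^N$ blows up. ``Critical'' here means precisely the borderline at which the Hardy constant is infinite. For an explicit counterexample, take $\Omega=\mathbb{B}_N$ and $t=\ln(e/r)$, and set $v=\ln t$ on $[1,T]$ and $v=2\ln T-\ln t$ on $[T,T^2]$. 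Then $\int|\nabla v|^NX_1^{-(N-1)}=2\omega_{N-1}\ln T$, while $\int|v|^N|x|^{-N}X_1\ge\omega_{N-1}(\ln T)^{N+1}/(N+1)$. Alternatively, by Theorem \ref{approximation} and Fatou the inequality would extend to all of $W^{1,N}_0(\Omega,w_{21})$. That space contains functions equal to a nonzero constant near $0$, whereas $|x|^{-N}X_1\notin L^1_{loc}$.

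A sharper expansion of $|a+b|^N$ cannot rescue the step, because the target estimate $I_{N,\Omega}[u]\le C\int|\nabla v|^NX_1^{-(N-1)}$ is itself false for $N\ge 3$. This is Lemma \ref{lerray fail}: the ground-state lower bound $I_{N,\Omega}[u]\gtrsim\int|x|^{2-N}|v|^{N-2}|\nabla v|^2X_1^{-1}$ grows like $(\ln\ln(e\delta/r_1))^{N-2}$ on log-log Moser functions whose $w_{21}$-energy stays bounded. So \eqref{equation non radial moser} cannot be deduced from \eqref{op-leray-trudinger} this way, as the paper notes after \eqref{leray vs weight moser}. Incidentally, $(|u|X_2^{1/N})^{N'}=|u|^{N'}X_2^{1/(N-1)}$, so your ``equals'' is only ``$\le$'' when $N>2$; that direction is harmless.

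The missing idea is that the Hardy term fails only on the radial component. Write $v=v_0+(v-v_0)$, where $v_0$ is the spherical mean.

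For $v-v_0$, the Poincar\'e inequality on $\mathbb{S}^{N-1}$ gives $\int|x|^{-N}|v-v_0|^NX_1^{-(N-1)}\lesssim\int|\nabla v|^NX_1^{-(N-1)}$, which is more than you need. So your triangle-inequality computation does go through for $(v-v_0)X_1^{-1/N'}$ (this is Lemma \ref{estimates of angular part}). The unweighted Trudinger bound then applies, using $X_2\le 1$.

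For $v_0$, you abandon Hardy altogether. Use the pointwise bound of Lemma \ref{radial lemma}(iii) together with $\|\nabla v_0\|_{N,w_{21}}\le\|\nabla v\|_{N,w_{21}}\le 1$. This gives $\alpha|v_0|^{N'}X_2X_1^{-1}\le\alpha\,\omega_{N-1}^{-1/(N-1)}\ln(e/|x|)$, which is integrable once $\alpha<N\omega_{N-1}^{1/(N-1)}$.

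Combining the two pieces is exactly the paper's proof; the paper does this through $L^q$ norms and by summing the exponential series.
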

 \par We first observe that \eqref{ruf2} yeilds the embedding of $W^{1, N}_{0, rad}\left(\mathbb{B}_N, w_{21}\right)$ into  $L^{\psi_{N1}}\left(\Omega\right)$, where the generalized Young's function $\psi_{N1}$ is defined by
$$\psi_{N1}(t) = \exp\left(N e^{t^{N'}} - N \right) - 1,\mbox{ for } t\geq 0.$$ Furthermore, Proposition \ref{M-O embedding} in Section \ref{prel} establishes that  $L^{\psi_{N1}}(\mathbb{B}_N)$ embeds into $L^{\phi_N}(\mathbb{B}_N)$. Since these two spaces do not coincide in general, Theorem \ref{non radial moser} does not extend the embedding given by \eqref{ruf2} in the non-radial setting.

\par Next we note that, Proposition $2.6$ in \cite{PsaradakisJFA} implies the following ineqaulity
\begin{equation}\label{leray vs weight moser}
\mathcal{I}_{N,\alpha} \leq \sup_{\left \lbrace u \in W^{1,N}_{0}\left(\Omega, w_{21}\right) :\left \lVert \nabla u \right\rVert_{N, w_{21}} \le 1\right\rbrace}  \fint_{\Omega} e^{\alpha\left(2^{N-1}-1\right)^{-\frac{1}{N-1}} \lvert u \rvert^{N'} X^{\frac{1}{N-1}}_2 X_1^{-1}} \, dx.
\end{equation}
As mentioned previously, equality in \eqref{leray vs weight moser} occurs if $N=2$.  However, for $N>2$, the quantities 
$$I_{N,\Omega}[\cdot] \mbox{ and }\left \lVert \nabla \left( X_{1}^{\frac{1}{N'}} \cdot \right)\right\rVert^{N}_{N, w_{21}, \Omega}$$ 
are not equivalent on the space $C_c^{\infty}(\Omega \setminus \{0\})$ (see \Cref{lerray fail} in \Cref{app A}). Consequently, equality is generally not expected in \eqref{leray vs weight moser}.  Indeed, \Cref{non radial moser}, particularly \eqref{equation non radial moser optimal}, shows that the RHS of \eqref{leray vs weight moser} is infinite for any $\alpha>0$. Thus, for $N>2$, \eqref{equation non radial moser} does not imply \eqref{op-leray-trudinger} and vice versa. 

 Our next result directly improves \eqref{moser-truding w21} in the space $W^{1,N}_{0, rad}\left(\mathbb{B}_{N}, w_{21}\right)$. 
\begin{proposition} \label{moser}
     Let $N\geq 2$. Then 
     \begin{equation} \label{equation sharp radial moser}
\mathcal{J}_{N, \alpha, w_{21},rad}:=  \sup_{\left \lbrace u \in W^{1,N}_{0,rad}\left(\mathbb{B}_{N}, w_{21}\right) :\left \lVert \nabla u \right\rVert_{N, w_{21}} \le 1\right\rbrace}  \fint_{\mathbb{B}_N} e^{\alpha \lvert u \rvert^{N'} X_2 X_1^{-1}} \, dx <\infty,
    \end{equation} iff $\alpha \leq \alpha_{N}$. Moreover, for $\alpha \leq \alpha_{N}$ the supremum is attained in $W^{1,N}_{0,rad}\left(\mathbb{B}_{N}, w_{21}\right)$.
\end{proposition}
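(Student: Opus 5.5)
The plan is to reduce, for radial functions, the weighted problem to a one-dimensional problem in which the weight $w_{21}$ disappears from the energy. For radial $u(x)=u(r)$, $r=\lvert x\rvert$, on $\mathbb{B}_N$ we have $R_\Omega=1$ and $w_{21}=\left(\log(e/r)\right)^{N-1}$. I would introduce the variable $t$ defined by $e^t=\log(e/r)$, so that $t\in[0,\infty)$ and $dt=-dr/(r\log(e/r))$, and set $v(t):=u(r)$. A direct computation gives
\begin{align*}
\left\lVert \nabla u\right\rVert_{N,w_{21}}^N=\omega_{N-1}\int_0^1\lvert u'\rvert^N\left(\log\tfrac{e}{r}\right)^{N-1}r^{N-1}\,dr=\omega_{N-1}\int_0^\infty \lvert v'(t)\rvert^N\,dt ,
\end{align*}
with $v(0)=0$ coming from the boundary condition at $r=1$. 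On the right-hand side, $X_2X_1^{-1}=e^t/(1+t)$ and $dx/\lvert\mathbb{B}_N\rvert=N r^{N-1}dr=N e^{N(1-e^t)}e^t\,dt$. Hence
\begin{align*}
\fint_{\mathbb{B}_N} e^{\alpha\lvert u\rvert^{N'}X_2X_1^{-1}}\,dx=N\int_0^\infty \exp\Bigl(\alpha\lvert v\rvert^{N'}\tfrac{e^t}{1+t}-Ne^t+N+t\Bigr)\,dt .
\end{align*}
By density, and by Theorem \ref{approximation} for the behaviour at the origin, it suffices to work with smooth radial $u$, that is, with $v$ locally absolutely continuous, $v(0)=0$, $\omega_{N-1}\int_0^\infty\lvert v'\rvert^N\,dt\le1$.

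\emph{Upper bound for $\alpha\le\alpha_N$.} Hölder's inequality gives $\lvert v(t)\rvert\le \omega_{N-1}^{-1/N}t^{1/N'}$. Since $\alpha_N\omega_{N-1}^{-1/(N-1)}=N$, this yields $\alpha\lvert v\rvert^{N'}\le Nt$. The exponent is then at most
\begin{align*}
Nt\,\tfrac{e^t}{1+t}-Ne^t+N+t=N+t-\tfrac{Ne^t}{1+t}.
\end{align*}
This function is integrable on $(0,\infty)$, so it gives a uniform bound and proves \eqref{equation sharp radial moser}. The case $\alpha<\alpha_N$ follows by monotonicity in $\alpha$.

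\emph{Attainment.} Take a maximizing sequence $v_k$. Its derivatives are bounded in $L^N(0,\infty)$, and the Hölder estimate makes the family equicontinuous with $v_k(0)=0$. By Arzelà--Ascoli on compacts together with weak compactness, a subsequence converges locally uniformly to some $v$ with $\omega_{N-1}\int\lvert v'\rvert^N\le1$, by weak lower semicontinuity of the $L^N$ norm. The integrands are dominated by the integrable function $N\exp\bigl(N+t-Ne^t/(1+t)\bigr)$, so dominated convergence shows that $v$, equivalently the corresponding radial $u$, attains the supremum. This argument works uniformly for every $\alpha\le\alpha_N$.

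\emph{Sharpness for $\alpha>\alpha_N$.} Write $\alpha=\alpha_N(1+\delta)$ and use Moser-type functions: $v_T(t)=\omega_{N-1}^{-1/N}T^{-1/N}\min(t,T)$, which have energy $1$. For $t\ge T$ we get $\alpha\lvert v_T\rvert^{N'}=N(1+\delta)T$, so the exponent is at least
\begin{align*}
Ne^t\Bigl(\tfrac{(1+\delta)T}{1+t}-1\Bigr)+t .
\end{align*}
On the interval $t\in[T,(1+\delta)T-1-\delta T/2]$ this is at least $cNe^T$ for some $c=c(\delta)>0$, and the interval has length of order $\delta T$. The integral therefore tends to $\infty$ as $T\to\infty$. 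I expect the main obstacle to be the careful verification of the change of variables, namely that the weight exactly cancels the Jacobian. Once that is confirmed, the remaining steps are essentially one-dimensional estimates.
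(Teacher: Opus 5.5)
Your proof is correct and is essentially the paper's argument written in the variable $t=\ln\ln(e/r)$. Your H\"older bound is Lemma \ref{radial lemma}(iii), your integrability check is the paper's computation with $z=e^t$, and your $v_T$ is exactly the paper's $\eta_{r_1}$ with $T=\ln\ln(e/r_1)$; your Arzel\`a--Ascoli plus dominated-convergence argument (with dominating function $N\exp\bigl(N+t-Ne^t/(1+t)\bigr)$) just supplies the attainment details that the paper skips.
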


In the unweighted case, the corresponding version of \Cref{moser} implies \eqref{moser-trudinger} via the P\'olya-Szeg\"o inequality. However, with the weight $w_{21}$, such an inequality does not hold (see \Cref{pz fails} in \Cref{app B}). Thus the sharp version of \eqref{equation non radial moser} still remains an open question. 

\par We now observe that, \Cref{radial lemma}(iii) of Section \ref{prel} is valid with the weight $Y_2^{-1}(x): = \ln \ln \left(e/\lvert x \rvert\right)$, which immediately implies that \eqref{equation sharp radial moser} holds for $\alpha<\alpha_{N}$ if we replace $X_2$ by $Y_2$ in the exponential. Now, even though  $X_2$ and $Y_2$ behaves similarly near the origin, their behaviour changes drastically near the boundary of $\mathbb{B}_N$. This distinction is manifested in our next theorem. 
\begin{theorem} \label{weak moser}
     Let $N\geq 2$. Then 
     \begin{equation} \label{weak thor}
\sup_{\left \lbrace u \in W^{1,N}_{0,rad}\left(\mathbb{B}_{N}, w_{21}\right) :\left \lVert \nabla u \right\rVert_{N, w_{21}} \le 1\right\rbrace}  \int_{\mathbb{B}_N} e^{\alpha \lvert u \rvert^{N'} Y_2 X_1^{-1}} \, dx <\infty,
    \end{equation} iff $\alpha <\alpha_{N}$.  Moreover, for any $\alpha, \gamma > 0$ we have,
   
    \begin{equation}\label{moser failure}
\sup_{\left \lbrace u \in W^{1,N}_{0}\left(\mathbb{B}_{N}, w_{21}\right) :\left \lVert \nabla u \right\rVert_{N, w_{21}} \le 1\right\rbrace } \int_{\mathbb{B}_{N}} e^{\alpha \lvert u \rvert^{N'} Y_2^{\gamma}} \, dx = \infty.
    \end{equation}
\end{theorem}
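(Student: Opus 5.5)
The plan is to work in the variables that turn the weighted radial energy into an unweighted one‑dimensional Dirichlet integral. For radial $u$ on $\mathbb{B}_N$ we set $s=\ln(e/r)\in[1,\infty)$ and $\sigma=\ln s=Y_2^{-1}(r)\in[0,\infty)$. Since $w_{21}=s^{N-1}$, $r^{N-1}dr=-e^{N}e^{-Ns}ds$ (only the absolute value matters) and $ds=s\,d\sigma$, we get
\begin{align*}
\lVert \nabla u\rVert_{N,w_{21}}^N=\omega_{N-1}\int_1^\infty |u_s|^N s^{N-1}\,ds=\omega_{N-1}\int_0^\infty |u_\sigma|^N\,d\sigma ,
\end{align*}
with $u=0$ at $\sigma=0$. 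Hölder's inequality (this is the content of \Cref{radial lemma}(iii)) then gives $\omega_{N-1}^{\frac{1}{N-1}}|u(\sigma)|^{N'}\le \sigma\,\bigl(\omega_{N-1}\int_0^\sigma|u_\sigma|^N\bigr)^{\frac{1}{N-1}}\le\sigma$. Since $\alpha_N=N\omega_{N-1}^{\frac1{N-1}}$, $Y_2=\sigma^{-1}$ and $X_1^{-1}=s$, this yields the pointwise bound
\begin{align*}
\alpha\,|u|^{N'}Y_2X_1^{-1}\le \frac{\alpha}{\alpha_N}\,N\ln\frac{e}{r}.
\end{align*}

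\emph{Sufficiency for $\alpha<\alpha_N$.} We split $\mathbb{B}_N$ into $B(0,\delta)$ and $\mathbb{B}_N\setminus B(0,\delta)$.
\begin{itemize}
\item On the annulus, the pointwise bound gives $e^{\alpha|u|^{N'}Y_2X_1^{-1}}\le (e/\delta)^{N\alpha/\alpha_N}$. This is bounded even near $|x|=1$, where $Y_2\to\infty$, because the bound no longer involves $Y_2$.
\item Near the origin, $Y_2/X_2=(1+\sigma)/\sigma\to1$. Choosing $\delta$ small so that $Y_2\le(1+\eta)X_2$ on $B(0,\delta)$, with $(1+\eta)\alpha\le\alpha_N$, we can invoke \Cref{moser}.
\end{itemize}
Alternatively the pointwise bound alone suffices, since $\int_{\mathbb{B}_N}(e/r)^{N\alpha/\alpha_N}\,dx<\infty$ for $\alpha<\alpha_N$.

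\emph{Failure for $\alpha\ge\alpha_N$.} It suffices to treat $\alpha=\alpha_N$. We use Moser‑type test functions in the $\sigma$ variable:
\begin{align*}
u_{\sigma_0}(\sigma)=\omega_{N-1}^{-1/N}\sigma_0^{-1/N}\min\{\sigma,\sigma_0\}.
\end{align*}
These have weighted energy exactly $1$ and satisfy $\omega_{N-1}^{\frac1{N-1}}|u_{\sigma_0}|^{N'}=\sigma_0$ for $\sigma\ge\sigma_0$. On $\{s\ge e^{\sigma_0}\}$ the integrand is $\exp\bigl(Ns\,\sigma_0/\ln s\bigr)$ and the measure is $\omega_{N-1}e^{N}e^{-Ns}ds$, so the integral is bounded below by a constant times
\begin{align*}
\int_{e^{\sigma_0}}^{\infty}\exp\Bigl(Ns\Bigl(\frac{\sigma_0}{\ln s}-1\Bigr)\Bigr)ds .
\end{align*}
Writing $s=e^{\sigma_0}(1+\tau)$, the exponent is $\approx -Ne^{\sigma_0}\tau/\sigma_0$ for small $\tau$. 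Hence this integral behaves like $\sigma_0/N\to\infty$ as $\sigma_0\to\infty$, which gives the failure.

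The main obstacle is making this Laplace‑type estimate rigorous. We need a lower bound for $\frac{\sigma_0}{\ln s}-1\ge-\frac{\tau}{\sigma_0}$ on a window $\tau\in[0,C\sigma_0e^{-\sigma_0}]$, where the exponent stays bounded below by $-CN$; that window alone contributes $\gtrsim e^{\sigma_0}\cdot\sigma_0e^{-\sigma_0}=\sigma_0$. The estimate should follow from $\ln(1+\tau)\le\tau$.

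\emph{Non‑radial failure \eqref{moser failure}.} We plan to apply the optimality part of \Cref{non radial moser}, namely \eqref{equation non radial moser optimal}, with $f=Y_2^\gamma$ and $a\in\partial\mathbb{B}_N$. As $x\to a$ we have $X_1,X_2\to1$ while $Y_2\to\infty$, so $fX_2^{-1}X_1\to\infty$ and the hypothesis holds; the ratio between $\int$ and $\fint$ is harmless. As a self‑contained check, one can place unweighted Moser functions on $B(a_\rho,\rho)$ with $|a_\rho|=1-2\rho$. There $w_{21}\le(1+C\rho)^{N-1}$, so the energy is $\le1+o(1)$. Also $Y_2\gtrsim\rho^{-1}$ on that ball, so on the inner ball of radius $\rho e^{-k/N}$ the integrand is $\ge e^{c\alpha\rho^{-\gamma}k/\alpha_N}$. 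Taking $\rho$ small, so that $c\alpha\rho^{-\gamma}/\alpha_N>1$, and then letting $k\to\infty$, the integral tends to infinity.
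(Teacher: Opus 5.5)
Your proposal is correct and follows the paper's proof. Sufficiency for $\alpha<\alpha_N$ is the pointwise bound of \Cref{radial lemma}(iii); failure for $\alpha\ge\alpha_N$ uses exactly the test functions $\eta_{r_1}$ of \eqref{radial ltm function} (your $u_{\sigma_0}$ with $\sigma_0=\ln\ln(e/r_1)$), giving the same integral $\int_{e^{\sigma_0}}^{\infty}\exp\bigl(Ns(\sigma_0/\ln s-1)\bigr)\,ds$; and \eqref{moser failure} follows, as the paper remarks, from \eqref{equation non radial moser optimal} at a point $a\in\partial\mathbb{B}_N$. The only difference is that you bound that integral from below on the window $\tau\in[0,C\sigma_0e^{-\sigma_0}]$, where, using $1+\tau\le 2$, the exponent is at least $-2CN$ rather than $-CN$. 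The paper instead reaches the same $\gtrsim\sigma_0$ lower bound through the substitutions $v=\ln z$, $w=v-v_1$, $Y=Ne^{v_1+w}w/v_1$.
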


We remark that, \eqref{moser failure} is a consequence of \eqref{equation non radial moser optimal} in Theorem \ref{non radial moser} with $f= Y_2X_{1}^{-1}$. However, in this special case we prove a stronger version of \eqref{moser failure}, which shows that one cannot replace $X_2$ by $Y_2$ in the optimal version of the Leray-Trudinger inequality \eqref{op-leray-trudinger}.
\begin{proposition} \label{lerray weak}
     For any $\alpha, \gamma > 0$ we have,
    \begin{equation}\label{lerray weak fail}
         \sup_{\left \lbrace u \in W^{1,N}_{0}\left(\mathbb{B}_{N}\right) : I_{N,\mathbb{B}_{N}}\left[u\right] \le 1\right\rbrace} \int_{B} e^{\alpha \lvert u \rvert^{N'}  Y_2^{\gamma}} \, dx = \infty.
    \end{equation}
\end{proposition}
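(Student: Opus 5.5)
The idea is that the blow-up in \eqref{lerray weak fail} does not come from the origin but from the boundary of $\mathbb{B}_N$. There $Y_2$ is unbounded while the Hardy--Leray potential stays bounded, so the Leray energy cannot stop concentration there. Recall $Y_2^{-1}(x)=\ln\ln(e/|x|)$. As $|x|\to 1^-$ we have $\ln(e/|x|)=1-\ln|x|\to 1$, hence $\ln\ln(e/|x|)\to 0^+$ and $Y_2(x)\to+\infty$. So I fix $\alpha,\gamma>0$ and choose $M>0$ with $\alpha M^{\gamma}>\alpha_N$. Then I pick $\delta\in(0,1/2)$ such that $Y_2(x)\ge M$ for all $1-2\delta\le |x|<1$. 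Finally I fix a point $x_0$ with $|x_0|=1-\delta$ and the radius $\rho=\delta/2$, so that the ball $B(x_0,\rho)\subset\{1-2\delta\le|x|<1\}\cap\mathbb{B}_N$.

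On this ball I use the standard Moser sequence centred at $x_0$:
\begin{align*}
u_k(x)=\omega_{N-1}^{-1/N}
\begin{cases}
(\log k)^{\frac{N-1}{N}}, & |x-x_0|\le \rho/k,\\
\log\big(\rho/|x-x_0|\big)(\log k)^{-\frac1N}, & \rho/k\le|x-x_0|\le\rho,\\
0, & \text{otherwise}.
\end{cases}
\end{align*}
These functions lie in $W^{1,N}_0(\mathbb{B}_N)$ and satisfy $\int|\nabla u_k|^N\,dx=1$. Since the Leray term in $I_{N,\mathbb{B}_N}$ is subtracted and nonnegative, $I_{N,\mathbb{B}_N}[u_k]\le \lVert\nabla u_k\rVert_N^N=1$, so every $u_k$ is admissible. (Boundedness of $|x|^{-N}X_1^N$ near $\partial\mathbb{B}_N$ is not even needed.)

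It remains to estimate the integral from below. On $B(x_0,\rho/k)$ we have $Y_2^\gamma\ge M^\gamma$ and $|u_k|^{N'}=\omega_{N-1}^{-1/(N-1)}\log k$. Since $\alpha_N\,\omega_{N-1}^{-1/(N-1)}=N$, we get
\begin{align*}
\int_{\mathbb{B}_N} e^{\alpha|u_k|^{N'}Y_2^\gamma}\,dx\ \ge\ |B(x_0,\rho/k)|\, e^{\alpha M^\gamma\omega_{N-1}^{-1/(N-1)}\log k}
= c_N\rho^N\, k^{\,N\left(\alpha M^\gamma/\alpha_N-1\right)}.
\end{align*}
The exponent is positive because $\alpha M^\gamma>\alpha_N$, so the right-hand side tends to $\infty$ as $k\to\infty$. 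This proves \eqref{lerray weak fail}.

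There is no serious obstacle. The only point requiring care is the choice of the concentration point: it must be near $\partial\mathbb{B}_N$, not at the origin where the Leray potential is singular. With that choice the Leray energy is dominated by the plain Dirichlet energy, and the unweighted Moser sequence already exceeds the critical exponent $\alpha_N$. The same construction, with $\lVert\nabla u_k\rVert_{N,w_{21}}$ comparable to $\lVert\nabla u_k\rVert_N$ near the boundary since $w_{21}$ is bounded there, also gives \eqref{moser failure}.
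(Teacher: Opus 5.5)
Your proposal is correct and follows essentially the same route as the paper. Both use Moser bubbles concentrating at a point near $\partial\mathbb{B}_N$, where $Y_2^{\gamma}$ is large, bound the Leray energy by the Dirichlet energy $\lVert\nabla u\rVert_N^N=1$, and estimate the integral from below on the inner ball; the only differences are cosmetic (the paper uses the internally tangent ball $B(x_p,p)$ with $x_p=(1-p,0,\dots,0)$ and tunes $p$ directly instead of introducing $M$ and $\delta$).
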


Note that in the radial case we can find $\alpha>0$ such that \eqref{lerray weak fail} holds  with $\gamma = 1/(N-1)$. More precisely,  there exists a positive constant $\alpha \equiv \alpha(N)$ such that
    \begin{align}
        \sup_{\left \lbrace u \in C^1_{c,rad}\left(\mathbb{B}_N \right) : I_{N,\mathbb{B}_N}[u] \le 1 \right \rbrace} \int_{\mathbb{B}_N} e^{\alpha \lvert u \rvert^{N'}Y^{\frac{1}{N-1}}_2} \, dx < \infty.
    \end{align}
    This follows from the proof of \cite[Theorem 3.5]{di2024optimal}.
\par{\bf{Main results for $\beta \neq 1$}}:

In the spirit of the $\beta=1$ case, we now treat the $\beta \neq 1$ case. First we focus on $0<\beta <1$. Our first result here addresses the embedding of the entire space $W^{1,N}_{0}\left(\mathbb{B}_{N}, w_{1\beta}\right)$.
\begin{theorem} \label{moser weighted1}
   Let $0< \beta < 1$, $N\geq2$ and $\Omega$ be any bounded domain in $\mathbb{R}^N$ containing the origin. Then, $ W^{1,N}_{0}(\Omega, w_{1\beta}) \not \hookrightarrow L^{\psi_N}\left(\Omega\right)$, where $\psi_{N}(t)= \exp(t^{N'})-1$, for $t\geq0$. However, for all $\alpha \le N \omega_{N-1}^{\frac{1}{N-1}}(1-\beta)^{N'}$, we have
    \begin{align} \label{moser trudinger weight}
    \mathcal{J}_{N, \alpha, w_{1\beta}}:=  \sup_{\left \lbrace u \in W^{1,N}_{0}\left(\Omega, w_{1\beta}\right) :\left \lVert \nabla u \right\rVert_{N, w_{1\beta}} \le 1\right\rbrace}  \fint_{\Omega} e^{\alpha \lvert u \rvert^{N'} Y_1^{-\beta}} \, dx <\infty.
    \end{align}
Also the weight $Y_{1}^{-\beta}$, is optimal in the sense that if  $f: \Omega \to [0,\infty]$ is any measurable function, satisfying $f(x) Y_1^{\beta} \to \infty$ or $f(x) \to \infty$ as $x \to a$ for some $a \in \Omega$ or $a \in \partial \Omega$, respectively, then $ W^{1,N}_{0}(\Omega, w_{1\beta}) \not \hookrightarrow L^{\phi}\left(\Omega\right)$, where $\phi(x,t) = e^{t^{N'}f(x)}-1$, for $x\in \Omega$ and $t\geq 0$. In other words,
  \begin{equation} \label{equation non radial moser optimal beta}
 \sup_{\left \lbrace u \in W^{1,N}_{0}\left(\Omega, w_{1\beta}\right) :\left \lVert \nabla u \right\rVert_{N, w_{1\beta}} \le 1\right\rbrace}  \fint_{\Omega} e^{\alpha \lvert u \rvert^{N'} f} \, dx = \infty, \mbox{ for any } \alpha>0.
    \end{equation}
\end{theorem}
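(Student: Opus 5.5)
The plan has three parts: the non-embedding into $L^{\psi_N}$, the bound \eqref{moser trudinger weight}, and the optimality statement \eqref{equation non radial moser optimal beta}. We normalize $R_\Omega=1$ by scaling; the ratio $|x|/R_\Omega$ makes everything scale invariant. For the upper bound we reduce to the unweighted Moser inequality \eqref{moser-trudinger} by a change of variables. The Muckenhoupt property \eqref{A-N prop} ($w_{1\beta}\in A_N$ for $0<\beta<1$) and the approximation result (Theorem \ref{approximation}) let us work with $u\in C^1_c(\Omega\setminus\{0\})$.

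For radial $u$ on the ball, write $t=\ln(1/r)$ and introduce the new radial variable $s=s(t)$ with $ds/dt = c\,t^{\beta}$, i.e.\ $s=\frac{t^{1+\beta}}{1+\beta}$ up to normalization. Then
\[
\int|\nabla u|^N w_{1\beta}\,dx=\omega_{N-1}\int_0^\infty |u_t|^N t^{\beta(N-1)}\,dt,
\]
and choosing the exponent so that this becomes $\omega_{N-1}\int |v_s|^N ds$ turns the energy into the unweighted radial Dirichlet energy. However, the measure $e^{-Nt}dt$ becomes $e^{-N t(s)}$, and the exponent $\alpha |u|^{N'}t^{\beta}$ has to be compared with $Ns$. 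This is essentially the Calanchi--Ruf radial computation. Instead, for non-radial $u$, I would use the pointwise one-dimensional estimate along rays together with a Hölder argument. By Hölder with weight, for $|x|=r$,
\[
|u(x)|\le \Big(\int_r^1 |\partial_\rho u|^N \rho^{N-1}\ln(1/\rho)^{\beta(N-1)}d\rho\Big)^{1/N}\Big(\int_r^1 \frac{d\rho}{\rho\,\ln(1/\rho)^{\beta}}\Big)^{1/N'},
\]
and the second factor equals $\big(\ln(1/r)^{1-\beta}/(1-\beta)\big)^{1/N'}$. Hence $|u|^{N'}Y_1^{-\beta}\lesssim (1-\beta)^{-1}\ln(1/r)\cdot(\text{ray energy})^{1/(N-1)}$, which explains the constant $(1-\beta)^{N'}$. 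To obtain the sharp constant for non-radial functions, I would instead set $v(x)=u(x)\,Y_1^{-\beta/N'}$ or, more cleanly, apply an O'Neil/Adams-type argument. Concretely, write $u$ via the representation $|u(x)|\lesssim\int|\nabla u(y)||x-y|^{1-N}dy$ and apply Adams' lemma (the Garsia-type rearrangement lemma behind \eqref{moser-trudinger}) to $g=|\nabla u|w_{1\beta}^{1/N}$, with kernel $|x-y|^{1-N}w_{1\beta}^{-1/N}(y)$. The weighted kernel's rearrangement is estimated using $Y_1^{\beta(N-1)/N}$, giving the factor $(1-\beta)$ after integrating $\int ds/(s\ln^{\beta}(1/s))$. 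Sharpness of the constant is not claimed, so a constant of the form $\alpha\le\alpha_N(1-\beta)^{N'}$ requires the sharp Adams step. I expect this to be the main obstacle: the kernel depends on $y$ near the origin, so it is not a convolution, and one must split into $|y|<|x|/2$, $|y-x|<|x|/2$ and the remaining region, showing the off-diagonal pieces are bounded and the diagonal piece gives exactly the weight $Y_1^{-\beta}(x)$ at the sharp constant.

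For the negative parts we use explicit test functions. For $W^{1,N}_0\not\hookrightarrow L^{\psi_N}$, take Moser functions concentrated at the origin: $u_k=\ln(1/r)^{1-\beta}$ truncated between $e^{-k}$ and $1$, suitably normalized. The energy is $\sim\int_0^k t^{-\beta N}t^{\beta(N-1)}dt\sim k^{1-\beta}$, while $u_k$ is of order $k^{1-\beta}$ on $B_{e^{-k}}$. Thus $|u_k|^{N'}/\|\nabla u_k\|^{N'}\sim k^{(1-\beta)(1-1/(N-1))\cdot\ldots}$; after adjusting the exponent choice, $\exp(\alpha|u_k|^{N'})\,|B_{e^{-k}}|$ grows like $e^{c k^{\beta'}k}e^{-Nk}$ with superlinear growth, which gives divergence for every $\alpha$. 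For $f$ with $fY_1^\beta\to\infty$ at an interior or origin point $a$, we center Moser functions at $a$. At $a\neq0$ the weight is bounded above and below near $a$, so standard Moser functions have norm comparable to their unweighted norm, and $f\to\infty$ beats any fixed $\alpha$; the same works at boundary points, which is why only $f\to\infty$ is needed there. At $a=0$ we use the radial extremal-type profiles above, where $f$ exceeding $Y_1^{-\beta}$ by an unbounded factor defeats any $\alpha$.
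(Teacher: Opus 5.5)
\noindent The optimality argument for $f$ is essentially the paper's, once the radial profile at $0$ is placed inside a ball $B(0,\delta)\subset\Omega$ (e.g.\ via $\ln(e\delta/r)$), with $f\ge M\,Y_1^{-\beta}$ near $0$ and $M$ large. The other two parts, however, each have a genuine gap.

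\noindent\textbf{Non-embedding into $L^{\psi_N}$.} Concentration at the origin cannot produce the non-embedding, because there the weight is large, not small. Indeed $w_{1\beta}=(\ln(R_\Omega/\lvert x\rvert))^{\beta(N-1)}\ge 1$ on $B(0,R_\Omega/e)$. Your own family shows this: after normalization, $\lvert v_k\rvert^{N'}=k^{1-\beta}/(\omega_{N-1}^{1/(N-1)}(1-\beta))$ on $B(0,e^{-k})$. Hence $e^{\alpha\lvert v_k\rvert^{N'}}\lvert B(0,e^{-k})\rvert=\frac{\omega_{N-1}}{N}\exp\bigl(c\alpha k^{1-\beta}-Nk\bigr)\to 0$. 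The growth is sublinear in $k$, not superlinear. More generally, Lemma \ref{radial lemma}(i) gives $\alpha\lvert u\rvert^{N'}\le C\alpha(\ln(1/\lvert x\rvert))^{1-\beta}$ for every radial $u$ in the unit ball, so no radial family breaks the embedding.

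The missing idea is that $w_{1\beta}$ \emph{degenerates at the outer boundary}. Pick $a\in\partial\Omega$ with $\lvert a\rvert=R_\Omega$, so that $w_{1\beta}(x)\to 0$ as $x\to a$. Choose $x_p\to a$ and $\delta_p\to0$ with $B(x_p,\delta_p)\subset\Omega$. Let $v$ be the Moser-type function equal to $((N+1)/\alpha)^{1/N'}(\ln n)^{1/N'}$ on $B(x_p,\delta_p/n)$, with the usual logarithmic profile down to $0$ on $\partial B(x_p,\delta_p)$. Its weighted energy is at most $((N+1)/\alpha)^{N-1}\omega_{N-1}\sup_{B(x_p,\delta_p)}w_{1\beta}$, which is $\le 1$ for a fixed large $p$ despite the large prefactor. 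Then $\int_\Omega e^{\alpha\lvert v\rvert^{N'}}\,dx\ge n^{N+1}\lvert B(x_p,\delta_p/n)\rvert\to\infty$ as $n\to\infty$, for every $\alpha>0$. This is the paper's argument.

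\noindent\textbf{The bound \eqref{moser trudinger weight}.} Here you do not actually supply a proof. The ray-wise H\"older estimate involves the energy along a single ray, which $\lVert\nabla u\rVert_{N,w_{1\beta}}$ does not control for non-radial $u$. Even for radial $u$ it points to the threshold $\alpha_N(1-\beta)$, not $\alpha_N(1-\beta)^{N'}$. The Adams-type argument with a non-convolution kernel is left as an unresolved ``main obstacle''.

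None of this is needed: the substitution you mention and then drop finishes the proof directly. By density (Theorem \ref{approximation}) and Fatou's lemma it suffices to take $u\in C^1_c(\Omega\setminus\{0\})$. Then $v=u\,Y_1^{-\beta/N'}\in C^1_c(\Omega\setminus\{0\})$ and
\[
\nabla v=Y_1^{-\beta/N'}\nabla u-\frac{\beta}{N'}\,Y_1^{1-\beta/N'}\,u\,\frac{x}{\lvert x\rvert^{2}} .
\]
Since $Y_1^{N-\beta(N-1)}=w_{1\beta}^{1-N'/\beta}$, Minkowski's inequality and the Hardy inequality of Lemma \ref{psara} give
\[
\lVert\nabla v\rVert_N\le 1+\frac{\beta}{N'}\cdot\frac{N'}{1-\beta}=\frac{1}{1-\beta}.
\]
Now apply the unweighted Moser--Trudinger inequality to $(1-\beta)v$ and use $\lvert v\rvert^{N'}=\lvert u\rvert^{N'}Y_1^{-\beta}$. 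This yields finiteness exactly for $\alpha\le\alpha_N(1-\beta)^{N'}$. The factor $(1-\beta)^{N'}$ comes from the Hardy constant; no sharp Adams-type lemma is involved.
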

Note that, the conclusion $ W^{1,N}_{0}(\Omega, w_{1\beta}) \not \hookrightarrow L^{\psi_N}\left(\Omega\right)$ implies 
    \begin{align} \label{unboundedness of log}
        \sup_{\left \lbrace u \in W^{1,N}_{0}\left(\Omega, w_{1\beta}\right) :\left \lVert \nabla u \right\rVert_{N, w_{1\beta}} \le 1\right\rbrace} \int_{\Omega} e^{\alpha \lvert u \rvert ^{N'}} \, dx = \infty, 
    \end{align}
for any $\alpha>0$. This improves \eqref{ruf calanchi sup infinite} in the case $i=1$. Next we deal with the weight $w_{2\beta}$. Here, the embedding $W^{1,N}_{0}(\Omega, w_{2\beta}) \hookrightarrow L^{\psi_N}\left(\Omega\right)$ follows from \eqref{moser-trudinger}, since the both the weights $w_{2\beta}$ and $X_1^{-\beta}$ are greater than equal to 1. 
\begin{theorem} \label{moser weighted1/2}
    Let $0\le \beta < 1$, $N\geq2$ and $\Omega$ be any bounded domain in $\mathbb{R}^N$ containing the origin. Then, for all $\alpha \le N \omega_{N-1}^{\frac{1}{N-1}}(1-\beta)^{N'}$, we have
    \begin{align} \label{moser trudinger weight X}
    \mathcal{J}_{N, \alpha, w_{2\beta}}:=  \sup_{\left \lbrace u \in W^{1,N}_{0}\left(\Omega, w_{2\beta}\right) :\left \lVert \nabla u \right\rVert_{N, w_{2\beta}} \le 1\right\rbrace}  \fint_{\Omega} e^{\alpha \lvert u \rvert^{N'} X_1^{-\beta}} \, dx <\infty.
    \end{align} 
Also the weight $X_{1}^{-\beta}$, is optimal in the sense that if $f: \Omega \to [0,\infty]$ is any measurable function satisfying $f(x) X_1^{\beta} \to \infty$ as $x \to a$, for some $a \in \overline{\Omega}$, then $ W^{1,N}_{0}\left(\Omega, w_{2\beta}\right) \not \hookrightarrow L^{\phi}\left(\Omega\right)$, where $\phi(x,t) = e^{t^{N'}f(x)}-1$, for $x\in \Omega$ and $t\geq0$.
\end{theorem}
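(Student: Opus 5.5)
The plan is to reduce \eqref{moser trudinger weight X} to the classical inequality \eqref{moser-trudinger} by a change of the unknown. The reduction should rest on a pointwise inequality comparing $\lvert \nabla u\rvert^N w_{2\beta}$ with $\lvert \nabla v\rvert^N$ for a suitable transform $v$ of $u$. First, by density (Theorem \ref{approximation}) and scaling $x\mapsto x/R_\Omega$, we may take $u\in C_c^1(\Omega\setminus\{0\})$ and $R_\Omega=1$. We may also replace $u$ by $\lvert u\rvert$.

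The transform is a power-type map adapted to the weight. Write $u = \lvert v\rvert^{1/(1-\beta)}\operatorname{sgn}$, or more precisely set
\[
v:=(1-\beta)^{-1}\,\cdots
\]
This naive version does not work, because the weight depends on $x$ and not on $u$. The right device is the radial one-dimensional picture. Set $t=\log(e/\lvert x\rvert)=X_1^{-1}\ge 1$, so that $w_{2\beta}=t^{\beta(N-1)}$. Consider the change of variables $s=t^{1-\beta}/(1-\beta)$ on the radial variable, i.e. the map $T:x\mapsto \rho(\lvert x\rvert)\,x/\lvert x\rvert$ with $\log(e/\rho)=t^{1-\beta}$. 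Put $v=u\circ T^{-1}$.

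Next compute the energies under $T$. In polar coordinates, $dx = \omega\, r^{N-1}dr$ with $r^{N}=e^{N(1-t)}$, and the radial derivative transforms by $\partial_t$. The tangential gradient picks up the factor $r/\rho$. One checks that
\[
\int \lvert\nabla u\rvert^N w_{2\beta}\,dx \ \ge\ (1-\beta)^{N-1}\int \lvert\nabla v\rvert^N\,dy
\]
holds up to controlling the ratio of tangential factors. In the $t$-variable the radial part becomes $\int \lvert\partial_t u\rvert^N t^{\beta(N-1)}dt$, and with $d\tau = t^{-\beta}dt$ (so $\tau=t^{1-\beta}/(1-\beta)$) this equals $\int\lvert\partial_\tau u\rvert^N d\tau$. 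The angular part involves $\lvert\nabla_\theta u\rvert^N t^{\beta(N-1)}\,dt$ versus $\lvert\nabla_\theta u\rvert^N\,t^{\beta}d\tau$-type factors, which is favourable since $t\ge1$. The constant bookkeeping (including $\tau$ versus $\log(e/\rho)=(1-\beta)\tau$) yields the factor $(1-\beta)^{N'}$ in the admissible $\alpha$.

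Then apply \eqref{moser-trudinger} to $v$ on $T(\Omega)\subset\mathbb{B}_N$ with normalized norm. This gives
\[
\int e^{N\omega_{N-1}^{1/(N-1)}\lvert v\rvert^{N'}}dy\le C.
\]
Pull back by $T$: the Jacobian is $dy/dx = (\rho/r)^N\,t^{-\beta}$, which equals $e^{N(t-t^{1-\beta})}t^{-\beta}$. This produces $\int e^{\alpha u^{N'}X_1^{-\beta}}\,e^{-N(t-t^{1-\beta})}\dots$, so a naive pullback gains the wrong sign.

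I expect this Jacobian to be the main obstacle. The honest route is probably instead to split $u$ using the one-dimensional Moser lemma (Lemma of Moser/Adams type, as used with \Cref{radial lemma}). I would combine this with a Hölder/level-set argument: on $\{\lvert x\rvert<\delta\}$ one uses the pointwise radial-type bound $u^{N'}X_1^{-\beta}\lesssim$ energy$\cdot$ $t$, and away from the origin one uses the classical inequality since the weight is bounded there. I would first try symmetrization in the weight-free variables followed by Moser's lemma in $\tau$.

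For optimality, take Moser functions concentrated at $a$. If $a\neq0$ the weight is bounded near $a$, and the standard Moser sequence at scale $\varepsilon$ shows that $\int e^{\alpha f u^{N'}}$ blows up when $f\to\infty$ at $a$. If $a=0$, use $u_k=\min\{k,\, t^{1-\beta}\}$ suitably normalized, i.e. the log-power profile. Its energy is $\sim k^{N'/(N')}\dots$, and one gets $u_k^{N'}X_1^{-\beta}\sim c\,t$ on $\{t>k^{1/(1-\beta)}\}$; then $fX_1^\beta\to\infty$ forces divergence.
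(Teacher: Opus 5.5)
There is a genuine gap: you never establish \eqref{moser trudinger weight X}. Your map $T$ is a reasonable first attempt. The energy comparison $\int\lvert\nabla v\rvert^N\,dy\le(1-\beta)^{1-N}\int\lvert\nabla u\rvert^N w_{2\beta}\,dx$ does hold, tangential part included. But, as you noticed, pulling back the Moser--Trudinger integral only controls $e^{\alpha_N(1-\beta)\lvert u\rvert^{N'}}$ against the density $(1-\beta)t^{-\beta}e^{N(t-t^{1-\beta})}$. Without pointwise information on $u$, this says nothing about $e^{\alpha\lvert u\rvert^{N'}t^{\beta}}$.

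The fallback you sketch does not close the gap either:
\begin{itemize}
\item A bound $u^{N'}X_1^{-\beta}\lesssim t$ near the origin is available only for radial functions (\Cref{radial lemma}).
\item Symmetrizing only in the angular variables keeps the weight but leaves a non-radial function, so no one-dimensional Moser lemma in $\tau$ applies afterwards.
\item Full Schwarz symmetrization is ruled out because the weighted P\'olya--Szeg\"o inequality fails (\Cref{pz fails}).
\item Gluing a near-origin estimate to the classical inequality away from the origin with cut-offs loses constants. It cannot be expected to reach the stated threshold $N\omega_{N-1}^{\frac{1}{N-1}}(1-\beta)^{N'}$.
\end{itemize}

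The missing idea is to multiply by a power of the weight rather than to compose with a change of variables. Take $u\in C^1_c(\Omega\setminus\{0\})$ with $\lVert\nabla u\rVert_{N,w_{2\beta}}\le1$; such functions are dense by Theorem \ref{approximation}, since $i=2$ and $0<\beta<1$. Set $v\coloneqq uX_1^{-\beta/N'}=u\,w_{2\beta}^{1/N}$. Then $\lvert v\rvert^{N'}=\lvert u\rvert^{N'}X_1^{-\beta}$ exactly, and
\[
\nabla v=X_1^{-\frac{\beta}{N'}}\nabla u-\frac{\beta}{N'}X_1^{1-\frac{\beta}{N'}}\,u\,\frac{x}{\lvert x\rvert^{2}}.
\]
The first term has $L^N$ norm $\lVert\nabla u\rVert_{N,w_{2\beta}}\le1$. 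The $N$-th power of the second term is $(\beta/N')^N\lvert u\rvert^N\lvert x\rvert^{-N}X_1^{N-\beta(N-1)}=(\beta/N')^N\lvert u\rvert^N\lvert x\rvert^{-N}w_{2\beta}^{1-N'/\beta}$. This is precisely the integrand on the left of the Hardy inequality of \Cref{psara}, so its $L^N$ norm is at most $\frac{\beta}{N'}\cdot\frac{N'}{1-\beta}$. Hence $\lVert\nabla v\rVert_N\le\frac{1}{1-\beta}$. Applying \eqref{moser-trudinger} to $(1-\beta)v$ gives $\fint_\Omega e^{\alpha_N(1-\beta)^{N'}\lvert u\rvert^{N'}X_1^{-\beta}}\,dx\le C$, and Fatou's lemma passes to general $u$. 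This is the paper's argument, adapted from the $w_{1\beta}$ case in \Cref{moser weighted1}.

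Your optimality sketch is essentially the paper's: Moser functions centred at $a\neq0$, where $w_{2\beta}$ is bounded, and a truncated log-power profile at $a=0$. For the latter, take the profile $(\ln\frac{e\delta}{\lvert x\rvert})^{1-\beta}-1$ supported in $B(0,\delta)\subset\Omega$, so that it vanishes on the boundary. On the plateau $\{t>T\}$ with $T^{1-\beta}=k$, the exponent is $\gtrsim T^{1-\beta}t^{\beta}\ge T$ rather than $\sim t$. This still suffices, since $\lvert\{t>T\}\rvert\approx e^{-NT}$.
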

In radial case, we have the following sharp inequality in the spirit of \eqref{moser-trudinger}.

\begin{theorem} \label{moser weighted3}
    Let $N\geq 2$ and $0<\beta <1$. Then 
     \begin{equation} \label{equation sharp radial moser beta}
\mathcal{C}_{N, \alpha, w_{1\beta},rad}:=  \sup_{\left \lbrace u \in W^{1,N}_{0,rad}\left(\mathbb{B}_{N}, w_{1\beta}\right) :\left \lVert \nabla u \right\rVert_{N, w_{1\beta}} \le 1\right\rbrace}  \fint_{\mathbb{B}_N} e^{\alpha \lvert u \rvert^{N'}  X_1^{-\beta}} \, dx <\infty,
    \end{equation} iff $\alpha \leq \alpha_{N, \beta}\coloneqq N\omega_{N-1}^{\frac{1}{N-1}} (1-\beta) $. 
\end{theorem}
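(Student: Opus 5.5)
Weighted Moser transformation: in the statement the weight is $w_{1\beta}=Y_1^{-\beta(N-1)}$ while the exponent carries $X_1^{-\beta}$. For radial $u\in C^1_{c,rad}(\mathbb{B}_N)$, $u(x)=u(r)$, I would use $r=e^{-t/N}$, $t\in(0,\infty)$, and $\psi(t)=\omega_{N-1}^{1/N}N^{-1/N'}u(e^{-t/N})$ (suitably normalised). A direct computation then gives
\begin{align*}
\int_{\mathbb{B}_N}|\nabla u|^N w_{1\beta}\,dx=\int_0^\infty |\psi'(t)|^N \Big(\frac{t}{N}\Big)^{\beta(N-1)}dt ,
\end{align*}
since $\ln(1/r)=t/N$. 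On the target side,
\begin{align*}
\fint_{\mathbb{B}_N} e^{\alpha|u|^{N'}X_1^{-\beta}}dx=\int_0^\infty e^{\gamma|\psi|^{N'}(1+t/N)^{\beta}-t}\,dt ,
\end{align*}
with $\gamma=\alpha/\alpha_N$. The key one-dimensional step is a Hölder estimate. From $\psi(0)=0$ and $\int_0^\infty|\psi'|^N (t/N)^{\beta(N-1)}\le1$ we get
\begin{align*}
|\psi(t)|\le \Big(\int_0^t (s/N)^{-\beta}ds\Big)^{1/N'}=\Big(\frac{N^\beta t^{1-\beta}}{1-\beta}\Big)^{1/N'} .
\end{align*}
Hence $|\psi|^{N'}(1+t/N)^\beta\le \frac{t}{1-\beta}(1+N/t)^{\beta}$. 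For $t$ large this is $\frac{t}{1-\beta}+O(t^{-1}\cdot t)$, and the correction is bounded: $(1+N/t)^\beta-1\le \beta N/t$, which contributes at most $\beta N/(1-\beta)$. So the exponent is at most $\big(\frac{\gamma}{1-\beta}-1\big)t+C$. This already gives finiteness for $\alpha<\alpha_{N,\beta}=\alpha_N(1-\beta)$, and it shows that $\alpha_{N,\beta}$ is the threshold.

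For the critical case $\alpha=\alpha_{N,\beta}$, I would change variables once more to remove the weight. Set $s=\frac{N^\beta t^{1-\beta}}{1-\beta}$, i.e. $ds=(t/N)^{-\beta}dt$, and $\phi(s)=\psi(t)$. Then the energy becomes $\int_0^\infty|\phi'(s)|^N ds$, exactly as in Moser's lemma. Indeed $|\psi'|^N (t/N)^{\beta(N-1)}dt=|\phi'|^N (t/N)^{-\beta N}(t/N)^{\beta(N-1)}dt=|\phi'|^N ds$. The target integral becomes $\int_0^\infty e^{|\phi|^{N'}(1+N/t)^\beta \cdot\frac{N^\beta t^{1-\beta}}{1-\beta}\cdot\frac{1}{s}\cdots}$. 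Simplifying, $\gamma|\psi|^{N'}(1+t/N)^\beta=|\phi|^{N'}\frac{t}{s}(1+N/t)^\beta$, and since $t\sim c s^{1/(1-\beta)}$ the exponent $-t$ is a superlinear function $-F(s)$ of $s$, with measure $dt=F'(s)ds$. The goal is therefore the estimate
\begin{align*}
\sup_{\int|\phi'|^N\le1}\int_0^\infty e^{F(s)\left(|\phi|^{N'}/s\right)(1+O(1/F))-F(s)}F'(s)\,ds<\infty .
\end{align*}
This is a Moser-type functional with a convex $F$. I would handle it as in Moser's proof, or via the Adams–Garsia lemma in the form of Lemma \ref{radial lemma}. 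Split $s$ into the set where $|\phi|^{N'}\le s-1$ and its complement. On the complement, the standard argument (the estimate $|\phi(s)|^{N'}\le s\,(1-\delta)$-type bounds once the energy after the first crossing is small) gives $F(s)(|\phi|^{N'}/s-1)\le C$ on a set whose contribution is controlled. The main obstacle is this step: the factor $F(s)/s$ grows like $s^{\beta/(1-\beta)}$, so the deviations of $|\phi|^{N'}$ from $s$ get amplified. I expect to handle it using the sharper pointwise bound $s-|\phi(s)|^{N'}\gtrsim s\,(1-\|\phi'\|^N_{L^N(0,s)})$ together with Hölder on $(s_0,s)$, which should give $F(s)(|\phi|^{N'}/s-1)\le -c\,(s-s_0)F'(s_0)$ plus a bounded term. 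Summing the resulting geometric-type integral should yield finiteness. If this fails I would instead work directly in the $t$ variable and treat $\int_0^\infty e^{-t+\gamma|\psi|^{N'}(1+t/N)^\beta}dt$ with a weighted version of Moser's lemma, splitting at the first $t$ with $|\psi|^{N'}(1+t/N)^\beta\ge t-1$.

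Sharpness for $\alpha>\alpha_{N,\beta}$: take Moser-type functions that make the Hölder estimate an equality. Concretely, let $\psi_k(t)=\big(\frac{N^\beta}{1-\beta}\big)^{1/N'}\frac{\min(t,k)^{1-\beta}}{k^{(1-\beta)/N}}$ up to normalisation; equivalently $\phi_k(s)=\min(s,s_k)s_k^{-1/N}$ in the $s$ variable. Its energy is $1$. The integral over $t>k$ is at least $\int_k^\infty e^{(\gamma/(1-\beta))k(1+o(1))-t}dt=e^{(\gamma/(1-\beta)-1)k+o(k)}\to\infty$. Density of $C^1_{c,rad}$ in $W^{1,N}_{0,rad}(\mathbb{B}_N,w_{1\beta})$, from the earlier approximation results, then extends the bound to the full radial space.
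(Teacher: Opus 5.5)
\emph{What is correct.} Your change of variables works. Take $\psi=N^{1/N'}\omega_{N-1}^{1/N}u(e^{-t/N})$ (the power of $N$ in your normalisation has the wrong sign), $s=N^{\beta}t^{1-\beta}/(1-\beta)$ and $t=F(s):=\bigl((1-\beta)N^{-\beta}s\bigr)^{\frac{1}{1-\beta}}$. Then the critical case is equivalent, up to a factor $e^{\beta N}$, to
\[
\sup_{\phi(0)=0,\ \int_0^\infty|\phi'|^N\,ds\le 1}\ \int_0^\infty e^{-F(s)\left(1-|\phi(s)|^{N'}/s\right)}F'(s)\,ds<\infty .
\]
Your subcritical bound is fine, and so is your blow-up family, which is the paper's $\xi_\varepsilon$ written in the variable $t$.

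\emph{The gap.} This estimate is the entire content of the theorem, since the case $\alpha<\alpha_{N,\beta}$ follows from it by monotonicity. You never prove it; you only describe what you expect, and the steps you name do not work.
\begin{itemize}
\item Adams' lemma does not apply as stated. In the variable $\sigma=F(s)$, the kernel representing $(\sigma/s)^{1/N'}\phi(s)$ against a function $g$ with $\int g^N\,d\rho=\int|\phi'|^N\,ds$ satisfies $a(\rho,\sigma)^{N'}=(1-\beta)(\sigma/\rho)^{\beta}$. This violates $a\le 1$ on $0<\rho<\sigma$. Also, Lemma \ref{radial lemma} contains only pointwise bounds, not an Adams--Garsia statement.
\item The split at $\{|\phi|^{N'}\le s-1\}$ fails. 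For the Moser functions $\phi_k(s)=\min(s,k)k^{-1/N}$ the complement contains $[k,k+1)$. Its $F'(s)\,ds$-measure is at least $F'(k)\to\infty$, so a bound ``exponent $\le C$'' there gives nothing.
\item The estimate you hope for, $F(s)(|\phi|^{N'}/s-1)\le -c(s-s_0)F'(s_0)+C$, is false when $s_0$ is where that complement begins. For $\phi_k$ this is $s_0\approx k-N+1$. At $s=k$ the left side is $0$, while the right side is $\approx -c(N-1)F'(k)+C\to-\infty$.
\end{itemize}
The reason is that at such an $s_0$ H\"older only bounds the remaining energy by $O(1/s_0)$. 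The amplification by $F(s)/s$ needs control at the finer scale $1/F(s_0)$.

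\emph{What is missing.} You need to localise at the scale $s/F(s)$. In your variables a layer-cake argument does this. It suffices to show that
\[
E_\lambda=\{s: F(s)(1-|\phi(s)|^{N'}/s)\le\lambda\}
\]
has $F'(s)\,ds$-measure at most $C(1+\lambda)$.
\begin{itemize}
\item The part of $E_\lambda$ where $F<K\lambda$ has measure at most $K\lambda$.
\item Let $s_1$ be the first point of $E_\lambda$ with $F(s_1)\ge K\lambda$. H\"older gives $\int_{s_1}^\infty|\phi'|^N\le (N-1)\lambda/F(s_1)\le (N-1)/K$.
\item H\"older on $(s_1,s)$ and Young's inequality then give, for $K=K(N)$ large, $E_\lambda\cap\{F\ge K\lambda\}\subset[s_1,\,s_1+C\lambda s_1/F(s_1)]$. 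This interval has $F'\,ds$-measure at most $C\lambda$, because $sF'(2s)/F(s)$ is bounded.
\end{itemize}
The paper instead stays in the $r$-variable and runs Moser's argument around the maximum point $r_1$ of $Y_1^{(1-\beta)(N-1)}u^N$; in your variables this is the maximiser of $|\phi(s)|^{N'}/s$. It uses the deficit estimate \eqref{vv}, three pointwise bounds derived from it, and a four-region split with $\delta$-dependent endpoints $r_1^{(1\pm d\delta)^{1/(1-\beta)}}$. The case $\delta\ge\delta_0$ is treated separately. Some quantitative argument of this kind has to replace your heuristic before the proposal proves the theorem.
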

In fact, when $\alpha> \alpha_{N,\beta}$, we prove the following stronger version of \eqref{equation sharp radial moser beta} (see Proposition \ref{final battle} in Section \ref{proof moser idea}).
\begin{align*}
\sup_{\left \lbrace u \in W^{1,N}_{0,rad}\left(\mathbb{B}_{N}, w_{1\beta}\right) :\left \lVert \nabla u \right\rVert_{N, w_{1\beta}} \le 1\right\rbrace}  \fint_{\mathbb{B}_N} e^{\alpha \lvert u \rvert^{N'}  Y_1^{-\beta}} \, dx = \infty.
\end{align*}
Combining this with \eqref{equation sharp radial moser beta}, we obtain the sharp versions of \eqref{moser trudinger weight} and \eqref{moser trudinger weight X} in the space $W^{1,N}_{0,rad}\left(\mathbb{B}_{N}, w_{i\beta}\right)$, for $i=1,2$, as stated below. 
\begin{align}\label{equation sharp radial moser beta combined}
\sup_{\left \lbrace u \in W^{1,N}_{0,rad}\left(\mathbb{B}_{N}, w_{i\beta}\right) :\left \lVert \nabla u \right\rVert_{N, w_{i\beta}} \le 1\right\rbrace}  \fint_{\mathbb{B}_N} e^{\alpha \lvert u \rvert^{N'}  w_{i\beta}^{\frac{1}{N-1}}} \, dx <\infty,
\end{align}
iff $\alpha\leq \alpha_{N,\beta}$. As in the case of $\beta=1$, the failure of Poly\'a-Szeg\"o inequality with the weight $w_{i\beta}$ (see Lemma \ref{pz fails} in \Cref{app B}) obstructs us from establishing sharp versions of \eqref{moser trudinger weight} and \eqref{moser trudinger weight X}.  
\par We note that the inequalities \eqref{moser trudinger weight}, \eqref{moser trudinger weight X} and \eqref{equation sharp radial moser beta combined} imply the embedding of the spaces $ W^{1,N}_{0}\left(\Omega, w_{i\beta}\right)$ and $W^{1,N}_{0,rad}\left(\mathbb{B}_{N}, w_{i\beta}\right)$ into the Musielak-Orlicz spaces $L^{\phi_{w_{i\beta}}} (\Omega)$ and $L^{\phi_{w_{i\beta}}} (\mathbb{B}_N)$, respectively. These spaces are defined by the generalized Young's function 
$$\phi_{w_{i\beta}}(x,t) = \exp\left({t^{N'} w_{i\beta}^{\frac{1}{N-1}}}(x)\right)-1$$ for $t\geq 0$ and $x\in \Omega$ or $\mathbb{B}_N$. On the other hand, we obtain from Theorem \ref{ruf1} below, that $W^{1,N}_{0,rad}\left(\mathbb{B}_{N}, w_{i\beta}\right)$ is embedded into the Orlicz space $L^{\psi_{N, \beta}}(\mathbb{B}_N)$, where the Young's function is given by $\psi_{N, \beta}(t) = \exp(t^{\frac{N'}{1-\beta}})- 1$, for $t\geq 0$. 
\begin{thma} [Calanchi-Ruf \cite{RufClanchi2015, RufN}] \label{ruf1}
    Let  $0< \beta <1$ and $w_{i\beta}$ is defined as in \eqref{weights w} for $i=1,2$. Then we have the following result,
    \begin{align}\label{C-R beta sharp}
       \mathcal{D}_{N, \alpha, w_{i\beta},rad} \coloneqq \sup_{\{u \in W^{1,N}_{0,rad}(\mathbb{B}_{N},w_{i\beta}) : \|u\|_{N,w_{i\beta}}\le 1\}} \int_{\mathbb{B}_{N}} e^{\alpha \lvert u \rvert^{\frac{N'}{1-\beta}}}\, dx < \infty,
    \end{align}
    if and only if $\alpha \le \kappa_{N,\beta}\coloneqq N \left(\omega_{N-1}^{\frac{1}{N-1}}(1-\beta)\right)^{\frac{1}{1- \beta}}$.
\end{thma}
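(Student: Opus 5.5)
The plan is to reduce to a one-dimensional problem with the classical Moser change of variables, and to treat the two directions separately: a Moser-type estimate for $\alpha\le\kappa_{N,\beta}$ and a Moser concentrating sequence for $\alpha>\kappa_{N,\beta}$.

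\textbf{Reduction to one dimension.} For radial $u\in W^{1,N}_{0,rad}(\mathbb{B}_N,w_{i\beta})$ I set $|x|=e^{-t/N}$ and $w(t)=N^{\frac{N-1}{N}}\omega_{N-1}^{\frac1N}u(x)$. Then
\begin{align*}
\int_{\mathbb{B}_N}|\nabla u|^N w_{i\beta}\,dx=\int_0^\infty |w'(t)|^N\rho_i(t)\,dt,\qquad \int_{\mathbb{B}_N} e^{\alpha|u|^{\frac{N'}{1-\beta}}}dx=\frac{\omega_{N-1}}{N}\int_0^\infty e^{c\,|w|^{\frac{N'}{1-\beta}}-t}\,dt ,
\end{align*}
where $\rho_1(t)=(t/N)^{\beta(N-1)}$ and $\rho_2(t)=(1+t/N)^{\beta(N-1)}$. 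Here $c=\alpha\,(N^{N-1}\omega_{N-1})^{-\frac{1}{(N-1)(1-\beta)}}$, and $w(0)=0$. Since $\rho_2\ge\rho_1$, both the sufficiency and the necessity arguments can be run with $\rho_1$, with only lower-order corrections when $i=2$.

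\textbf{Sufficiency.} I introduce the new variable $s=\int_0^t\rho^{-1/(N-1)}$. For $\rho_1$ this gives $s=N^{\beta}t^{1-\beta}/(1-\beta)$. Hölder's inequality then yields $|w(t)|^{N'}\le s(t)\big(\int_0^t|w'|^N\rho\big)^{\frac1{N-1}}\le s(t)$. Hence
\[
|w|^{\frac{N'}{1-\beta}}\le \Big(\tfrac{N^\beta}{1-\beta}\Big)^{\frac1{1-\beta}}t ,
\]
and the threshold is exactly where $c\,(N^{\beta}/(1-\beta))^{1/(1-\beta)}=1$. I expect this to reproduce $\kappa_{N,\beta}$ after simplification. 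In the subcritical regime the integral is then bounded trivially. At the critical value I would follow Moser's argument, in the form of Adams' lemma or Leckband's version, applied in the variable $s$. Writing $e^{-t}=e^{-C s^{1/(1-\beta)}}$, the problem becomes a one-dimensional Moser functional with a kernel in $s$, and the mass $\int_0^T|w'|^N\rho$ being less than $1$ on initial segments gives the usual gain. The case $i=2$ follows either from $\rho_2\ge\rho_1$ (indeed $\|\nabla u\|_{w_{1\beta}}\le\|\nabla u\|_{w_{2\beta}}$) or by the same argument.

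\textbf{Necessity.} For $\alpha>\kappa_{N,\beta}$ I use the extremal-type sequence $w_k(t)=s(t)/s(k)^{1/N'}$ for $t\le k$ and $w_k(t)=s(k)^{1/N}$ for $t>k$. This is normalized because $\int_0^k s'^N\rho=s(k)$. At $t=k$ the exponent is $c\,s(k)^{\frac{1}{1-\beta}}-k=(c/\kappa\text{-ratio}-1)k\to\infty$, so the integral over $t>k$ tends to infinity. For $\rho_2$ the factor $(1+t/N)$ versus $(t/N)$ changes $s(k)$ only by lower-order terms, so the same blow-up persists.

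\textbf{Main obstacle.} The delicate step is the critical case $\alpha=\kappa_{N,\beta}$. There the change of variables is nonlinear, since $t\sim s^{1/(1-\beta)}$. Adams' lemma must be checked to apply with the exponent $|w|^{N'/(1-\beta)}$, that is, we need $\int_0^\infty e^{-F(t)}dt<\infty$ with $F(t)=t-c|w|^{N'/(1-\beta)}$. I would first try the direct Moser splitting on the set where $F$ is small, using the convexity $(a+b)^{1/(1-\beta)}$ estimates to control it.
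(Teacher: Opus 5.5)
Your one-dimensional reduction is fine. So are the threshold computation (the condition $c\,(N^\beta/(1-\beta))^{1/(1-\beta)}=1$ is indeed $\alpha=\kappa_{N,\beta}$), the subcritical case, the passage from $i=1$ to $i=2$ via $w_{2\beta}\ge w_{1\beta}$, and the concentrating sequence for $\alpha>\kappa_{N,\beta}$.

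The gap is the critical case $\alpha=\kappa_{N,\beta}$, which is the only nontrivial part of the theorem. There you only name tools, and those tools do not apply as stated.
\begin{itemize}
\item Set $\phi=|\dot w|\rho_1^{1/N}$, so $\int_0^\infty\phi^N\,dt\le 1$. The loss-free linearization $|w(t)|^{\beta N'/(1-\beta)}\le s(t)^{\beta/(1-\beta)}$ gives $c\,|w(t)|^{N'/(1-\beta)}\le\big(\int_0^t a(\tau,t)\phi(\tau)\,d\tau\big)^{N'}$ with $a(\tau,t)^{N'}=(1-\beta)(t/\tau)^{\beta}$ at the critical $c$. This exceeds $1$ for $\tau<(1-\beta)^{1/\beta}t$, so the hypothesis $a\le 1$ on $0<\tau<t$ in Adams' or Leckband's lemma fails. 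Since $\int_0^t a^{N'}d\tau=t$, there is no room to rescale.
\item Linearizing instead through $d\big(|w|^{1/(1-\beta)}\big)$ gives $a^{N'}\equiv(1-\beta)^{1-N'}>1$, which loses the sharp constant.
\item In the variable $s$ the functional is $\int_0^\infty e^{C(v^p-s^p)}\,Cp\,s^{p-1}\,ds$, with $p=1/(1-\beta)$, $v=|w|^{N'}\le s$ and $\int_0^\infty|w_s|^N ds\le1$. This is not of the form $\int e^{-F(t)}dt$ that those lemmas cover.
\end{itemize}

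What is missing is a distribution-function estimate adapted to the measure $d\mu=Cp\,s^{p-1}ds$. You need $\mu\{G\le K\}\le A(K+1)$ for $G=C(s^p-v^p)$; then $\int e^{-G}d\mu=\int_0^\infty\mu\{G<K\}e^{-K}dK$ is bounded. The estimate rests on a scaling fact. Suppose $s_1$ is large and $G(s_1)\le K$. Then $v(s_1)\le s_1\big(\int_0^{s_1}|w_s|^N\big)^{1/(N-1)}$ forces the energy remaining after $s_1$ to be $\lesssim K s_1^{-p}$, rather than $K/s_1$ as in Moser's setting. By H\"older, $\{G\le K\}\cap[s_1,\infty)$ then lies in an interval of length $O\big((K+1)s_1^{1-p}\big)$, which exactly offsets the density $s^{p-1}$ of $\mu$. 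Until you supply this or an equivalent argument, your proof covers only $\alpha<\kappa_{N,\beta}$.

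For comparison, the paper does not reprove this theorem; it quotes it from Calanchi--Ruf. It does indicate another route to the critical finiteness. By \Cref{radial lemma}(i), $\kappa_{N,\beta}|u|^{N'/(1-\beta)}=\kappa_{N,\beta}|u|^{N'}|u|^{\beta N'/(1-\beta)}\le\alpha_{N,\beta}|u|^{N'}X_1^{-\beta}$, so the critical case follows from the sharp inequality of \Cref{moser weighted3}. The paper proves that inequality by a Moser-type argument in the original radial variable. It splits $[0,1]$ at powers of the point $r_1$ where $F_{u,\beta}$ attains its maximum $1-\delta$, and treats $\delta\le\delta_0$ and $\delta\ge\delta_0$ separately.
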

Now, in Proposition \ref{M-O embedding} of Section \ref{prel}, we establish that $L^{\psi_{N, \beta}}(\mathbb{B}_N)$ is embedded into $L^{\phi_{w_{i\beta}}} (\mathbb{B}_N)$. However, it is easy to see that, these two spaces are not equal in general. Nevertheless, the finiteness of $\mathcal{C}_{N, \alpha_{N,\beta}, w_{1\beta},rad}$ is equivalent to the finiteness of $\mathcal{D}_{N, \kappa_{N,\beta}, w_{1\beta},rad}$.
In fact,  the finiteness of $\mathcal{D}_{N, \kappa_{N,\beta}, w_{1\beta},rad}$ follows from that of $\mathcal{C}_{N, \alpha_{N,\beta}, w_{1\beta},rad}$ by the following estimate, which is a consequence of Lemma \ref{radial lemma} $(i)$ and $(ii)$.
\begin{align*}
\int_{\mathbb{B}_{N}} e^{\kappa_{N,\beta} \lvert u \rvert^{\frac{N'}{1-\beta}}}\, dx = \int_{\mathbb{B}_{N}} e^{\kappa_{N,\beta} \lvert u \rvert^{N'}\lvert u \rvert^{\frac{\beta N'}{1-\beta}}}\, dx \leq \int_{\mathbb{B}_{N}} e^{\alpha_{N,\beta} \lvert u \rvert^{N'}X_{1}^{-\beta}}\, dx. 
\end{align*}
However, we were unable to find a simpler proof of the reverse implication. 
\par Next, we consider the case where $\beta>1$. As noted earlier, we will consider only the weight $w_{2\beta}$, since $w_{1\beta} \notin A_{N}$. Calanchi and Ruf established in \cite{RufClanchi2015,RufN} that $W^{1,N}_{0,rad}\left(\mathbb{B}_N,w_{2\beta}\right) \hookrightarrow L^{\infty}\left(\mathbb{B}_N\right)$. However, the entire space $W^{1,N}_{0}\left(\mathbb{B}_N,w_{2\beta}\right)$ fails to embed into $L^\infty$, which is fairly straightforward to see. This motivates the search for an exponential integrability result. Indeed, we establish the following theorem.
\begin{theorem} \label{moser beta>1}
   Let $ \beta > 1$, $N\geq 2$, and $\Omega \subset \mathbb{R}^N$ be any bounded domain. Then there exists $\alpha \equiv \alpha(N,\beta)>0$ such that
    \begin{align} \label{moser trudger weight 2}
          \sup_{\left \lbrace u \in W^{1,N}_{0}\left(\Omega \setminus \lbrace 0\rbrace, w_{2\beta}\right) :\left \lVert \nabla u \right\rVert_{N, w_{2\beta}} \le 1\right\rbrace}  \fint_{\Omega} e^{\alpha \lvert u \rvert^{N'} X_1^{-\beta}} \, dx <\infty.
          \end{align}
    Moreover, if $\Omega$ contains the origin, then 
    \begin{align} \label{moser trudger weight 2 entire space}
          \sup_{\left \lbrace u \in W^{1,N}_{0}\left(\Omega, w_{2\beta}\right) :\left \lVert \nabla u \right\rVert_{N, w_{2\beta}} \le 1\right\rbrace}  \fint_{\Omega} e^{\alpha \lvert u \rvert^{N'} X_1^{-\beta}} \, dx =\infty.
          \end{align}
    for any  $\alpha > 0$.
\end{theorem}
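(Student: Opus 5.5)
The plan for \eqref{moser trudger weight 2} is to turn the weighted problem into an unweighted Moser–Trudinger problem by a change of unknown. For $u\in C^1_c(\Omega\setminus\{0\})$ I set
\[
v:=X_1^{-s}u,\qquad s:=\frac{\beta(N-1)}{N}=\frac{\beta}{N'},
\]
where $X_1=X_1(|x|/R_\Omega)$. With this choice $|v|^{N'}=|u|^{N'}X_1^{-\beta}$, which is exactly the exponent in \eqref{moser trudger weight 2}. Since $\frac{d}{dr}X_1(r)=X_1^2(r)/r$, we get
\[
|\nabla v|\le X_1^{-s}|\nabla u|+s\,|u|\,\frac{X_1^{1-s}}{|x|}.
\]
Taking $L^N$ norms gives
\[
\|\nabla v\|_N\le \|\nabla u\|_{N,w_{2\beta}}+s\Big(\int_\Omega \frac{|u|^N}{|x|^N}X_1^{N-\beta(N-1)}\,dx\Big)^{1/N}.
\]

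The key step is the logarithmic Hardy inequality
\begin{equation*}
\int_\Omega \frac{|u|^N}{|x|^N}X_1^{N-\beta(N-1)}\,dx\le C(N,\beta)\int_\Omega|\nabla u|^NX_1^{-\beta(N-1)}\,dx,\qquad u\in C^1_c(\Omega\setminus\{0\}).
\end{equation*}
To prove it, I pass to polar coordinates and use the variable $t=\log(eR_\Omega/r)\in[1,\infty)$. This reduces it, on each ray, to a one-dimensional weighted Hardy inequality of the form $\int_1^\infty |g|^N t^{a-N}\,dt\le C\int_1^\infty|g'|^Nt^{a}\,dt$ with $a=\beta(N-1)$. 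The nonangular part of the gradient is simply dropped.

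Because $a-N+1=(N-1)(\beta-1)>0$, this is the "vanishing at infinity" branch of the Hardy inequality. Here $t=\infty$ corresponds to the origin, and $g$ vanishes there because $u$ vanishes near $0$. The standard integration by parts $\int |g|^N\,d(t^{a-N+1})$ followed by Hölder gives the constant $C=\big(N/((N-1)(\beta-1))\big)^N$. I expect this step to be the only real obstacle. It is exactly where $\beta>1$ and the removal of the origin are used, and care is needed with the boundary term at $t=1$. That term has the favourable sign, and if $\Omega\not\subset B(0,R_\Omega)$ only $X_1\le1$ and the extension of the weight by $1$ are relevant. Points of $\Omega$ outside the ball cause no trouble, since there the weights equal $1$.

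With the Hardy inequality in hand, $\|\nabla v\|_N\le C_1(N,\beta)$ whenever $\|\nabla u\|_{N,w_{2\beta}}\le1$, and $v\in W^{1,N}_0(\Omega)$. Applying \eqref{moser-trudinger} to $v/C_1$ gives \eqref{moser trudger weight 2} with $\alpha=\alpha_N C_1^{-N'}$. The estimate then extends from $C^1_c(\Omega\setminus\{0\})$ to the closure by density and Fatou's lemma.

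For \eqref{moser trudger weight 2 entire space}, I take a fixed $u\in C^1_c(\Omega)$ with $u\equiv1$ on a small ball $B(0,\delta)\subset\Omega$. Its weighted norm is finite because $\nabla u$ vanishes near the origin. After normalizing, $u\equiv c>0$ near $0$. Then the integrand near the origin is
\[
e^{\alpha c^{N'}X_1^{-\beta}}=\exp\big(\alpha c^{N'}(\log(eR_\Omega/|x|))^{\beta}\big).
\]
Since $\beta>1$, this eventually dominates $|x|^{-M}$ for every $M$, so it is not integrable near $0$ for any $\alpha>0$. This is consistent with the first part: the origin has positive weighted capacity, so the two spaces differ.
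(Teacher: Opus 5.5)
Your proof is correct. For \eqref{moser trudger weight 2} it follows the paper's argument. The paper proves this ``analogously'' to \eqref{moser trudinger weight}: it substitutes $v=uX_1^{-\beta/N'}$, applies the Hardy inequality of Lemma~\ref{psara}, and then invokes \eqref{moser-trudinger}. For $i=2$ the weight $w_{2\beta}^{1-N'/\beta}$ in that lemma is exactly your $X_1^{N-\beta(N-1)}$, with the same constant $(N'/(\beta-1))^N$. The only difference is that you prove the Hardy inequality yourself, by the one-dimensional integration by parts, instead of citing it. Two minor corrections: it is the angular part of $\nabla u$ that you discard, keeping $|\partial_r u|\le|\nabla u|$. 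Also, since $\Omega$ is open, $\Omega\subset B(0,R_\Omega)$, so the boundary term at $t=1$ actually vanishes.

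For \eqref{moser trudger weight 2 entire space} your route differs from the paper's. The paper builds a family $u_{r_1}$, equal to the constant $[1-(\ln\frac{e\delta}{r_1})^{1-\beta}]^{1/N'}$ on $B(0,r_1)$, and bounds its weighted energy uniformly. It then shows that the lower bound $\exp\big(\tilde\alpha[(\ln\frac{e\delta}{r_1})^{\beta}-\ln\frac{e\delta}{r_1}]-N\ln\frac{1}{r_1}\big)$ blows up as $r_1\to0$. You instead observe that one fixed $u\in C^1_c(\Omega)$ that is a positive constant near $0$ already has infinite integral. This holds because $\exp(\kappa(\ln\frac{eR_\Omega}{|x|})^{\beta})$ is not integrable at the origin for any $\kappa>0$ when $\beta>1$.

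Your argument is shorter and shows more. The divergence is not a failure of uniformity over the unit ball; it occurs for every $u\in C^1_c(\Omega)$ with $u(0)\neq0$. In particular, each normalized $u_{r_1}$ of the paper already gives $+\infty$, so letting $r_1\to0$ is not needed.
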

Note that the distinction between \eqref{moser trudger weight 2} and \eqref{moser trudger weight 2 entire space} arises because the spaces $W^{1,N}_{0}\left(\Omega, w_{2\beta}\right) \neq W^{1,N}_{0}\left(\Omega \setminus \{0\}, w_{2\beta}\right)$ (see \Cref{not approximation} below for a proof).  

\par We now outline the major ideas utilized in this article. The primary focus is the proof of \eqref{equation non radial moser} in Theorem \ref{non radial moser}. To establish this result on balls, we first decompose the function u using spherical harmonics. Subsequently, we apply Lemma \ref{radial lemma}(i) to control the spherical mean of u, and we use the Poincaré inequality alongside Trudinger's technique \cite{Trudinger} to estimate the remaining components of u. This same approach was employed in \cite{di2024optimal} to establish the optimal Leray-Trudinger inequality \eqref{op-leray-trudinger}. In fact, the use of spherical decomposition to obtain a non-radial extension of a radial inequality is standard in the literature (see, for instance, \cite{VZ, Adi-Tin-Hardy-For-Dirac, Debdip-hardyrellich}).

\par The second main objective is establishing the finiteness of $\mathcal{C}_{N, \alpha_{N\beta}, w_{1\beta},rad}$ as in Theorem \ref{moser weighted3}. Despite sharing structural similarities, Theorem \ref{moser weighted3} and Proposition \eqref{moser} possess fundamental differences. Specifically, the bound on $\alpha_{N,\beta} |u|^{N'}X_1^{-\beta}$ obtained from Lemma \ref{radial lemma}(i) is not integrable, whereas the bound on $\alpha_N \lvert u \rvert^{N'} X_2 X_1^{-1}$ obtained from Lemma \ref{radial lemma}(iii) is integrable. To overcome this obstacle and prove the finiteness of $\mathcal{C}_{N, \alpha_{N\beta}, w_{1\beta},rad}$, we adapt Moser's classical method, originally developed to establish \eqref{moser-trudinger} with $\alpha=\alpha_N$. In outlining Moser's approach, consider an arbitrary radial function u within the unit ball of the Sobolev space $W_0^{1,N}(\mathbb{B}_N)$. Moser demonstrated the existence of a uniform $\delta_0>0$ with the property that, under an exponential change of coordinates, if u is contained within the ``$\delta_0$-neighbourhood'' (see equation $(10)$ in \cite{Moser}) of any member of a specific family of broken-line functions, the corresponding integrand in \eqref{moser-trudinger} remains uniformly bounded. This family of functions is now universally recognized as the \emph{Moser functions}. However, in the framework of Theorem \ref{moser weighted3}, employing exponential coordinates (or any alternative change of coordinates) is unsuitable. Consequently, we execute Moser's strategy without relying on any change of variables. Similar methods are used in the two recent articles \cite{Ruf-Bliss-MT, Ruf-Bliss-MT-dimN}, where they prove sharp exponential integrability similar to the one derived in Theorem \ref{moser weighted3} but with respect to an unweighted energy. 

\par Finally, when proving \eqref{equation non radial moser optimal} in Theorem \ref{non radial moser}, as well as analogous inequalities such as \eqref{equation non radial moser optimal beta} in Theorem \ref{moser weighted1}, we rely on the existence of two distinct Moser type functions. The first is supported near the origin and is determined by the weight, while the second is the original Moser function, supported away from the origin.

\par The article is organized as follows. Section \ref{prel} introduces necessary preliminaries, including Muckenhoupt $A_p$ weights, Musielak-Orlicz spaces, and fundamental inequalities used throughout this work. Section \ref{prop w-sob spaces} discusses the basic properties of the relevant weighted Sobolev spaces, featuring key density results established in Theorems \ref{approximation} and \ref{fail approximation}. In Section \ref{case beta 1}, we present our main results for the case $\beta=1$, proving Theorems \ref{non radial moser} and \ref{weak moser}, as well as Propositions \ref{moser} and \ref{lerray weak}. Section \ref{case gen beta} extends our analysis to the case $\beta\neq1$, where we prove Theorems \ref{moser weighted1}, \ref{moser weighted1/2}, and \ref{moser beta>1}. Section \ref{proof moser idea} is dedicated exclusively to the proof of Theorem \ref{moser weighted3}. Lastly, Appendices \ref{app A} and \ref{app B} provide a comparative analysis of the Leray and weighted energies and demonstrate the failure of the P\'olya-Szeg\"o inequality in weighted Sobolev spaces.

\section{Preliminaries}\label{prel} 
\subsection{ Muckenhoupt's \texorpdfstring{$A_p$}{}  weights} First we recall the definition of $A_p$ weight.
\begin{definition}
Let $w$ be a locally integrable non-negative function defined on $\mathbb{R}^N$ such that $0 < w < \infty$ a.e. on $\mathbb{R}^N$. Let $1 < p < \infty,$ we say that $w$ is an $A_p$ weight if there exists a positive constant $c_{p,w}$ such that,
\begin{align*}
    \left(\fint_{B} w(x) \, dx\right)\left(\fint_{B} w^{1/(1-p)}(x) \, dx\right)^{p-1} \le c_{p,w},
\end{align*}
for all balls $B \subset \mathbb{R}^N.$
\end{definition}

\begin{lemma}\label{ap weights lem}
    We have 
    \begin{itemize}
        \item[(i)]  $w_{2\beta} \in A_N$ for any $\beta \ge 0.$
        \item[(ii)] $w_{1\beta} \in A_N$ for any $0 \le \beta < 1.$
        \item[(iii)] $w_{1\beta} \notin A_N$ for any $ \beta \geq 1.$
    \end{itemize}
\end{lemma}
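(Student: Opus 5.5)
The plan is to verify the $A_N$ condition separately for balls near the origin and for balls away from it. We work with the radial profiles. Set $w(x)=g(|x|/R_\Omega)$, where
\[
g(r)=\bigl(\ln(e/r)\bigr)^{\beta(N-1)} \quad\text{or}\quad g(r)=\bigl(\ln(1/r)\bigr)^{\beta(N-1)} \quad \text{for } r<1,
\]
and $g\equiv 1$ for $r\ge 1$. The conjugate exponent is $w^{1/(1-N)}=g^{-1/(N-1)}$, which equals $\ln(e/r)^{-\beta}$ or $\ln(1/r)^{-\beta}$. By scaling we take $R_\Omega=1$. Fix a ball $B=B(x_0,\rho)$ and split into two cases.

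\emph{Case $\rho\le |x_0|/2$.} On $B$ we have $|x|\in[|x_0|/2,\,3|x_0|/2]$. For $w_{2\beta}$ the function $\ln(e/r)$ is comparable on such an annulus (ratio bounded by an absolute constant, since $\ln(e/r)\ge1$), and $g\equiv1$ beyond $r=1$. So $w$ is essentially constant on $B$, and the $A_N$ product is bounded. For $w_{1\beta}$ the same holds when $|x_0|\le 1/2$, say. Near $|x|=1$, however, $\ln(1/r)$ vanishes, and this is where care is needed. There $w_{1\beta}\approx (1-r)^{\beta(N-1)}$, a one-dimensional power weight in the distance to the sphere. Its $A_N$ condition is the standard one,
\[
-1<\beta(N-1)<N-1,
\]
that is, $\beta<1$, which is exactly the threshold in (ii). I would verify it through the $1$D computation: average $(1-r)^{a}$ and $(1-r)^{-a/(N-1)}$ over an interval $I$ of length $\ell$ at distance $d$ from $1$. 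Both are comparable to $(d+\ell)^{a}$ and $(d+\ell)^{-a/(N-1)}$ whenever $a>-1$ and $a/(N-1)<1$, and integrability of $(1-r)^{-\beta}$ is what forces $\beta<1$. Since the weight depends only on $r$ and is smooth in the angular directions, this transfers to balls.

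\emph{Case $\rho>|x_0|/2$.} Here $B\subset B(0,3\rho)$ and $|B|\sim|B(0,3\rho)|$, so it suffices to treat balls centred at the origin. For $\rho\le 1/2$ we compute
\[
\fint_{B(0,\rho)} \ln(e/|x|)^{\beta(N-1)}\,dx \sim \ln(e/\rho)^{\beta(N-1)},
\]
since logarithmic powers are integrable with mass dominated near the outer radius; this follows from a direct substitution $r=\rho s$. Similarly the conjugate average is $\sim \ln(e/\rho)^{-\beta}$, and the product of the first with the $(N-1)$-th power of the second is $O(1)$. For large $\rho$, both averages are bounded above and below by constants (the weight is $1$ outside the unit ball, and the log singularity at the origin is integrable), except for the boundary zero of $w_{1\beta}$. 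That zero is handled by the integrability of $(1-r)^{-\beta}$ when $\beta<1$. This proves (i) and (ii).

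For (iii), with $\beta\ge1$, take balls centred at a point $e_1$ of the unit sphere with radius $\rho\to0$. The weight $w_{1\beta}^{1/(1-N)}\sim|1-|x||^{-\beta}$ is then not integrable over $B$, so the second factor in the $A_N$ product is infinite. In fact $w_{1\beta}^{1/(1-N)}$ fails to be locally integrable, and the definition is violated.

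The main obstacle I expect is the bookkeeping in the mixed regimes for $w_{1\beta}$: balls straddling both the origin and the sphere $|x|=1$, and balls near the sphere where the radial profile must be reduced to the one-dimensional power weight. I would handle these by comparing with $(1-r)^{\pm}$ powers and splitting $B$ into the parts inside and outside $\{|x|<1/2\}$.
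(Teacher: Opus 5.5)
Your arguments for (i) and (iii) are sound. For (iii) you use the same mechanism as the paper. The paper computes $\fint_{B(0,R_\Omega)} w_{1\beta}^{-1/(N-1)}\,dx = N\int_0^\infty z^{-\beta}e^{-Nz}\,dz=\infty$ for $\beta\ge1$, where the divergence comes from $z\to0$, i.e.\ from $|x|\to R_\Omega$. Your small balls centred on the sphere detect the same non-integrability. Strictly, $w_{1\beta}^{-1/(N-1)}$ equals $1$ outside the sphere, not $\lvert 1-|x|\rvert^{-\beta}$, but the inner half of the ball already diverges. For (i) and (ii) the paper gives no argument beyond a reference to an external source, so there is nothing written to compare against.

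The gap is in (ii), at the sphere $|x|=R_\Omega$. You reduce to the criterion $-1<\beta(N-1)<N-1$, but that is the $A_N$ criterion for the \emph{two-sided} weight $\lvert 1-r\rvert^{\beta(N-1)}$. Under the paper's convention, which you adopt yourself ($g\equiv1$ for $r\ge1$), $w_{1\beta}$ behaves like $(1-r)^{\beta(N-1)}$ inside and is identically $1$ outside. So it jumps from $0$ to $1$ across the sphere, and then the $A_N$ condition fails for every $\beta>0$.

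To see this, take $R_\Omega=1$ and $B=B(e_1,\rho)$ with $0<\rho<1/2$.
\begin{itemize}
\item $B\setminus B(0,1)$ and $B\cap B(0,1)$ each contain a ball of radius $\rho/2$ (centred at $(1\pm\rho/2)e_1$), so each has measure at least $2^{-N}|B|$.
\item On the first set $w_{1\beta}=1$.
\item On the second set $\ln(1/|x|)\le\ln\frac{1}{1-\rho}\le 2\rho$, so $w_{1\beta}^{-1/(N-1)}=(\ln(1/|x|))^{-\beta}\ge(2\rho)^{-\beta}$.
\end{itemize}
Hence
\[
\Bigl(\fint_B w_{1\beta}\,dx\Bigr)\Bigl(\fint_B w_{1\beta}^{-\frac{1}{N-1}}\,dx\Bigr)^{N-1}\ \ge\ 2^{-N}\bigl(2^{-N}(2\rho)^{-\beta}\bigr)^{N-1}\longrightarrow\infty \qquad (\rho\to 0).
\]

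This step therefore cannot be carried out. Statement (ii) as written is false for every $0<\beta<1$; it holds only trivially at $\beta=0$. The paper's citation does not address this. By contrast, $w_{2\beta}$ equals $1$ on both sides of the sphere (since $\ln e=1$), which is why your argument for (i) is fine.

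What your one-dimensional power-weight computation does prove is the $A_N$ property for a reflected extension. For example, take $w_{1\beta}(x)=\lvert\ln(R_\Omega/|x|)\rvert^{\beta(N-1)}$ for $R_\Omega\le|x|\le eR_\Omega$ and $w_{1\beta}=1$ beyond. The paper uses $A_N$ only to get $N$-admissibility of the weight for spaces on $\Omega\subset B(0,R_\Omega)$. Changing the extension outside $B(0,R_\Omega)$ is therefore harmless downstream, but you need to state that modification explicitly rather than claim (ii) as written.
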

\begin{proof}
The proofs of $(i)$ and $(ii)$ follow from a similar argument to that used in \cite[Section 2, Proposition 1]{Sami}. We omit the details.  To prove $(iii)$, we note that 
        \begin{align*}
           \fint_{B(0,R_{\Omega})} w_{1\beta}^{-\frac{1}{N-1}} \, dx=  \fint_{B(0,R_{\Omega})}  \left(\ln \frac{ R_{\Omega}}{\lvert x \rvert}\right)^{-\beta}\, dx  &= N \int_{0}^{R_{\Omega}} \left(\ln \frac{R_{\Omega}}{r}\right)^{-\beta} r^{N-1} \, dr.
                    \end{align*}
        
      Thus 
      \begin{align*}
      \fint_{B(0,R_{\Omega})} w_{1\beta}^{-\frac{1}{N-1}} \, dx=N \int_{0}^{\infty} z^{-\beta} e^{-Nz} \, dz \ge c_N \int_{0}^{1} z^{-\beta} \, dz =\infty,
      \end{align*}
  for any $\beta \ge 1$ which implies $(iii)$. \\
\end{proof}

\subsection{Musielak-Orlicz Spaces}
The following definitions are taken from \cite[Section 2.3]{Var-Exp-Spaces-Diening-etal}.
\begin{definition}
A convex, left continuous function $\phi: [0,\infty) \to [0,\infty)$ with $\phi(0)=0,$ $\lim_{t\to 0+}\phi(t)=0$ and $\lim_{t\to \infty} \phi(t)= \infty$ is called a $\Phi$-function. In addition, it is called positive if $\phi(t)>0$ for all $t>0$.
\end{definition}

\begin{definition}
Let $\left(A, \Sigma, \mu \right)$ be a $\sigma-$finite, complete measure space. A real function $\phi: A\times [0,\infty)\to [0,\infty)$ is said to be a \emph{generalized} $\Phi$-function on $\left(A, \Sigma, \mu \right)$ if 
\begin{enumerate}
\item $\phi(y,\cdot)$ is a $\Phi$-function for every $y\in A$;
\item $y\mapsto \phi(y,t)$ is measurable for every $t\geq 0$. 
\end{enumerate}
\end{definition}

\begin{definition}\label{M-O space}
Let $\phi$ be a $\Phi$-function on the $\sigma-$finite, complete measure space $\left(A, \Sigma, \mu \right)$. Then the \emph{Musielak-Orlicz space} is denoted by $L^{\phi}\left(A,\mu\right)$ is the collection of all measurable functions $f:A\to \mathbb{R}$ for which 
\begin{align*}
\lim_{\lambda\to 0} \int_A \phi \left(y, \lambda \lvert f(y) \rvert \right) \, d\mu(y) =  0 .
\end{align*}

\end{definition}
The Musielak-Orlicz spaces $L^{\phi}\left(A,\mu\right)$ are Banach spaces when equipped with the norm 
\begin{align}\label{MO norm}
\left\lVert f\right\rVert_{\phi}: = \inf \left \lbrace \lambda>0:  \int_A \phi \left(y, \lambda^{-1} \lvert f(y) \rvert \right) \, d\mu(y) \leq 1 \right \rbrace.
\end{align}
If $\mu$ is the Lebesgue measure, we write $L^{\phi}(A)$ to denote $L^{\phi}(A, \, dx)$. We refer \cite[Theorem 2.3.13]{Var-Exp-Spaces-Diening-etal} for a proof.  We next recall \cite[Theorem 2.8.1]{Var-Exp-Spaces-Diening-etal}. 
\begin{theorem} \label{musileak embedding}
  Let $\left(A, \Sigma, \mu \right)$ be a $\sigma-$finite, complete measure space and $\phi$ and $\psi$ are generalized $\Phi$-function. Then $L^{\phi}\left(A, \mu\right) \hookrightarrow L^{\psi}\left(A, \mu\right)$ if and only if there exists $\lambda > 0$  and $f \in L^1\left(A,\mu \right)$ with $\left\lVert f \right\rVert_{1} \le 1$ such that,
    \begin{align}\label{m embed}
        \psi \left(x,\lambda^{-1} t\right) \le \phi\left(x,t\right) + f(x) \quad \text{ for a.e.} \quad x\in \Omega \quad \text{and all} \quad t \ge 0.
    \end{align}
\end{theorem}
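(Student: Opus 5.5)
The plan is to prove the two implications separately: sufficiency by a direct convexity estimate, and necessity by building $f$ as the pointwise excess of $\psi$ over $\phi$ and controlling its integral through the embedding constant.

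\emph{Sufficiency.} Assume \eqref{m embed} holds, and let $g\in L^{\phi}(A,\mu)$ with $\lVert g\rVert_{\phi}\le 1$.
\begin{itemize}
\item By left continuity of $\phi(x,\cdot)$ and monotone convergence, the infimum in \eqref{MO norm} is attained. This is the unit ball property: $\int_A \phi(x,|g|)\,d\mu\le 1$.
\item Applying \eqref{m embed} with $t=|g(x)|$ gives $\int_A\psi(x,|g|/\lambda)\,d\mu\le 1+\lVert f\rVert_1\le 2$.
\item Since $\psi(x,\cdot)$ is convex with $\psi(x,0)=0$, we have $\psi(x,s/2)\le \tfrac12\psi(x,s)$. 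Hence $\int_A\psi(x,|g|/(2\lambda))\,d\mu\le 1$, so $\lVert g\rVert_{\psi}\le 2\lambda$.
\end{itemize}
By homogeneity, $\lVert g\rVert_\psi\le 2\lambda\lVert g\rVert_\phi$ for all $g$, which is the embedding.

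\emph{Necessity.} Assume $\lVert g\rVert_\psi\le K\lVert g\rVert_\phi$ for all $g$; if only the inclusion is given, $K$ comes from the closed graph theorem. Set $\lambda=2K$ and
\begin{align*}
f(x):=\sup_{t\ge 0}\bigl(\psi(x,t/\lambda)-\phi(x,t)\bigr)_+ .
\end{align*}
Left continuity in $t$ lets us restrict the supremum to rational $t$, so $f$ is measurable. Inequality \eqref{m embed} then holds by construction, and it remains to show $\lVert f\rVert_1\le 1$.
\begin{itemize}
\item By monotone convergence and $\sigma$-finiteness, it suffices to bound $\int_E\bigl(\psi(x,t(x)/\lambda)-\phi(x,t(x))\bigr)\,d\mu$. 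Here $E$ has finite measure, and $t(\cdot)$ is a measurable simple function with rational values that nearly realizes the supremum on $E$ (a countable-rational selection). We may also assume $\psi(x,t/\lambda)>\phi(x,t)$ on $E$ and $\int_E\phi(x,t)\,d\mu<\infty$.
\item Partition $E$ into sets $E_1,\dots,E_k$ with $\tfrac12\le\int_{E_i}\phi(x,t)\,d\mu\le 1$ for $i<k$ and $\int_{E_k}\phi(x,t)\,d\mu\le 1$.
\item For each $i$, the function $g_i=t\chi_{E_i}$ has $\lVert g_i\rVert_\phi\le1$, so $\lVert g_i\rVert_\psi\le K$ and therefore $\int_{E_i}\psi(x,t/K)\,d\mu\le 1$.
\item Convexity gives $\psi(x,t/\lambda)\le\tfrac12\psi(x,t/K)$. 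Hence $\int_{E_i}\psi(x,t/\lambda)\,d\mu\le \tfrac12\le\int_{E_i}\phi(x,t)\,d\mu$ for $i<k$, and $\int_{E_k}\psi(x,t/\lambda)\,d\mu\le\tfrac12$.
\item Summing over $i$ gives $\int_E(\psi-\phi)\,d\mu\le\tfrac12$, hence $\lVert f\rVert_1\le\tfrac12\le1$.
\end{itemize}

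\emph{Main obstacle.} The delicate step is the partition when $\mu$ has atoms. On an atom $a$ one cannot split mass, so $\phi(a,t(a))\mu(\{a\})$ may exceed $1$. I would handle atoms separately by replacing $t(a)$ with a smaller rational value: since $\phi(a,\cdot)$ is finite and continuous from the left, one can arrange $\phi(a,t(a))\mu(\{a\})\le 1$ while losing only an arbitrarily small amount of $\psi-\phi$. Then group the atoms greedily, and split the non-atomic part using the intermediate value property of $\mu$. For the application in this paper, $\mu$ is Lebesgue measure, which is non-atomic, so this subtlety does not arise.
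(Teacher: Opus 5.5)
Your sufficiency argument is correct for an arbitrary $\sigma$-finite measure. Your necessity argument is also correct when $\mu$ is atomless: the supremum over rational $t$, the sets $E_i$ with $\frac12\le\int_{E_i}\phi(x,t(x))\,d\mu\le 1$, and the convexity step $\psi(x,t/(2K))\le\frac12\psi(x,t/K)$ all go through.

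The genuine gap is the paragraph on atoms, and it cannot be repaired, because the ``only if'' direction is false as soon as $\mu$ has an atom. Take $A=\{a\}$ with $\mu(\{a\})=1$, $\phi(x,t)=t$ and $\psi(x,t)=t^2$. Then $L^\phi(A,\mu)=L^\psi(A,\mu)=\mathbb{R}$ with identical norms, so the embedding holds. Yet $\sup_{t\ge0}\bigl(t^2/\lambda^2-t\bigr)=\infty$ for every $\lambda>0$, so no finite $f(a)$ can satisfy \eqref{m embed}. Counting measure on $\mathbb{N}$ with $t^p$ and $t^q$, $p<q$, shows the same failure for $\ell^p\hookrightarrow\ell^q$.

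The step that breaks is your claim that capping $t(a)$ so that $\phi(a,t(a))\mu(\{a\})\le 1$ loses ``only an arbitrarily small amount of $\psi-\phi$''. Left continuity lets you approximate a given $t$ from below. It does not let you push $t$ below the threshold $\phi(a,t)\mu(\{a\})=1$ without loss. The excess $\psi(a,t/\lambda)-\phi(a,t)$ can live entirely above that threshold, where $t\chi_{\{a\}}$ has left the unit ball of $L^\phi$ and the embedding gives no information. Moreover, a bound on the capped supremum would not yield \eqref{m embed} for all $t\ge 0$ anyway.

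So the necessity part requires the hypothesis that $\mu$ is atomless. This is how the result is stated in the source the paper quotes: the paper gives no proof of its own and simply recalls \cite[Theorem 2.8.1]{Var-Exp-Spaces-Diening-etal}, dropping that hypothesis in transcription. With $\mu$ atomless, your argument is essentially the standard one. The only ingredient to make explicit is that the finite measure $\nu(B)=\int_B\phi(x,t(x))\,d\mu$ is atomless (being absolutely continuous with respect to $\mu$), so Sierpi\'nski's intermediate-value property gives your partition $E_1,\dots,E_k$. Note also that the paper only uses the ``if'' direction, with Lebesgue measure, in Proposition \ref{M-O embedding}. Your sufficiency proof therefore already covers everything needed downstream.
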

This result is useful in proving the next Proposition.
\begin{proposition}\label{M-O embedding}
    Let $N \ge 2$, $0<\beta<1 $ and $\Omega$ be any domain in $\mathbb{R}^N$ containing the origin. Then we have the following embeddings for $i=1,2$:
    \begin{enumerate}[(i)]
        \item $L^{\psi_{N1}}(\Omega) \hookrightarrow L^{\phi_N}(\Omega)$,
        \item $L^{\psi_{N\beta}}(\Omega) \hookrightarrow L^{\phi_{w_{i\beta}}}(\Omega)$,
    \end{enumerate}
    where the functions are defined for $x \in \Omega$ and $t \ge 0$ as follows:
   \begin{align*} 
    \phi_N(x, t) &= \exp\left( t^{N'} X_2 X_1^{-1}(x)\right) - 1, & \phi_{w_{i\beta}}(x, t) &= \exp\left(t^{N'} w_{i\beta}^{\frac{1}{N-1}}(x)\right) - 1, \\ 
    \psi_{N1}(t) &= \exp\left(N e^{t^{N'}} - N \right) - 1, & \psi_{N\beta}(t) &= \exp\left(t^{\frac{N'}{1-\beta}}\right) - 1. 
\end{align*}
\end{proposition}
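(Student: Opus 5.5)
The plan is to verify, for each pair, the pointwise criterion of Theorem \ref{musileak embedding}. Since the source space is now $L^{\psi}$ and the target is $L^{\phi}$, the inequality to establish is
\[
\phi\big(x,\lambda^{-1}t\big)\le \psi(t)+f(x)\qquad\text{for a.e. } x\in\Omega,\ t\ge 0,
\]
for some $\lambda>0$ and $f\in L^1(\Omega)$. The normalization $\lVert f\rVert_1\le 1$ is harmless. By convexity and $\phi(x,0)=0$ we have $\phi(x,t/k)\le k^{-1}\phi(x,t)$ for $k\ge1$. So if the inequality holds with some $f\in L^1$, then replacing $\lambda$ by $k\lambda$ with $k\ge \max\{1,\lVert f\rVert_1\}$ gives it with $f/k$. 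Outside $B(0,R_\Omega)$ the weights equal $1$, so there the domination is elementary (e.g. $e^{s}-1\le e^{N(e^s-1)}-1$). I therefore only work on $B(0,R_\Omega)$, where I write $r=|x|/R_\Omega$ and $L:=X_1^{-1}(r)=1+\ln(1/r)\ge1$. In both cases the integrable remainder will be of the form $f=e^{cL}=(e/r)^c$ with $c<N$, which lies in $L^1(B(0,R_\Omega))$.

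\emph{Case (ii).} Here $w_{i\beta}^{1/(N-1)}$ equals $L^\beta$ for $i=2$, and $(\ln(1/r))^{\beta}\le L^\beta$ for $i=1$, so it suffices to treat $L^\beta$. Put $s=t^{N'}$ and $\mu=\lambda^{N'}$; we must bound $\exp(L^\beta s/\mu)$. If $s^{1/(1-\beta)}\ge L^\beta s/\mu$, the left side is at most $\psi_{N\beta}(t)+1$; absorbing the $1$ into $f$ is allowed because $\Omega$ is bounded in the relevant region. Otherwise $s^{\beta/(1-\beta)}<L^\beta/\mu$, i.e. $s<L^{1-\beta}\mu^{-(1-\beta)/\beta}$. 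This gives $L^\beta s/\mu<\mu^{-1/\beta}L$, so $\exp(L^\beta s/\mu)\le (e/r)^{\mu^{-1/\beta}}$. Choosing $\mu$ so large that $\mu^{-1/\beta}<N$ makes this integrable.

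\emph{Case (i).} Here the weight is $a:=X_2X_1^{-1}=L/(1+\ln L)$, and we must bound $\exp(as/\mu)$ by $\exp(Ne^{s}-N)$ plus an integrable $f$. The favourable case is again trivial. In the complementary case $N(e^{s}-1)<as/\mu$, so $s<\ln(1+as/(N\mu))$, and iterating once gives $s\le \ln a+O(\ln\ln(a+e))$. Hence
\[
\frac{as}{\mu}\le \frac{L}{\mu}\cdot\frac{\ln L+O(\ln\ln(L+e))}{1+\ln L}\le \frac{2L}{\mu}+C .
\]
The factor $\ln a\approx\ln L$ is exactly cancelled by $X_2=(1+\ln L)^{-1}$, and this cancellation is the whole point of the construction. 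Then $f=C'(e/r)^{2/\mu}$ is integrable once $\mu>2/N$.

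The main obstacle is this step in (i): turning the implicit bound $s<\ln(1+as/(N\mu))$ into the clean estimate $s\le(1+o(1))\ln L$ uniformly in $L\ge1$. I would handle it by splitting into $L\le L_0$, where everything is bounded, and $L>L_0$, where the iteration is legitimate and the lower-order terms can be absorbed into a constant multiple of $L$. Finally I would note that neither space coincides with the other; this is not required for the embedding itself.
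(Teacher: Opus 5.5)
Your argument is correct. It uses the same criterion (Theorem~\ref{musileak embedding}) as the paper, but it produces the pointwise domination differently.

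\emph{How the paper proceeds.} The paper never splits into cases.
\begin{itemize}
\item For (ii) it applies the weighted AM--GM inequality $t^{N'}X_1^{-\beta}\le(1-\beta)t^{\frac{N'}{1-\beta}}+\beta X_1^{-1}$ and then uses convexity of the exponential.
\item For (i) it uses the Fenchel--Young inequality for the conjugate pair $e^{a}-a-1$ and $(1+b)\ln(1+b)-b$, with $b=X_2X_1^{-1}$. It then applies Young's inequality with exponents $N,N'$ to split the product $\exp[\lambda^{-N'}(e^{t^{N'}}-1)]\cdot\exp[\lambda^{-N'}(1+b)\ln(1+b)]$.
\end{itemize}

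\emph{What each route buys.} The paper's route gives the remainder $f$ in closed form for all $x$ at once. The step you flag as the main obstacle, inverting $e^{s}-1<\frac{a}{N\mu}s$ uniformly in $L$, disappears there. The conjugate $(1+b)\ln(1+b)$ already encodes $s\approx\ln b$. The cancellation you identify then reduces to the one-line limit $X_2\ln(2X_2X_1^{-1})\to 1$, which gives $f\lesssim e^{3N'\lambda^{-N'}X_1^{-1}}$ near the origin.

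Your dichotomy (either $\psi$ dominates outright, or $t$ is bounded by an explicit function of $x$) is more elementary. It does not require finding the right conjugate pair, at the cost of the asymptotic bookkeeping in (i). Your normalization via $\phi(x,t/k)\le k^{-1}\phi(x,t)$ is also cleaner than the paper's informal choice of $\lambda$ large to force $\lVert f\rVert_1\le 1$.

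\emph{A minor point.} Your remark about the region outside $B(0,R_\Omega)$ is vacuous, since $\Omega\subset\overline{B(0,R_\Omega)}$ by the definition of $R_\Omega$. For (ii), that domination would in any case need an additive constant in $f$.
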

\begin{proof}
We will use \Cref{musileak embedding} to establish the results.  
    \par \emph{Proof of (i)}: Let $\lambda \geq 1$. By Young's inequality we have,
        $$ab \le e^a - a -1 + (1+b)\ln(1+b) -b, \mbox{ for any } a, b\ge 0.$$ For $x\in \Omega$ and $t\geq 0$, we estimate using this inequality
        \begin{align*}
            \phi_{N}(x, \lambda^{-1} t) &= e^{ t^{N'} \lambda^{-N'} X_2 X_1^{-1}} -1\\
            &\le \exp \left[ \lambda^{-N'} \left(e^{ t^{N'}} -1  + \left(1+ X_2 X_1^{-1}\right)\ln \left(1+ X_2 X_1^{-1}\right)\right)\right] -1\\
            &=\exp \left[\lambda^{-N'} \left(e^{ t^{N'}} -1\right)\right] \exp \left[\lambda^{-N'} \left(1+ X_2 X_1^{-1}\right)\ln \left(1+ X_2 X_1^{-1}\right)\right]-1\\
            &\le \frac{\exp \left[\lambda^{-N'} \left(Ne^{ t^{N'}} -N\right)\right]-1}{N} \\
            & \qquad \qquad + \frac{ \exp \left[\lambda^{-N'} N'\left(1+ X_2 X_1^{-1}\right)\ln \left(1+ X_2 X_1^{-1}\right)\right]-1}{N'}\\
            &\le \psi_{N1}(t) + f(x).
        \end{align*}
Now, we shall choose $\lambda$ large to conclude $\left\lVert f \right\rVert_1\leq 1$, which would establish \eqref{m embed} and thus conclude the proof of $(i)$. 
\par Since $X_2 X_1^{-1} \ge 1,$ so $\left(1+ X_2 X_1^{-1}\right)\ln \left(1+ X_2 X_1^{-1}\right) \le 2 X_2 X_1^{-1}\ln \left(2 X_2 X_1^{-1}\right)$. Additionally, $X_2 \ln \left(2 X_2 X_1^{-1}\right) \to 1$ as $x  \to 0$. Thus there exists $\delta > 0$ such that $2X_2 X_1^{-1}\ln \left(2 X_2 X_1^{-1}\right) \le 3X_1^{-1}$ in $ B(0,\delta).$ Therefore, we have
         \begin{align*}
            \int_{\Omega} f(x) \, dx &\leq  \int_{B\left(0,R_{\Omega}\right)} \left(\exp \left[\lambda^{-N'} N'\left(1+ X_2 X_1^{-1}\right)\ln \left(1+ X_2 X_1^{-1}\right)\right] -1 \right) \\
             &\le  \int_{B\left(0,R_{\Omega}\right)} \left(\exp \left(\lambda^{-N'} 2 N' X_2 X_1^{-1}\ln \left(2 X_2 X_1^{-1}\right) \right)-1\right)\\
             &\le \int_{B(0,\delta)} \left(e^{\lambda^{-N'} N' 3 X_1^{-1}} -1\right) + \int_{\delta < \lvert x \rvert < R_{\Omega}}  \left(e^ {\lambda^{-N'} 2 N' \frac{X_2}{X_1}\ln \left(\frac{2 X_2}{X_1} \right)}-1\right).
         \end{align*}
         Thus choosing $\lambda > 1$ large enough yields the bound $\left\lVert f \right\rVert_1\leq 1$. 
        \par \emph{Proof of (ii)}: We conside $0<\beta < 1$ and $i=2$. The proof for $i=1$ is similar. For $a,b \ge 0,$ we will use the Young's inequality $a^{1-\beta} b ^{\beta} \le (1-\beta) a + \beta b.$  Let $\lambda \ge 1$. Then for $x\in \Omega$ and $t\geq 0$ we have
        \begin{align*}
             \phi_{w_{i\beta}}(x, \lambda^{-1}t) &= e^{\lambda^{-N'} t^{N'} X_1^{-\beta}}-1\\
            &\le \exp \left[ \lambda^{-N'} \left((1-\beta)t^{\frac{N'}{1-\beta}} +  \beta X_1^{-1}\right) \right] - 1\\
            &\le (1-\beta)\left(e^{t^{\frac{N'}{1-\beta}}}-1\right) + \beta \left(e^{ \lambda^{-N'} X_1^{-1}}-1\right)\\
            &\le \psi_{N\beta}(t) + f(x),
        \end{align*}
        again we take $\lambda > 1$ large enough such that $\|f\|_{1} \le 1$, which establishes \eqref{m embed}. This completes the proof. \\
\end{proof}

\subsection{Some useful inequalities}
We need the following radial estimates. We refer Lemma $5$ of \cite{RufClanchi2015} and \cite{RufN} for proofs.
\begin{lemma}\label{radial lemma}
    Let $0< \beta <\infty$, $N\geq 2$ and $u \in C^1_{c,\mathrm{rad}}(\mathbb{B}_N)$. Then the following point wise estimates hold.
    \begin{enumerate}[(i)]
        \item For any $x \in \mathbb{B}_N$ and $0<\beta<1$, 
        \begin{align*}
        \lvert u(x) \rvert \le \frac{\left( \ln \frac{1}{\lvert x \rvert} \right)^{\frac{1-\beta}{N'}}}{\omega_{N-1}^{1/N}(1-\beta)^{1/N'}} \left( \int_{\mathbb{B}_N} \lvert \nabla u \rvert^N Y_1^{-\beta(N-1)} \, dx \right)^{1/N}.
        \end{align*}
        
        \item For any $x \in \mathbb{B}_N$ and $\beta \neq 1$
 \begin{align*}       
        \lvert u(x) \rvert \le \frac{\left\lvert\left(\ln \frac{e}{\lvert x \rvert}\right)^{1-\beta}-1\right\rvert^{1/N'}}{\omega_{N-1}^{1/N}\lvert 1-\beta \rvert^{1/N'}} \left(\int_{\mathbb{B}_N} \lvert \nabla u \rvert^N X_1^{-\beta(N-1)} \, dx \right)^{1/N}.
        \end{align*}
   
   \item For $\beta =1$ and  $x \in \mathbb{B}_N$ 
        \begin{align*}
        \lvert u(x) \rvert \le \frac{1}{\omega_{N-1}^{1/N}} \left(\ln \ln \frac{e}{\lvert x \rvert}\right)^{1/N'} \left(\int_{\mathbb{B}_N} \lvert \nabla u \rvert^N X_1^{-N+1} \, dx \right)^{1/N}.
  \end{align*} 
    \end{enumerate}
\end{lemma}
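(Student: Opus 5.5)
Since $u\in C^1_{c,\mathrm{rad}}(\mathbb{B}_N)$, write $u(x)=v(r)$ with $r=\lvert x\rvert$ and $v(1)=0$. Then
\begin{align*}
\lvert v(r)\rvert=\Bigl\lvert\int_r^1 v'(s)\,ds\Bigr\rvert\le \int_r^1 \lvert v'(s)\rvert\, w(s)^{\frac1N} s^{\frac{N-1}{N}}\cdot w(s)^{-\frac1N}s^{-\frac{N-1}{N}}\,ds ,
\end{align*}
where $w$ is the weight ($Y_1^{-\beta(N-1)}$ in (i), $X_1^{-\beta(N-1)}$ in (ii) and (iii)). Applying Hölder's inequality with exponents $N$ and $N'$ gives
\begin{align*}
\lvert v(r)\rvert\le \Bigl(\int_r^1 \lvert v'\rvert^N w\, s^{N-1}\,ds\Bigr)^{\frac1N}\Bigl(\int_r^1 w(s)^{-\frac{1}{N-1}}\frac{ds}{s}\Bigr)^{\frac1{N'}} .
\end{align*}
In polar coordinates, $\int_{\mathbb{B}_N}\lvert\nabla u\rvert^N w\,dx=\omega_{N-1}\int_0^1\lvert v'\rvert^N w\,s^{N-1}ds$. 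The first factor is therefore bounded by $\omega_{N-1}^{-1/N}$ times the weighted energy to the power $1/N$, which produces the constant $\omega_{N-1}^{-1/N}$ in all three estimates.

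It remains to evaluate $\int_r^1 w^{-1/(N-1)}\,ds/s$ in each case, which is elementary after the substitution $t=\ln(1/s)$ or $t=\ln(e/s)$.
\begin{itemize}
\item[(i)] Here $w^{-1/(N-1)}=(\ln\frac1s)^{-\beta}$. With $t=\ln(1/s)$,
\begin{align*}
\int_r^1 (\ln\tfrac1s)^{-\beta}\,\tfrac{ds}{s}=\int_0^{\ln(1/r)} t^{-\beta}\,dt=\frac{(\ln\frac1r)^{1-\beta}}{1-\beta},
\end{align*}
which is finite precisely because $\beta<1$. Raising this to the power $1/N'$ gives the stated bound.
\item[(ii)] Here $w^{-1/(N-1)}=(\ln\frac es)^{-\beta}$. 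With $t=\ln(e/s)$, the integral becomes $\int_1^{\ln(e/r)}t^{-\beta}\,dt=\frac{(\ln\frac er)^{1-\beta}-1}{1-\beta}$. This quantity is positive for every $\beta\ne1$ and equals $\frac{\lvert(\ln\frac er)^{1-\beta}-1\rvert}{\lvert1-\beta\rvert}$.
\item[(iii)] For $\beta=1$ the same substitution gives $\int_1^{\ln(e/r)}t^{-1}\,dt=\ln\ln\frac er$.
\end{itemize}

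The argument has no real obstacle. The only points needing care are the following.
\begin{itemize}
\item The integrals converge at the lower limit $s=r>0$, since $w^{-1/(N-1)}$ is bounded on $[r,1)$; for (i), the singularity of $(\ln\frac1s)^{-\beta}$ as $s\to1$ is integrable because $\beta<1$.
\item In (ii) the absolute values are needed to handle $\beta>1$, where both the numerator and $1-\beta$ are negative.
\end{itemize}
For $x$ with $\lvert x\rvert$ near $0$ no further work is required, because the estimates hold pointwise for every $r\in(0,1)$. The bound at $x=0$ is trivial or infinite.
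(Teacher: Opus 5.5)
Your proof is correct. It is the standard argument behind Lemma 5 of Calanchi--Ruf, which the paper cites instead of reproving: the fundamental theorem of calculus from $r$ to $1$, H\"older's inequality with the split $w^{1/N}s^{(N-1)/N}\cdot w^{-1/N}s^{-(N-1)/N}$, and then explicit evaluation of $\int_r^1 w^{-1/(N-1)}\,\frac{ds}{s}$.

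One small correction concerns (ii) with $\beta>1$ at $x=0$. There the right-hand side is neither trivial nor infinite; it is the finite quantity $\omega_{N-1}^{-1/N}(\beta-1)^{-1/N'}\bigl(\int_{\mathbb{B}_N}\lvert\nabla u\rvert^N X_1^{-\beta(N-1)}\,dx\bigr)^{1/N}$. The bound still holds, either by letting $r\to 0$ and using continuity of $u$, or by running your H\"older step with $r=0$, since $\int_0^1(\ln\frac{e}{s})^{-\beta}\,\frac{ds}{s}=\frac{1}{\beta-1}<\infty$.
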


We also need the following Hardy inequality. See \cite[Lemma 2.1]{PsaradakisJFA} for a proof.
\begin{lemma} \label{psara}
     Let $N\geq 2$ and $\Omega$ be a bounded domain in $\mathbb{R}^N$ containing the origin. Then for all $\beta \neq 1$, $i=1, 2,$ and $u \in C^1_c(\Omega \setminus \{0\})$, we have the following inequality,
    \begin{align} \label{hardy type}
        \int_{\Omega} \frac{\lvert u \rvert ^N}{\lvert x \rvert^N} w_{i\beta}^{1-\frac{N^{\prime}} {\beta}} \, dx \le \left\lvert\frac{N'}{1- \beta}\right \rvert^{N} \int_{\Omega} \lvert \nabla u \rvert^N w_{i\beta} \, dx.
    \end{align}
\end{lemma}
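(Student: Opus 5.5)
The approach is a one-dimensional weighted Hardy inequality in the radial variable, integrated over the angular directions. Fix $u\in C^1_c(\Omega\setminus\{0\})$ and extend it by zero; we may assume $\Omega\subset B(0,R_\Omega)$. By scaling $x\mapsto x/R_\Omega$ we may also take $R_\Omega=1$, since both sides involve only the scale-invariant quantities $|x|^{-N}dx$ and $|\nabla u|^N dx$.

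Pass to polar coordinates $x=r\sigma$ and set $g(r)=u(r\sigma)$. Each $g$ is $C^1$ with compact support in $(0,1)$. By $|\partial_r u|\le|\nabla u|$, it suffices to prove the one-dimensional inequality
\[
\int_0^1 |g|^N\, W(r)^{1-\frac{N'}{\beta}}\, \frac{dr}{r} \le \Bigl|\tfrac{N'}{1-\beta}\Bigr|^N \int_0^1 |g'|^N W(r)\, r^{N-1}\,dr ,
\]
where $W$ is the weight. Then integrate over $\sigma\in S^{N-1}$.

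Change variables to $t=\ln(1/r)$ for $i=1$, or $t=\ln(e/r)$ for $i=2$. Then $dr/r=dt$ up to sign, $W=t^{\beta(N-1)}$, and $r|g'(r)|=|h'(t)|$ with $h(t)=g(r)$. The exponent on the left becomes $\beta(N-1)(1-N'/\beta)=\beta(N-1)-N$. The target is therefore
\[
\int |h|^N t^{\beta(N-1)-N}\,dt \le \Bigl|\tfrac{N'}{1-\beta}\Bigr|^N\int |h'|^N t^{\beta(N-1)}\,dt .
\]
This is a classical Hardy inequality $\int |h|^p t^{a-p}\le |p/(a-p+1)|^p\int |h'|^p t^a$ with $p=N$ and $a=\beta(N-1)$. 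Here $a-p+1=(N-1)(\beta-1)$, so the constant is $(N/((N-1)|\beta-1|))^N=|N'/(1-\beta)|^N$, matching the claim. The condition $\beta\neq1$ is exactly $a\ne p-1$.

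To prove the Hardy inequality, write $\frac{d}{dt}t^{a-p+1}=(a-p+1)t^{a-p}$ and integrate by parts against $|h|^N$. The boundary terms vanish because $h$ has compact support in the open $t$-interval. For $i=2$ the range is $t\in(1,\infty)$, and the support of $h$ stays away from $t=1$ since $u$ has compact support in $B(0,1)$. This gives
\[
|a-p+1|\int |h|^N t^{a-p} \le N\int |h|^{N-1}|h'|\,t^{a-p+1}.
\]
Apply Hölder with exponents $N'$ and $N$, splitting $t^{a-p+1}=t^{(a-p)/N'}t^{a/N}$, and absorb the left factor; the lower-order terms are finite since $h$ is compactly supported. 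The only delicate point is checking that exponent bookkeeping in the change of variables, which is routine.
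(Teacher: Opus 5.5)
Your proof is correct and complete. The reduction via $|\partial_r u|\le|\nabla u|$ is sound, and so is the substitution $t=\ln(R_\Omega/r)$ for $i=1$ and $t=\ln(eR_\Omega/r)$ for $i=2$, which turns the problem into the one-dimensional power-weight Hardy inequality with $p=N$, $a=\beta(N-1)$. The integration by parts followed by H\"older also checks out, including the constant $\bigl(N/|a-p+1|\bigr)^N=|N'/(1-\beta)|^N$. You also correctly noted that $\mathrm{supp}\,u\Subset B(0,R_\Omega)\setminus\{0\}$ kills every boundary term. This matters at $t=1$ for $i=2$, and at $t=0$ for $i=1$ when $\beta(N-1)-N<0$.

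The paper gives no argument of its own for this lemma; it simply defers to \cite[Lemma 2.1]{PsaradakisJFA}. A common direct proof of such logarithmic Hardy inequalities avoids polar coordinates. Set $\gamma=(N-1)(1-\beta)$. For $x\neq 0$ one has $\mathrm{div}\bigl(x|x|^{-N}X_1^{\gamma}\bigr)=\gamma|x|^{-N}X_1^{\gamma+1}$, and likewise with $Y_1$. Integrating $|u|^N$ against this divergence and applying H\"older (which uses only $|\nabla u\cdot x|/|x|\le|\nabla u|$) gives the inequality with the same constant $(N/|\gamma|)^N$.

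That argument is exactly your computation written in $\mathbb{R}^N$, so the two routes are equivalent. The vector-field version is shorter to write down. Yours makes the sharp constant, and the reason for excluding $\beta=1$ (namely $a\neq p-1$, i.e.\ $\gamma\neq 0$), immediately recognizable from the classical one-dimensional Hardy inequality.
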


\section{Properties of Weighted Sobolev spaces}\label{prop w-sob spaces}

In this section we prove some properties of the Weighted Sobolev spaces with the weights defined in \eqref{weights w}. Throughout this section, we assume $N\geq 2$ and $\Omega$ is an open bounded domain in $\mathbb{R}^N$. 

\begin{lemma}\label{contains lip functions}
    Let, $0<\beta <1$, if $i=1$ and $0<\beta <\infty$, if $i=2$. Then  $ W^{1,N}_{0}(\Omega, w_{i\beta})$ contains all locally Lipschitz functions vanishing at the boundary. 
\end{lemma}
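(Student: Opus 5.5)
Let $u$ be locally Lipschitz on $\Omega$ with $u=0$ on $\partial\Omega$. I read this as: $u$ extends continuously by zero to $\partial\Omega$, and $\nabla u\in L^N(\Omega,w_{i\beta})$ (for instance because $u$ is globally Lipschitz, or $\nabla u$ is bounded). We must show that $u$ is a limit of $C^1_c(\Omega)$ functions in the norm \eqref{weighted norms}. The plan is a truncation away from the boundary followed by mollification. The weighted Poincaré-type control of $u$ itself is not needed, because the space is defined as a closure under the gradient seminorm. On a bounded domain this seminorm is a genuine norm, via the unweighted Sobolev inequality together with $w_{i\beta}\ge c>0$.

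\emph{Step 1 (truncation).} For $\varepsilon>0$ set $u_\varepsilon=\operatorname{sign}(u)\,(|u|-\varepsilon)_+$. Since $u$ vanishes continuously at $\partial\Omega$, the set $\{|u|\ge\varepsilon\}$ is compact in $\Omega$. Hence $u_\varepsilon$ is Lipschitz with compact support. Moreover $\nabla u_\varepsilon=\nabla u\,\chi_{\{|u|>\varepsilon\}}$ a.e. Therefore
\[
\int_\Omega|\nabla(u-u_\varepsilon)|^N w_{i\beta}\,dx=\int_{\{0<|u|\le\varepsilon\}}|\nabla u|^N w_{i\beta}\,dx ,
\]
where we used $\nabla u=0$ a.e. on $\{u=0\}$. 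This tends to $0$ by dominated convergence, since $|\nabla u|^Nw_{i\beta}\in L^1$.

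\emph{Step 2 (mollification).} Fix $\varepsilon$ and put $v=u_\varepsilon$, a compactly supported Lipschitz function. Let $v_\delta=v*\rho_\delta$, which lies in $C^\infty_c(\Omega)$ for small $\delta$. Then $\nabla v_\delta=(\nabla v)*\rho_\delta$ with $\nabla v\in L^\infty$ of compact support. We need $\int|\nabla v_\delta-\nabla v|^N w_{i\beta}\to0$. Since $|\nabla v_\delta-\nabla v|\le 2\|\nabla v\|_\infty$, the integrand converges to $0$ a.e. and is dominated by $2^N\|\nabla v\|_\infty^N w_{i\beta}\chi_K$ on a fixed compact set $K$. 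The only requirement is $w_{i\beta}\in L^1_{loc}$. This holds because $w_{i\beta}$ is a power of $\ln(R_\Omega/|x|)$ (or of $\ln(eR_\Omega/|x|)$), which is integrable near $0$ for every $\beta$. It is also bounded near $|x|=R_\Omega$ for $w_{2\beta}$. For $w_{1\beta}$ with $\beta<1$, the singularity $(\ln(R_\Omega/|x|))^{\beta(N-1)}$ vanishes rather than blows up at $|x|=R_\Omega$, so it is locally bounded there. Alternatively, one may simply invoke $w_{i\beta}\in A_N\subset L^1_{loc}$ from Lemma \ref{ap weights lem}.

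\emph{Main obstacle.} The only delicate point is the meaning of "vanishing at the boundary" and of the finiteness of the weighted energy. A locally Lipschitz $u$ might have $\nabla u$ unbounded near $\partial\Omega$, so I would state the hypothesis as $\|\nabla u\|_{N,w_{i\beta}}<\infty$. I would also note that the origin causes no difficulty: the weight is merely integrable there, not singular in the $A_N$ sense. This is exactly why the restrictions $\beta<1$ for $i=1$, and $\beta<\infty$ for $i=2$, suffice. Combining Steps 1 and 2 with a diagonal choice of $\delta(\varepsilon)$ gives $u\in W^{1,N}_0(\Omega,w_{i\beta})$.
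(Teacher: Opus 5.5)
Your proof is correct, and it takes a more elementary route than the paper. The paper goes through the admissible-weight machinery. First, $w_{i\beta}\in A_N$ (Lemma \ref{ap weights lem}) makes the weight $N$-admissible \cite[Theorem 15.21]{Heinonen}. Kilpel\"ainen's $H=W$ theorem \cite[Theorem 2.5]{Kilpelainen-Weighted-Sob} then places $u$ in $H^{1,N}(\Omega,w_{i\beta})$. Finally, \cite[Lemma 1.26]{Heinonen} turns vanishing on $\partial\Omega$ into membership in the zero-trace space.

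Your truncation $\operatorname{sign}(u)(|u|-\varepsilon)_+$ followed by mollification replaces both citations. For Lipschitz data you only need $w_{i\beta}\in L^1_{loc}$ and $|\nabla u|^N w_{i\beta}\in L^1$. Neither the $A_N$ property nor any restriction on $\beta$ enters the approximation itself. The abstract route, in exchange, gives the statement for all of $H^{1,N}$ and fits the framework the paper reuses for Corollary \ref{lattice} and Theorem \ref{approximation}.

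Your insistence on the hypothesis $\|\nabla u\|_{N,w_{i\beta}}<\infty$ is well taken. The paper's step ``$w_{i\beta}\in L^1(\Omega)$, hence $\|u\|_{1,N,w_{i\beta},\Omega}<\infty$'' tacitly assumes $\nabla u$ is bounded. Yet the lemma is later applied to $\xi_\varepsilon$ from \eqref{broken-line}, whose gradient blows up like $(\ln\frac1r)^{-\beta}$ as $r\to1$ while its weighted energy equals $1$. Your Step 1 covers exactly this case.

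There is one slip. The claim $w_{i\beta}\ge c>0$ is false for $i=1$: $w_{1\beta}=(\ln\frac{R_\Omega}{|x|})^{\beta(N-1)}\to0$ as $|x|\to R_\Omega$, as you yourself observe in Step 2. The claim is not needed for the lemma. If you want to see that the completion is a genuine function space when $i=1$, use H\"older instead:
$$\int_\Omega|\nabla\phi|\,dx\le\|\nabla\phi\|_{N,w_{1\beta}}\bigl(\int_\Omega w_{1\beta}^{-1/(N-1)}dx\bigr)^{1/N'},$$
where $w_{1\beta}^{-1/(N-1)}=(\ln\frac{R_\Omega}{|x|})^{-\beta}$ is integrable on $B(0,R_\Omega)$ because $\beta<1$. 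This is the computation in Lemma \ref{ap weights lem}(iii) run in reverse. So the boundary behaviour near $|x|=R_\Omega$, not anything at the origin, is where $\beta<1$ matters for your argument. Your closing remark attributing the restrictions on $\beta$ to the origin should be revised accordingly.
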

\begin{proof}
For $\phi \in C_c^\infty(\Omega)$ we denote 
\begin{align*}
     \left \lVert \phi \right\rVert_{1,N,w_{i\beta},\Omega} \coloneqq  \left(\int_{\Omega} |\phi|^p w_{i\beta} \, dx \right)^{\frac{1}{p}} + \left(\int_{\Omega} \left \lvert \nabla \phi \right \rvert^p w_{i\beta} dx \right)^{\frac{1}{p}}  .
\end{align*}
Now, as in \cite{Kilpelainen-Weighted-Sob}, we define the function space $H^{1,N}(\Omega, w_{i\beta})$ as the completion of the set  $\lbrace \phi\in C^{\infty}(\Omega): \left \lVert \phi\right\rVert_{1,N,w_{i\beta},\Omega} <\infty \rbrace$ under the norm $\left \lVert \cdot \right\rVert_{1,N,w_{i\beta},\Omega}$. 
\par By Lemma \ref{ap weights lem}, $w_{i\beta} \in A_{N}$.  It then follows from \cite[Theorem 15.21]{Heinonen} that $w_{i\beta}$ is $N$-admissible (See \cite[Section 1.1]{Heinonen} for the definition) weights. Now let $u$ be any locally Lipschitz function which vanishes at the boundary. Since $w_{i\beta} \in L^1(\Omega)$ so,  $\left \lVert u \right\rVert_{1,N,w_{i\beta},\Omega} < \infty.$ Then by \cite[Theorem 2.5]{Kilpelainen-Weighted-Sob} it follows that, $u \in  H^{1,N}(\Omega, w_{i\beta}) .$ Now since $u$ vanishes on the boundary, so using \cite[Lemma 1.26]{Heinonen} we conclude the $u \in W^{1,N}_{0}\left(\Omega, w_{i\beta}\right)$. This completes the proof. \\
\end{proof}

Next we recall, \cite[Lemma 1.23]{Heinonen}. 

\begin{lemma}
  Let, $0<\beta <1$, if $i=1$ and $0<\beta <\infty$, if $i=2$.  Let $u,v \in W^{1,N}_{0}\left(\Omega, w_{i\beta}\right)$, then $\max \{u,v\}$ and $\min \{u,v \}$ are also in $W^{1,N}_{0}\left(\Omega, w_{i\beta}\right)$. Moreover, if $u \in W^{1,N}_{0}\left(\Omega, w_{i\beta}\right)$ non-negative then there exists a sequence of non-negative functions $u_m \in C^{\infty}_c\left(\Omega\right)$ such that $u_m \to u$ in $W^{1,N}_{0}\left(\Omega, w_{i\beta}\right)$ as $m\to \infty$.
\end{lemma}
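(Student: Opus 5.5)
The plan is to work directly from the definition of $W^{1,N}_{0}(\Omega, w_{i\beta})$ as the closure of $C^1_c(\Omega)$ (equivalently $C^\infty_c(\Omega)$) in the weighted norm. We use the fact, recorded in Lemma \ref{ap weights lem}, that $w_{i\beta}\in A_N$ in the stated ranges. By \cite[Theorem 15.21]{Heinonen} such weights are $N$-admissible, so the general lattice and approximation theory of \cite[Chapter 1]{Heinonen} applies. The statement is precisely \cite[Lemma 1.23]{Heinonen} specialized to our weights, so the main task is to check its hypotheses; I would nonetheless sketch why it holds.

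For the lattice property, note first that $\max\{u,v\}=v+(u-v)^+$ and $\min\{u,v\}=u-(u-v)^+$. It therefore suffices to show that $w\in W^{1,N}_{0}(\Omega,w_{i\beta})$ implies $w^+\in W^{1,N}_{0}(\Omega,w_{i\beta})$.
\begin{enumerate}[(1)]
\item Take $\phi_k\in C^\infty_c(\Omega)$ with $\phi_k\to w$ in the weighted norm.
\item Each $\phi_k^+$ is Lipschitz with compact support in $\Omega$, so Lemma \ref{contains lip functions} gives $\phi_k^+\in W^{1,N}_{0}(\Omega,w_{i\beta})$. This can also be seen directly by mollifying: $\phi_k^+$ has compact support and $w_{i\beta}\in L^1_{loc}$, with the gradients converging boundedly a.e.
\item The sequence $\phi_k^+$ is bounded in the weighted norm, since $|\nabla\phi_k^+|\le|\nabla\phi_k|$ and $|\phi_k^+|\le|\phi_k|$.
\item The weighted space is reflexive (uniformly convex, as a closed subspace of a weighted $L^N$ product). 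Hence a subsequence converges weakly to some limit in $W^{1,N}_{0}(\Omega,w_{i\beta})$.
\item Since $\phi_k^+\to w^+$ in $L^N(w_{i\beta})$, this weak limit must be $w^+$.
\end{enumerate}
Identifying the gradient of the limit uses $N$-admissibility: it guarantees that gradients in the weighted space are well defined, i.e. weighted Sobolev limits have unique gradients. This is the point where the $A_N$ condition is really needed, and I expect it to be the only delicate step. I would cite \cite[Section 1.9 and Lemma 1.11]{Heinonen} for it rather than reprove it.

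For the approximation by nonnegative smooth functions, let $u\ge0$ and take $\phi_k\in C^\infty_c(\Omega)$ with $\phi_k\to u$. Then $\phi_k^+\to u^+=u$. Norm convergence, not merely weak convergence, follows from:
\begin{itemize}
\item $|\phi_k^+-u|\le|\phi_k-u|$ pointwise;
\item the gradient estimate $|\nabla\phi_k^+-\nabla u|\le|\nabla\phi_k-\nabla u|+|\nabla u|\chi_{\{\phi_k\le 0<u\}\cup\{\phi_k>0=u\}}$;
\item dominated convergence along an a.e. convergent subsequence, using that $\nabla u=0$ a.e. on $\{u=0\}$.
\end{itemize}
Finally, mollify each $\phi_k^+$, which is compactly supported, Lipschitz and nonnegative. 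Its mollifications are nonnegative, lie in $C^\infty_c(\Omega)$, and converge to $\phi_k^+$ in the weighted norm because $w_{i\beta}$ is locally integrable and the gradients converge boundedly a.e. A diagonal argument then yields the desired nonnegative sequence $u_m\in C^\infty_c(\Omega)$.
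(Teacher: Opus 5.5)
Your proposal is correct and follows the paper. The paper simply recalls this statement as \cite[Lemma 1.23]{Heinonen}; it applies because $w_{i\beta}\in A_N$ by Lemma~\ref{ap weights lem}, hence $N$-admissible by \cite[Theorem 15.21]{Heinonen}, which is exactly the hypothesis check you identify. Your additional sketch (truncation to $\phi_k^+$, weak compactness in the reflexive space, and the dominated-convergence argument using $\nabla u=0$ a.e.\ on $\{u=0\}$) is a sound outline of the standard argument behind that lemma, which the paper does not reproduce.
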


\begin{corollary} \label{lattice}
  Let, $0<\beta <1$, if $i=1$ and $0<\beta <\infty$, if $i=2$. Then $u \in W^{1,N}_{0}\left(\Omega, w_{i\beta}\right)$ implies $\lvert u \rvert \in W^{1,N}_{0}\left(\Omega, w_{i\beta}\right).$
\end{corollary}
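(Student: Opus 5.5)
The corollary should follow from the preceding lemma by writing $|u|$ as a lattice combination of $u$, $-u$ and $0$. Let $u \in W^{1,N}_{0}\left(\Omega, w_{i\beta}\right)$. Since $W^{1,N}_{0}\left(\Omega, w_{i\beta}\right)$ is a vector space (the closure of $C^1_c(\Omega)$ under a norm), we have $-u \in W^{1,N}_{0}\left(\Omega, w_{i\beta}\right)$. The preceding lemma, which is \cite[Lemma 1.23]{Heinonen} and applies because $w_{i\beta}\in A_N$ by Lemma \ref{ap weights lem}, then gives
\begin{align*}
\max\{u,-u\} \in W^{1,N}_{0}\left(\Omega, w_{i\beta}\right).
\end{align*}
Pointwise a.e. we have $\max\{u,-u\} = \lvert u \rvert$, so $\lvert u\rvert \in W^{1,N}_{0}\left(\Omega, w_{i\beta}\right)$.

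An equivalent route is to write $\lvert u\rvert = u^+ + u^-$ with $u^+=\max\{u,0\}$ and $u^- = \max\{-u,0\}$. Each term belongs to the space by the lemma, since $0$ is trivially an element, and the sum belongs to it by linearity.

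The only point needing care is identification. The space is defined as an abstract completion, so we must check that its elements are genuine a.e.-defined functions. The lattice operations in \cite{Heinonen} are taken in this sense, and by \cite{Heinonen} the weighted gradient of $\max\{u,-u\}$ equals $\mathrm{sgn}(u)\nabla u$ a.e. Consequently $\left\lVert \nabla \lvert u\rvert\right\rVert_{N,w_{i\beta}} = \left\lVert \nabla u \right\rVert_{N,w_{i\beta}}$, which is the form in which the corollary will later be used: it lets us restrict the suprema to nonnegative $u$. I do not expect any real obstacle here. The $A_N$ membership needed to invoke \cite{Heinonen} is exactly why the hypotheses restrict to $0<\beta<1$ for $i=1$.
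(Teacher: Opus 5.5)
Your proof is correct and matches the paper's intended argument: the paper gives no separate proof and treats the statement as an immediate consequence of the preceding lemma (\cite[Lemma 1.23]{Heinonen}). Writing $\lvert u\rvert=\max\{u,-u\}$ (or $u^{+}+u^{-}$), with the lemma's applicability coming from $w_{i\beta}\in A_N$ via Lemma \ref{ap weights lem}, is exactly that step.
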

Now, we prove that compactly supported smooth functions vanishing near the origin are dense.
\begin{theorem} \label{approximation}
    Let, $0<\beta <1$, if $i=1$ and $0<\beta \leq 1$, if  $i=2$. Then 
    \begin{align*}
        W^{1,N}_{0}\left(\Omega, w_{i\beta}\right) = W^{1,N}_{0}\left(\Omega \setminus \lbrace 0 \rbrace, w_{i\beta} \right).
    \end{align*}
\end{theorem}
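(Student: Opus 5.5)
Since $W^{1,N}_{0}(\Omega\setminus\{0\},w_{i\beta})\subset W^{1,N}_{0}(\Omega,w_{i\beta})$ trivially, it suffices to approximate any $\phi\in C^1_c(\Omega)$ in the weighted energy norm by functions in $C^1_c(\Omega\setminus\{0\})$. I would argue by capacity: the point $\{0\}$ has zero weighted $N$-capacity with respect to $w_{i\beta}$ in the stated range of $\beta$. Concretely, I will construct cut-off functions $\eta_\varepsilon\in W^{1,\infty}$ with $\eta_\varepsilon=0$ near the origin, $\eta_\varepsilon=1$ outside a small ball $B(0,\delta_\varepsilon)$, $0\le\eta_\varepsilon\le1$, and
\[
\int_{\mathbb{R}^N}|\nabla \eta_\varepsilon|^N w_{i\beta}\,dx\to 0 .
\]
Then $\phi\eta_\varepsilon$ is Lipschitz, compactly supported in $\Omega\setminus\{0\}$, and it can be mollified to lie in $C^1_c(\Omega\setminus\{0\})$; alternatively, Lemma \ref{contains lip functions} applied on $\Omega\setminus\{0\}$ gives membership directly. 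The error satisfies
\[
\|\nabla(\phi-\phi\eta_\varepsilon)\|_{N,w_{i\beta}}\le \|\phi\|_\infty\|\nabla\eta_\varepsilon\|_{N,w_{i\beta}}+\Big(\int_{B(0,\delta_\varepsilon)}|\nabla\phi|^N w_{i\beta}\Big)^{1/N},
\]
and both terms tend to $0$: the first by construction, the second because $w_{i\beta}\in L^1_{loc}$ and $\delta_\varepsilon\to0$.

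The key step is building $\eta_\varepsilon$, and I would take radial cut-offs given by the extremals of the one-dimensional capacity problem. Normalize $R_\Omega=1$. In polar coordinates the energy of a radial $\eta(r)$ is $\omega_{N-1}\int |\eta'|^N r^{N-1}w(r)\,dr$. Substituting $t=\ln(1/r)$ (or $t=\ln(e/r)$ for $i=2$) gives $r^{N-1}|\eta'|^N\,dr=|\partial_t\eta|^N\,dt$, so the energy becomes $\omega_{N-1}\int|\partial_t\eta|^N\,t^{\beta(N-1)}\,dt$. For $\beta\le 1$ the optimal profile from $t=a$ to $t=b$ yields
\[
\mathrm{cap}=\Big(\int_a^b t^{-\beta}\,dt\Big)^{-(N-1)},
\]
and $\int^\infty t^{-\beta}\,dt=\infty$ exactly when $\beta\le1$. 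So I will choose $\eta_\varepsilon(t)=\min\{1,\max\{0,(\int_t^{b}s^{-\beta}ds)/(\int_a^b s^{-\beta}ds)\}\}$ with $a$ fixed large and $b\to\infty$. For $\beta=1$ this is the familiar $\log\log$ cut-off. Its energy is $\omega_{N-1}(\int_a^b s^{-\beta}ds)^{1-N}\to0$. To make $\delta_\varepsilon\to0$ I will let $a\to\infty$ as well, more slowly than $b$, so that the integral still diverges; since $b$ can be sent to infinity independently, this works.

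I expect the main obstacle to be a technical rather than conceptual one: checking that the truncated product $\phi\eta_\varepsilon$ genuinely belongs to the completion of $C^1_c(\Omega\setminus\{0\})$. Lemma \ref{contains lip functions} is stated for $\Omega$, and $\Omega\setminus\{0\}$ is a domain on which $w_{i\beta}$ is bounded near the support, so ordinary mollification suffices there. The remaining care needed is the boundary: the weight is extended by $1$ outside $B(0,R_\Omega)$, and $\phi\eta_\varepsilon=\phi$ near $\partial\Omega$, so nothing changes there. For $i=1$ the weight blows up (to $\infty$ at the origin when $\beta>0$ since $Y_1\to0$, and also at $|x|=R_\Omega$); but the cut-off only modifies things near $0$, where the computation above applies with $t=\ln(1/r)$.
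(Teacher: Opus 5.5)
Your proposal is correct and is essentially the paper's argument: the paper also shows that the origin has zero weighted $N$-capacity using a log-type cut-off. It takes $\ln\ln(e\delta/|x|)/\ln\ln(ke\delta)$ for every $\beta\le 1$, dominating $w_{i\beta}$ by $(\ln(eR_\Omega/|x|))^{N-1}$, and then cites \cite[Theorem 2.43]{Heinonen} for the step you carry out by hand with $\phi\eta_\varepsilon$ and mollification. One harmless slip: for $i=1$ the weight $w_{1\beta}$ tends to $0$, not $\infty$, as $|x|\to R_\Omega$, which does not affect your argument.
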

\begin{proof}
If $\Omega$ does not contain the origin then there is nothing to prove. Let $0 \in \Omega$, and $\delta>0$ be such that $B(0,\delta) \subset \Omega$. Since $C^{\infty}_{c}(\Omega\setminus\{0\}) \subset C^{\infty}_{c}(\Omega) $ so one direction follows trivially, we just need to check the other direction. 

  \par We first construct a sequence $u_k \in W^{1,N}_{0}\left(\Omega, w_{i\beta}\right)$ such that $0 \le u_k \le 1$, $u_k \equiv 1$ in a neighbourhood of the origin and $u_k \to 0$ in $W^{1,N}_{0}\left (\Omega, w_{i\beta}\right)$ as $k\to \infty$. The proof then follows from \cite[Theorem 2.43]{Heinonen}. 
  We define,
    \begin{align}\label{sub broken}
    u_k(x) \coloneqq \begin{cases}
       1,  \quad 0 \le \lvert x \rvert < \frac{1}{k}\\
    \frac{\ln \ln \frac{e \delta}{\lvert x \rvert}}{\ln \ln (k e \delta)}, \quad \frac{1}{k} \le \lvert x \rvert < \delta\\
    0, \quad \delta \le \lvert x \rvert .
    \end{cases}
\end{align}
Note that, by Lemma \ref{contains lip functions} $u_k \in W^{1,N}_{0}(\Omega, w_{i\beta})$ for each $k$.  Also, the weak  derivatives of $u_k$  are given by
\begin{align}\label{sub sub broken}
    \partial_{x_j}u_{k}(x) \coloneqq \begin{cases}
       0\quad \quad \quad \quad \quad \quad\quad \quad,  \quad 0 < \lvert x \rvert < \frac{1}{k}\\
    -\frac{1}{\left(\ln \frac{e \delta}{ \lvert x \rvert}\right)\left(\ln \ln (ke\delta)\right)} \frac{x_j}{\lvert x \rvert^2}, \quad \frac{1}{k} < \lvert x \rvert < \delta\\
    0\quad \quad \quad \quad \quad \quad\quad \quad, \quad \delta \le \lvert x \rvert .
    \end{cases}
\end{align}
for any $j=1, \dots, N$. 
Now, we will show that the integrals 
$$\int_{\Omega} \lvert u_k \rvert^N \left(\ln \frac{e R_{\Omega}}{\lvert x \rvert}\right)^{N-1} \mbox{ and } \int_{\Omega} \lvert \nabla u_k \rvert^N \left(\ln \frac{e R_{\Omega}}{\lvert x \rvert}\right)^{N-1} $$ 
converge to $0$ as $k \to \infty.$ This will complete our construction as 
$$\left(\ln \frac{R_{\Omega}}{\lvert x \rvert}\right)^{\beta(N-1)} \le \left(\ln \frac{e R_{\Omega}}{\lvert x \rvert}\right)^{\beta(N-1)} \le \left(\ln \frac{e R_{\Omega}}{\lvert x \rvert}\right)^{N-1}$$ 
for $0\le \beta <1$ and $x\in \Omega \setminus \lbrace 0 \rbrace$. As $u_k \to 0$ pointwise and $0 \le u_k \le 1$ so by dominated convergence theorem we have 
$$\lim_{k \to \infty}\int_{\Omega} \lvert u_k \rvert^N \left(\ln \frac{e R_{\Omega}}{\lvert x \rvert}\right)^{N-1} \, dx = 0.$$ 
Next, we consider
\begin{align*}
 \int_{\Omega} \lvert \nabla u_k \rvert^N \left(\ln \frac{e R_{\Omega}}{\lvert x \rvert}\right)^{N-1} \, dx &= \frac{\omega_{N-1}}{\left(\ln \ln (ke \delta)\right)^N}\int_{\frac{1}{k}}^{\delta} \frac{\left(\ln \frac{e R_{\Omega}}{r}\right)^{N-1}}{r\left(\ln \frac{e \delta}{r}\right)^{N}} \, dr\\
&= \frac{\omega_{N-1}}{\left(\ln \ln (k e \delta)\right)^N} \int_{1}^{\ln (k e \delta)} \frac{(v + \ln \frac{R_{\Omega}}{\delta})^{N-1}}{v^N} \, dv\\
&\leq  \frac{\omega_{N-1}2^{N}}{\left(\ln \ln (k e \delta)\right)^N} \int_{1}^{\ln (ke \delta)} \frac{(v^{N-1} + (\ln \frac{R_{\Omega}}{\delta})^{N-1})}{v^N} \, dv \\
&\to 0, \mbox{ as } k \to \infty. 
\end{align*}
This completes the proof. \\
\end{proof}

Finally, we prove that the same conclusion fails to hold for the weights $w_{2\beta}$, when $\beta>1$. 
\begin{theorem} \label{not approximation}
  For any $1<\beta <\infty$, we have 
     \begin{align} \label{fail approximation}
        W^{1,N}_{0}\left(\Omega, w_{2\beta}\right) \neq W^{1,N}_{0}\left(\Omega \setminus \{0\}, w_{2\beta}\right),
    \end{align}
    if $\Omega$ contains the origin.
\end{theorem}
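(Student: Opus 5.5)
We will exhibit a function that belongs to $W^{1,N}_{0}(\Omega, w_{2\beta})$ but not to $W^{1,N}_{0}(\Omega\setminus\{0\}, w_{2\beta})$. The natural separating tool is the Hardy inequality of Lemma \ref{psara}. For $\beta>1$ and $i=2$, the exponent on the left of \eqref{hardy type} is
\[
w_{2\beta}^{1-\frac{N'}{\beta}} = X_1^{-\beta(N-1)+(N-1)N'} = X_1^{-(N-1)\beta+N}.
\]
The inequality \eqref{hardy type} holds for all $u\in C^1_c(\Omega\setminus\{0\})$. By density and Fatou's lemma, it then extends to every $u\in W^{1,N}_{0}(\Omega\setminus\{0\}, w_{2\beta})$: take $\phi_m\to u$ in the weighted gradient norm, and pass to an a.e.\ convergent subsequence. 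Weighted $L^N$ control of $\phi_m$ comes from the weighted Poincar\'e inequality for $A_N$ weights, via Lemma \ref{ap weights lem}(i).

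Our candidate is a smooth cutoff $\eta\in C^\infty_c(\Omega)$ with $\eta\equiv1$ on $B(0,\delta)\subset\Omega$. It lies in $W^{1,N}_0(\Omega,w_{2\beta})$ by definition. However,
\[
\int_{B(0,\delta)}\frac{X_1^{-(N-1)\beta+N}(|x|/R_\Omega)}{|x|^N}\,dx=\omega_{N-1}\int_0^\delta \left(\ln\frac{eR_\Omega}{r}\right)^{(N-1)\beta-N}\frac{dr}{r}.
\]
After the substitution $s=\ln(eR_\Omega/r)$ this becomes $\omega_{N-1}\int^\infty s^{(N-1)\beta-N}\,ds$. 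That integral diverges iff $(N-1)\beta-N\ge -1$, i.e.\ iff $\beta\ge 1$. So for $\beta>1$ the left side of \eqref{hardy type} is infinite for $\eta$, while the right side is finite. Hence $\eta\notin W^{1,N}_0(\Omega\setminus\{0\},w_{2\beta})$, which gives \eqref{fail approximation}.

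The step needing the most care is the extension of \eqref{hardy type} to the closure. We need a subsequence $\phi_m\to u$ a.e. For this I would use that $\|\nabla\cdot\|_{N,w_{2\beta}}$ dominates the unweighted $W^{1,N}_0$ norm, since $w_{2\beta}\ge1$. Then $\phi_m$ is Cauchy in $W^{1,N}_0(\Omega)$, hence converges in $L^N$, and a.e.\ along a subsequence, to the limit $u$. Identifying $u$ with $\eta$ is routine, and Fatou's lemma applied to the left side of \eqref{hardy type} finishes the argument. This sidesteps any weighted Poincar\'e inequality.
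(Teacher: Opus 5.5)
Your proposal is correct and follows essentially the same route as the paper. Both arguments extend the Hardy inequality of Lemma \ref{psara} to the closure $W^{1,N}_0(\Omega\setminus\{0\},w_{2\beta})$, and then contradict it with a function equal to $1$ near the origin, using that $X_1^{N-(N-1)\beta}|x|^{-N}\notin L^1_{loc}$ for $\beta>1$. Your version is more explicit than the paper's, which asserts the extension (citing Corollary \ref{lattice}) without the a.e.\ convergence and Fatou argument that you supply via $w_{2\beta}\ge 1$ and the unweighted Poincar\'e inequality.
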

\begin{proof}
    Since $\Omega$ contains the origin so there exists $\delta>0$ such that $B(0,\delta) \subset \Omega.$ If possible let $ W^{1,N}_{0}\left(\Omega, w_{2\beta}\right) = W^{1,N}_{0}\left(\Omega \setminus \{0\}, w_{2\beta} \right),$ holds. Then by combining  \Cref{psara} and Corollary \ref{lattice} we have,
    \begin{align} \label{fail hardy}
         \int_{\Omega} \frac{\lvert u \rvert ^N}{\lvert x \rvert^N} X_1^{N - (N-1) \beta} \, dx \le \left(\frac{N'}{\beta - 1}\right)^{N} \int_{\Omega} \lvert \nabla u \rvert^N \left(\ln \frac{e R_{\Omega}}{\lvert x \rvert}\right)^{\beta(N-1)},
    \end{align}
    for all $u \in  W^{1,N}_{0}\left(\Omega, w_{2\beta}\right) $. This gives a contradiction as $\frac{X_1^{N - (N-1) \beta} }{\lvert x \rvert^N} \notin L^1_{loc}(\Omega)$ for  any $\beta > 1$. This completes the proof.\\
    \end{proof}

\section{ Embeddings for \texorpdfstring{$\beta=1$}{}}\label{case beta 1}
In this section we prove \Cref{moser}, \Cref{non radial moser}, \Cref{weak moser}, and \Cref{lerray weak}.  We will use the following result frequently throughout the section, which follows from the standard decomposition of $L^2$ functions on $\mathbb{S}^{N-1}$ using spherical harmonics. See \cite[Chapter IV, Section 2]{stein-weiss} and \cite[Chapter 1]{Feng-Sph-Har} for details.
\begin{lemma}\label{decomposition lemma}
Let $u\in  W^{1,2}\left(\mathbb{B}_N\right)$, then employing the spherical coordinates $x= \left(r,\theta\right)$ in $\mathbb{B}_{N}$, we can decompose 
\begin{align}\label{spherical harmonic decomposotion}
u(x) = \sum_{k=0}^{\infty} u_k(r) f_k(\theta),
\end{align} 
 where $u_0(r) = \fint_{\mathbb{S}^{N-1}} u(r \theta) \, d \sigma\left(\theta\right)$, $f_0(\theta) = 1,$ $\left \lbrace f_k\right \rbrace_{k\in \mathbb{Z}^{+}}$ forms an orthonormal basis for $L^2(\mathbb{S}^{N-1})$ and satisfy 
 \begin{align*}
 -\Delta_{\mathbb{S}^{N-1}} f_k = k \left(k+N-2\right) f_k \mbox{ on } \mathbb{S}^{N-1}.
 \end{align*}

\end{lemma}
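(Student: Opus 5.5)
This is the standard spherical harmonic decomposition, so I would not prove it from scratch; I would recall the classical facts from \cite[Chapter IV, Section 2]{stein-weiss} and \cite[Chapter 1]{Feng-Sph-Har} and check that they apply to $u\in W^{1,2}(\mathbb{B}_N)$.

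\textbf{Step 1: the basis on the sphere.} I would start from the orthogonal decomposition
\begin{align*}
L^2(\mathbb{S}^{N-1})=\bigoplus_{m\ge 0}\mathcal{H}_m,
\end{align*}
where $\mathcal{H}_m$ is the finite dimensional space of restrictions to $\mathbb{S}^{N-1}$ of homogeneous harmonic polynomials of degree $m$. Each element of $\mathcal{H}_m$ is an eigenfunction of $-\Delta_{\mathbb{S}^{N-1}}$ with eigenvalue $m(m+N-2)$. Choosing an orthonormal basis in each $\mathcal{H}_m$ and listing all of them in a single sequence gives an orthonormal basis $\{f_k\}$ with $-\Delta_{\mathbb{S}^{N-1}}f_k=\lambda_k f_k$. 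Here $\lambda_k$ is the eigenvalue of the space containing $f_k$, and the statement's notation $k(k+N-2)$ should be read with $k$ the degree. Since $\mathcal{H}_0$ consists of the constants, I take $f_0\equiv1$, normalized with respect to the normalized surface measure $d\sigma$ so that averages appear.

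\textbf{Step 2: expanding $u$.} By Fubini in polar coordinates, $\int_{\mathbb{B}_N}|u|^2\,dx=\int_0^1 r^{N-1}\|u(r\cdot)\|_{L^2(\mathbb{S}^{N-1})}^2\,dr<\infty$. Hence $u(r\cdot)\in L^2(\mathbb{S}^{N-1})$ for a.e.\ $r$, and for such $r$
\begin{align*}
u(r\theta)=\sum_k u_k(r)f_k(\theta), \qquad u_k(r)=\int_{\mathbb{S}^{N-1}}u(r\theta)f_k(\theta)\,d\sigma(\theta).
\end{align*}
With $f_0=1$ this gives $u_0(r)=\fint_{\mathbb{S}^{N-1}} u(r\theta)\,d\sigma(\theta)$, the spherical mean.

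\textbf{Step 3: convergence.} Parseval together with Fubini shows that the series converges in $L^2(\mathbb{B}_N)$, which is the sense in which \eqref{spherical harmonic decomposotion} holds.

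\textbf{Step 4: regularity of the coefficients.} Later arguments need the coefficients to be differentiable. For $u\in C^1$ I would differentiate $u_k$ under the integral sign. For general $u$ I would approximate by smooth functions and show $u_k\in W^{1,2}_{loc}(0,1)$. Integrating by parts on the sphere then gives the identity
\begin{align*}
\int_{\mathbb{B}_N}|\nabla u|^2\,dx=\sum_k\int_0^1\Big(|u_k'|^2+\frac{\lambda_k}{r^2}|u_k|^2\Big)r^{N-1}\,dr.
\end{align*}
This identity is not needed for the statement itself, but it is the form in which the lemma will be used.

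\textbf{Main obstacle.} The only delicate step is the a.e.-$r$ measurability and the approximation argument in Step 4. Smooth approximation with Fubini handles it, so no step here is a genuine obstacle.
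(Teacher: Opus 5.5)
Your proposal is correct and takes the same route as the paper. The paper gives no argument beyond invoking the standard spherical harmonic decomposition of $L^2(\mathbb{S}^{N-1})$ from Stein--Weiss and the cited monograph on spherical harmonics. You are also right on two points the paper leaves implicit: the eigenvalue $k(k+N-2)$ must be read as referring to the degree of the harmonic space containing $f_k$, since each eigenvalue with $k\ge 1$ has multiplicity greater than one, and orthonormality with $f_0\equiv 1$ requires the normalized surface measure.
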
 
 The proof of the following lemma follows from the derivation of \cite[Equation (28)]{di2024optimal}. We include the proof for convenience. 

 \begin{lemma}\label{estimates of angular part}
  There exists a constant $c_N>0$, such that for $u \in C^1_c(\mathbb{B}_N)$ we have,
    \begin{align}\label{estimates of angular part equation}
        \int_{\mathbb{B}_N} \left\lvert \nabla \left[(u-u_0)X_1^{-1+1/N}\right] \right \rvert^N \, dx \le c_N \int_{\mathbb{B}_N} \lvert \nabla u \rvert^N X_1^{-N+1} \, dx,
    \end{align}
    where $u_0$ is the spherical mean of $u$ i.e. $u_0(r) = \fint_{\mathbb{S}^{N-1}} u(r \theta) \, d \sigma\left(\theta\right)$.
\end{lemma}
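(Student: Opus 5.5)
Set $v \coloneqq u-u_0$ and $\psi \coloneqq X_1^{-1+1/N}(\lvert x\rvert)$ (here $R_\Omega=1$). The plan is to expand the gradient of the product, $\nabla(v\psi)=\psi\nabla v + v\nabla\psi$, so that by convexity
\begin{align*}
\int_{\mathbb{B}_N}\lvert\nabla(v\psi)\rvert^N \le 2^{N-1}\int_{\mathbb{B}_N}\lvert\nabla v\rvert^N X_1^{-N+1}\,dx + 2^{N-1}\int_{\mathbb{B}_N}\lvert v\rvert^N\lvert\nabla\psi\rvert^N\,dx,
\end{align*}
and then bound the two terms separately. Since $\frac{d}{dr}X_1^{-1}=-1/r$, we have $\lvert\nabla\psi\rvert = (1-\tfrac1N)X_1^{1/N}/r \le X_1^{1/N}/r$. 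Hence $\lvert\nabla\psi\rvert^N\le X_1/r^N\le r^{-N}X_1^{-N+1}$, using $X_1\le1$.

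\emph{First term.} I will show that $\int\lvert\nabla u_0\rvert^N X_1^{-N+1}\le\int\lvert\nabla u\rvert^N X_1^{-N+1}$. Indeed, $u_0'(r)=\fint_{\mathbb{S}^{N-1}}\partial_r u(r\theta)\,d\sigma$, so Jensen's inequality on each sphere gives $\lvert u_0'(r)\rvert^N\le\fint_{\mathbb{S}^{N-1}}\lvert\nabla u\rvert^N(r\theta)\,d\sigma$. The weight is radial, so integrating in $r$ yields the claim. Consequently $\int\lvert\nabla v\rvert^N X_1^{-N+1}\le 2^N\int\lvert\nabla u\rvert^N X_1^{-N+1}$.

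\emph{Second term (main step).} Here I need a Hardy-type inequality with no loss for the zero-mean part:
\begin{align*}
\int_{\mathbb{B}_N}\frac{\lvert v\rvert^N}{\lvert x\rvert^N}X_1^{-N+1}\,dx\le C_N\int_{\mathbb{B}_N}\lvert\nabla u\rvert^N X_1^{-N+1}\,dx.
\end{align*}
I would prove it sphere by sphere. For fixed $r$, the function $\theta\mapsto v(r\theta)$ has zero mean on $\mathbb{S}^{N-1}$. The $L^N$ Poincaré--Wirtinger inequality on $\mathbb{S}^{N-1}$ therefore gives
\begin{align*}
\int_{\mathbb{S}^{N-1}}\lvert v(r\theta)\rvert^N\,d\sigma \le C_N\int_{\mathbb{S}^{N-1}}\lvert\nabla_\theta u(r\theta)\rvert^N\,d\sigma \le C_N r^N\int_{\mathbb{S}^{N-1}}\lvert\nabla u(r\theta)\rvert^N\,d\sigma,
\end{align*}
since the tangential gradient satisfies $\lvert\nabla_\theta u\rvert = r\lvert\nabla_{\text{tan}}u\rvert\le r\lvert\nabla u\rvert$; this is the place where the decomposition of Lemma \ref{decomposition lemma} is used. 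Multiplying by $r^{N-1}r^{-N}X_1^{-N+1}(r)$ and integrating over $r\in(0,1)$ gives the desired bound with no singularity at the origin. The factor $r^{-N}$ exactly cancels the $r^N$, which is why the angular part is harmless, in contrast to the radial part where only the weaker Lemma \ref{psara}-type estimates are available.

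Combining the two terms with the pointwise bound $\lvert\nabla\psi\rvert^N\le r^{-N}X_1^{-N+1}$ gives \eqref{estimates of angular part equation} with $c_N=2^{N-1}(2^N+C_N)$. Finally, I need $v\psi\in W^{1,N}$ near $0$ so that the product rule is legitimate. Since $u\in C^1_c$, we have $\lvert v\rvert\le Cr$, while $\psi$ grows only logarithmically, so $v\psi$ is Lipschitz-like near the origin and no boundary term arises. The only genuine obstacle is justifying the spherical Poincaré inequality in $L^N$ rather than $L^2$. If the spherical-harmonic $L^2$ route is awkward, I would instead invoke the standard $L^p$ Poincaré--Wirtinger inequality on the compact manifold $\mathbb{S}^{N-1}$ directly.
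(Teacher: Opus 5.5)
Your proof is correct, and its skeleton agrees with the paper's. Both arguments reduce \eqref{estimates of angular part equation}, via the product rule and $\lvert\nabla X_1^{-1+1/N}\rvert^N\le X_1\lvert x\rvert^{-N}\le X_1^{-N+1}\lvert x\rvert^{-N}$, to bounding $\int\lvert\nabla(u-u_0)\rvert^N X_1^{-N+1}$ and $\int\lvert x\rvert^{-N}\lvert u-u_0\rvert^N X_1^{-N+1}$. Both treat the second term exactly as you do, by the $L^N$ Poincar\'e--Wirtinger inequality on $\mathbb{S}^{N-1}$ applied on each sphere.

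The difference is in the first term. The paper splits $\lvert\nabla u\rvert^N\ge\lvert\partial_r u\rvert^N+r^{-N}\lvert\nabla_\theta u\rvert^N$ and applies a Lindqvist-type inequality $\lvert b-a\rvert^N-\lvert a\rvert^N\ge(2^{N-1}-1)^{-1}\lvert b\rvert^N-N\lvert a\rvert^{N-2}\langle a,b\rangle$. It then removes the cross term $\int_{\mathbb{S}^{N-1}}\lvert\partial_r u_0\rvert^{N-2}\partial_r u_0\,\partial_r(u-u_0)\,d\sigma$ using spherical-harmonic orthogonality. Finally it reassembles $\lvert\nabla(u-u_0)\rvert^N$ from $\lvert\partial_r(u-u_0)\rvert^N$ and $r^{-N}\lvert\nabla_\theta u\rvert^N$.

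You instead use the triangle inequality plus Jensen on spheres, which is the same estimate the paper later records as \eqref{grad u_0 estimate}. This is shorter and avoids both the Lindqvist inequality and Lemma \ref{decomposition lemma}. Your argument only needs that $\theta\mapsto(u-u_0)(r\theta)$ has zero mean, which is the definition of $u_0$.

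The paper's finer splitting follows the derivation in \cite{di2024optimal}, where the energy is the Leray functional $I_{N,\Omega}$. That functional is a difference and does not dominate $\int\lvert\nabla u\rvert^N$, so a plain triangle inequality would not close there. For the positive weighted norm $\lVert\nabla u\rVert_{N,w_{21}}$ this extra structure is unnecessary, and your route gives the lemma with a cruder but irrelevant constant.
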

\begin{proof}
    We write,
    \begin{align}
        \int_{\mathbb{B}_N} \lvert \nabla u \rvert^N X_1^{-N+1} \, dx &= \int_0^1 X_1^{-N+1}(r) r^{N-1} \int_{\mathbb{S}^{N-1}} \left[\left(\partial_r u \right)^2 + \frac{1}{r^2}\lvert \nabla_{\theta} u \rvert^2\right]^{\frac{N}{2}} d \sigma\left(\theta\right) dr \notag\\
        &\ge  \int_0^1 X_1^{-N+1}(r) r^{N-1} \int_{\mathbb{S}^{N-1}} \lvert\partial_r u \rvert^N \, d \sigma\left(\theta\right) \, dr\notag \\
        &+  \int_0^1 \frac{X_1^{-N+1}(r)}{r}  \int_{\mathbb{S}^{N-1}} \lvert \nabla_{\theta} u \rvert^N \, d \sigma \left(\theta\right) \, dr \coloneqq I_1 +I_2, \label{estimates of angular part int}
    \end{align}
    where we have used the fact that $\left(\lvert a \rvert + \lvert b \rvert\right)^p \ge \lvert a \rvert^p + \lvert b \rvert^p$ for any $p \ge 1$ and $a, b \in \mathbb{R}$.

Now we use the following elementary inequality  
    \begin{align*}
        \lvert b - a \rvert^N - \lvert a \rvert^N \ge \frac{1}{2^{N-1}-1} \lvert b \rvert^N - N \lvert a \rvert^{N-2} \langle a , b \rangle, \mbox{ for all } a,b \in \mathbb{R}^N
    \end{align*}
with $b = \partial_r u_0 $ and $a =- \partial_r(u-u_0)$ to derive:
\begin{align}
    \int_{\mathbb{S}^{N-1}} \lvert \partial_r u \rvert^N d \sigma\left(\theta\right) &\ge  \int_{\mathbb{S}^{N-1}} \lvert \partial_r u_0 \rvert^N d \sigma\left(\theta\right)\notag\\
    &\quad + \frac{1}{2^{N-1}-1}  \int_{\mathbb{S}^{N-1}} \lvert \partial_r (u-u_0) \rvert^N d \sigma\left(\theta\right)\notag \\
    &\quad \quad + N \int_{\mathbb{S}^{N-1}} \lvert \partial_r u_0 \rvert^{N-2}  \partial_r u_0  \partial_r (u-u_0) d \sigma\left(\theta\right). \label{estimates of angular part int1}
\end{align}

Now we use Lemma \ref{decomposition lemma} to write $u(x) = \sum_{k=0}^{\infty} u_k(r) f_k(\theta)$. Note that, the coefficients $u_k$ are given by $u_k(r) = \int_{\mathbb{S}^{N-1}} u(r \theta) f_k(\theta) d \sigma\left(\theta\right).$ 

Since $u \in C^1_c(\mathbb{B}_N)$ we have $u'_k(r) = \int_{\mathbb{S}^{N-1}} \partial_r u(r \theta) f_k(\theta) d \sigma\left(\theta\right)$ and we use Lemma \ref{decomposition lemma} with $\partial_r u$ to write
\begin{align*}
    \partial_r u(r \theta) &= \sum_{k=0}^{\infty} \left(\int_{\mathbb{S}^{N-1}} \partial_r (u(r \theta)) f_k(\theta) d \sigma\left(\theta\right)\right) f_k(\theta)\\
    &= \sum_{k=0}^{\infty} u_k'(r) f_k(\theta).
\end{align*}
Thus we have, $ \partial_r \left(u - u_0\right)  = \sum_{k=1}^{\infty} u_k'(r) f_k(\theta)$. Therefore we have,
\begin{align*}
    \int_{\mathbb{S}^{N-1}} &\lvert \partial_r u_0 \rvert^{N-2}  \partial_r u_0  \partial_r (u-u_0) d \sigma\left(\theta \right)\\
    &= \lvert u'_0(r) \rvert^{N-2} u_0'(r) \sum_{k=1}^{\infty} u_k'(r)  \int_{\mathbb{S}^{N-1}} f_k(\theta) \, d \sigma\left(\theta\right) = 0.
\end{align*}
This together with \eqref{estimates of angular part int1} implies,
\begin{align}
     \int_{\mathbb{S}^{N-1}} \lvert \partial_r u \rvert^N d \sigma\left(\theta\right) \ge \frac{1}{2^{N-1}-1}  \int_{\mathbb{S}^{N-1}} \lvert \partial_r (u-u_0) \rvert^N d \sigma (\theta).
\end{align}
Therefore from \eqref{estimates of angular part int} we have,
\begin{align*}
    &\int_{\mathbb{B}_N} \lvert \nabla u \rvert^N X_1^{-N+1} \, dx\\
    &\ge \frac{1}{2^{N-1}-1} \int_0^1 X_1^{-N+1}(r) r^{N-1}  \int_{\mathbb{S}^{N-1}} \left(\lvert \partial_r (u-u_0) \rvert^N +  \frac{1}{r^N}\lvert \nabla_{\theta} u \rvert^N \right) \, d \sigma\left(\theta\right) \, dr \\
    & \qquad \qquad  + \left(1 - \frac{1}{2^{N-1}-1} \right) I_2\\
    &\ge \frac{2^{1- N/2}}{2^{N-1}-1} \int_0^1 X_1^{-N+1}(r) r^{N-1}  \int_{\mathbb{S}^{N-1}} \left(\lvert \partial_r (u-u_0) \rvert^2 + \frac{1}{r^2}\lvert \nabla_{\theta} u \rvert^2 \right)^{N/2} \, d \sigma\left(\theta\right) \, dr \\
     & \qquad \qquad + \left(1 - \frac{1}{2^{N-1}-1} \right) I_2\\
     &= \frac{2^{1- N/2}}{2^{N-1}-1} \int_{\mathbb{B}_N} \lvert \nabla (u- u_0) \rvert^N X_1^{-N+1} \, dx + \left(1 - \frac{1}{2^{N-1}-1} \right) I_2,
\end{align*}
 where we have used the fact that $\left(\lvert a \rvert + \lvert b \rvert\right)^p \le 2^{p-1} \left(\lvert a \rvert^p + \lvert b \rvert^p\right)$ for any $p \ge 1$ and $a, b \in \mathbb{R}$. Now by Poincar\'e inequality on $\mathbb{S}^{N-1}$, which follows form \cite[Theorem 2.9]{hebey-book} combining with a contradiction-compactness argument, we have 
 \begin{align*}
     \int_{\mathbb{S}^{N-1}} \lvert \nabla_{\theta} u \rvert^N \,  d \sigma\left(\theta\right) \ge C_N  \int_{\mathbb{S}^{N-1}} \lvert u - u_0 \rvert^N d \sigma\left(\theta\right),
 \end{align*}
for some constant $C_N>0$.
Therefore we have,
\begin{align*}
     \int_{\mathbb{B}_N} &\lvert \nabla u \rvert^N X_1^{-N+1} \, dx\\
     &\ge c_N \left(\int_{\mathbb{B}_N} \lvert \nabla (u- u_0) \rvert^N X_1^{-N+1} \, dx + \int_{\mathbb{B}_N} \lvert x \rvert^{-N} \lvert u- u_0 \rvert^N X_1^{-N+1} \, dx \right)\\
     &\ge c_N \left(\int_{\mathbb{B}_N} \lvert \nabla (u- u_0) \rvert^N X_1^{-N+1} \, dx + \int_{\mathbb{B}_N} \lvert x \rvert^{-N} \lvert u- u_0 \rvert^N X_1 \, dx \right)\\
     &\ge c_N  \int_{\mathbb{B}_N} \left\lvert \nabla \left[(u-u_0)X_1^{-1+1/N}\right] \right \rvert^N \, dx,
\end{align*}
for some constant $c_N>0$. This completes the proof.\\
\end{proof}

\begin{proof}[Proof of \Cref{non radial moser}]
We first prove \eqref{equation non radial moser}. Let $u \in C^1_c(\Omega \setminus \{0\})$, extending by zero outside we may assume $u \in C^1_c\left(B\left(0,R_{\Omega}\right)\right) \setminus \{0\}$ and by scaling, we may assume that $R_{\Omega} = 1.$ We follow Trudinger's technique \cite{Trudinger}. Let $u_0$ be be the spherical mean as in \Cref{decomposition lemma}.    
  
   Let $q > N,$ then we have
    \begin{align}\label{non radial moser proof 1}
        &\left(\fint_{\mathbb{B}_N} \left \lvert u X_2^{1/N'} X_1^{-1/N'} \right\rvert^q \, dx\right)^{1/q} \notag \\
        &\qquad  \le \left(\fint_{\mathbb{B}_N} \left \lvert u_0 X_2^{1/N'} X_1^{-1/N'} \right\rvert^q \, dx\right)^{1/q} + \left(\fint_{\mathbb{B}_N} \left \lvert (u-u_0) X_2^{1/N'} X_1^{-1/N'} \right\rvert^q \, dx\right)^{1/q} \notag \\
        &\qquad  \coloneqq I_1 + I_2.    
         \end{align}
    We note $u-u_0 \in C_c^{1}\left(\mathbb{B}_N\right)$ and  estimate $I_2$ as follows,
    \begin{align}\label{non radial moser proof 2}
        I_2 &= \left(\fint_{\mathbb{B}_N} \left \lvert (u-u_0) X_2^{1/N'} X_1^{-1/N'} \right\rvert^q \, dx\right)^{1/q} \notag \\
        &\stackrel{\left(X_2 \leq 1\right)}{\le} \left(\fint_{\mathbb{B}_N} \left \lvert (u-u_0) X_1^{-1/N'} \right\rvert^q \, dx\right)^{1/q} \notag \\
        &\le c_1(N) q^{1- \frac{1}{N}} \left(\fint_{\mathbb{B}_N} \left \lvert \nabla \left[(u-u_0) X_1^{-1/N'} \right] \right\rvert^N \, dx\right)^{1/N}\notag \\
        &\stackrel{\eqref{estimates of angular part equation}}{\le} c_2(N) q^{1- \frac{1}{N}} \left(\fint_{\mathbb{B}_N} \left \lvert \nabla u \right\rvert^N X_1^{-N +1} \, dx\right)^{1/N},
    \end{align}
    where the second last inequality follows from the proof of  \cite[Theorem 1 ]{Trudinger}. Now we estimate $I_1$.
    Recall that  by \Cref{radial lemma} we have the estimate,
    \begin{equation}
        \lvert u_0(x) \rvert \le \frac{1}{\omega_{N-1}^{1/N}} \left(\ln \ln \frac{e}{\lvert x \rvert}\right)^{1/N'} \left(\int_{\mathbb{B}_N} \left \lvert \nabla u_0 \right\rvert^N X_1^{-N +1} \, dx\right)^{1/N}.
    \end{equation}
    This implies,
    \begin{align} \label{u_0 estimate}
         \lvert u_0(x) \rvert  X_2^{1/N'} \le \frac{1}{\omega_{N-1}^{1/N}} \left(\int_{\mathbb{B}_N} \left \lvert \nabla u_0 \right\rvert^N X_1^{-N +1} \, dx\right)^{1/N}.
    \end{align}
    We will show that,
    \begin{align}\label{grad u_0 estimate}
        \int_{\mathbb{B}_N} \left \lvert \nabla u_0 \right\rvert^N X_1^{-N +1} \, dx \le \int_{\mathbb{B}_N} \left \lvert \nabla u \right\rvert^N X_1^{-N +1} \, dx.
    \end{align}
    Note that,  $ u'_0 (r) = \fint_{\mathbb{S}^{N-1}} \partial_r u \, d \sigma\left(\theta \right).$ Thus we have,
    \begin{align*}
       \lvert \nabla u_0 \rvert^N = \left\lvert u'_0\right\rvert^N 
       & = \left \lvert \fint_{\mathbb{S}^{N-1}} \partial_r u \, d \sigma\left(\theta \right)\right \rvert^N\\
       &\le  \fint_{\mathbb{S}^{N-1}} \left \lvert \partial_r u \right\rvert^N \, d \sigma\left(\theta\right) \le \fint_{\mathbb{S}^{N-1}} \left(\left \lvert \partial_r u \right\rvert^2 +\frac{1}{r^2} \left\lvert \nabla_{\theta} u \right\rvert^2 \right)^{N/2} \, d \sigma\left(\theta\right).
    \end{align*}
    Thus we have,
    \begin{align*}
            \int_{\mathbb{B}_N} \left \lvert \nabla u_0 \right\rvert^N X_1^{-N +1} \, dx =    \omega_{N-1} \int_0^1 \lvert u_0'(r) \rvert^N r^{N-1} X_1^{-N+1} \, dr \le  \int_{\mathbb{B}_N} \left \lvert \nabla u \right\rvert^N X_1^{-N +1}.
    \end{align*}
    This proves \eqref{grad u_0 estimate}. Finally, we estimate $I_1$ as follows,
    \begin{align*}
        I_1 &= \left(\fint_{\mathbb{B}_N} \left \lvert u_0 X_2^{1/N'} X_1^{-1/N'} \right\rvert^q \, dx\right)^{1/q}\\
        &\stackrel{\eqref{u_0 estimate}}{\le} c_4(N) \left(\fint_{\mathbb{B}_N} X_1^{-q(1-1/N)} \, dx \right)^{1/q} \left(\int_{\mathbb{B}_N} \left \lvert \nabla u_0 \right\rvert^N X_1^{-N +1} \, dx\right)^{1/N}\\
        &\le c_4(N) \frac{e^{N/q}}{N^{1-1/N}} \left[\Gamma \left(1+q \frac{N-1}{N}\right)\right]^{1/q},
    \end{align*}
   To derive the last inequality we have used \eqref{grad u_0 estimate} along with $\left \lVert \nabla u \right\rVert_{N, w_{21}} \le 1$ and 
 $$\fint_{\mathbb{B}_N} X_1^{-q(1-1/N)} \, dx  \le \frac{e^{N}}{N^{q(1-1/N)}} \Gamma \left(1+q \frac{N-1}{N}\right). $$
Now combining this estimate of $I_1$ and the estimate of $I_2$ in \eqref{non radial moser proof 2} with \eqref{non radial moser proof 1} we derive for any $q>N$
\begin{align}\label{non radial moser proof 3}
\left(\fint_{\mathbb{B}_N} \left \lvert u X_2^{1/N'} X_1^{-1/N'} \right\rvert^q \, dx\right)^{\frac{1}{q}} \leq c_N \left( q^{1/N'} + e^{\frac{N}{q}} \Gamma^{\frac{1}{q}} \left(1+q \frac{N-1}{N}\right)\right).
\end{align}
Now for large $q$, we use Stirling's approximation $\Gamma(1+x) \sim x^xe^{-x}$ in \eqref{non radial moser proof 3} to obtain  
\begin{align*}
\left(\fint_{\mathbb{B}_N} \left \lvert u X_2^{1/N'} X_1^{-1/N'} \right\rvert^q \, dx\right)^{\frac{1}{q}} \leq c_N  q^{\frac{1}{N'}}.
\end{align*}
Now choosing $q=kN'$ and summing over $k\in \mathbb{N}$ we conclude the proof of \eqref{equation non radial moser}.
\par Next we derive \eqref{equation non radial moser optimal}.  \emph{First assume that  $a=0$ i.e. $f$ satisfies}    $fX_2^{-1}X_1 \to \infty$ as $ x \to 0.$ Let $\delta>0$ be such that $B(0,\delta) \subset \Omega.$ For $r_1>0$, consider the  function given by,
    \begin{align}
        u_{r_1}(x) = \begin{cases}
            \left(\ln \ln \frac{e \delta}{r_1}\right)^{1/N'}, \quad 0 \le \lvert x\rvert < r_1,\\
            \frac{\ln \ln \frac{e \delta}{\lvert x \rvert}}{\left(\ln \ln \frac{e\delta}{r_1}\right)^{1/N}}, \quad r_1 \le \lvert x \rvert <\delta,\\
            0 , \quad \lvert x \rvert \ge \delta.
        \end{cases}
    \end{align}

 Then by Lemma \ref{contains lip functions}, $u_{r_1} \in W^{1,N}_0 \left (\Omega, w_{21}\right)$ and 
  \begin{align*}
      \int_{\Omega} \left\lvert \nabla u_{r_1}(r) \right\rvert^N X_1^{-N+1} \, dx &= \frac{\omega_{N-1}}{\ln \ln \frac{e \delta}{r_1}} \int_{r_1}^{\delta} \frac{\left(\ln \frac{e R_{\Omega}}{r}\right)^{N-1}}{\left(\ln \frac{e \delta}{r}\right)^N} \frac{dr}{r}\\
      &=\frac{\omega_{N-1}}{\ln \ln \frac{e \delta}{r_1}} \int_{1}^{\ln \frac{e \delta}{r_1}} \frac{\left(\ln \frac{R_{\Omega}}{\delta} + z\right)^{N-1}}{z^N} \, dz\\
      &\le \frac{2^{N-2}\omega_{N-1}}{\ln \ln \frac{e \delta}{r_1}}  \int_{1}^{\ln \frac{e \delta}{r_1}} \left(\left(\ln \frac{R_{\Omega}}{\delta}\right)^{N-1} \frac{1}{z^N} + \frac{1}{z}\right) \le M.
  \end{align*}
Here $M \equiv M\left(N, \Omega\right)>0$ is a constant.  Then clearly, $v_{r_1} \coloneqq u_{r_1}/M^{1/N}$ belongs to $W^{1,N}_0 \left (\Omega, w_{21}\right) $ and satisfies $\left\lVert \nabla v_{r_1}\right\rVert_{N, w_{21}} \leq 1.$  

Now for any $\alpha>0$, denote $\tilde{\alpha}:= \alpha/M^{N'/N}$ and consider the integral
\begin{align*}
    I &\coloneqq \int_{\Omega} e^{\alpha \lvert v_{r_1} \rvert^{N'} f(x)} \, dx\ge\int_{B(0,r_1)} e^{\Tilde{\alpha} \ln \ln \frac{e \delta}{r_1} X_2 X_1^{-1} \frac{f(x)}{X_2 X_1^{-1}}} \, dx.
    \end{align*}
    Now since $f(x)X^{-1}_2X_1 \to \infty$ as $x \to 0$, so taking $r_1$ small enough yields $fX^{-1}_2X_1 \ge (N+2)/\Tilde{\alpha}$ in $B(0,r_1)$. Therefore, we have
    \begin{align*}
         I&\ge \int_{B(0,r_1)} e^{(N+2) \ln \ln \frac{e \delta}{r_1} X_2 X_1^{-1}} \, dx \\&= \omega_{N-1} \int_{0}^{r_1} e^{(N+2) \ln \ln \frac{e \delta}{r_1} \frac{\ln \frac{e R_{\Omega}}{r}}{1+ \ln \ln \frac{e R_{\Omega}}{r}}} r^{N-1} \, dr\\
    &= c_{N} \int_{\ln \frac{e R_{\Omega}}{r_1}}^{\infty} e^{(N+2) \ln \ln \frac{e \delta}{r_1} \frac{z}{1+ \ln z}} e^{-Nz} \, dz\\
    &\ge c_N \int_{\ln \frac{e R_{\Omega}}{r_1}}^{1+\ln \frac{e R_{\Omega}}{r_1}} \exp \left[z \left((N+2) \frac{\ln \ln  \frac{e R_{\Omega}}{r_1}}{1+ \ln z} - N\right) \right] \, dz\\
    &\ge  c_N \int_{\ln \frac{e R_{\Omega}}{r_1}}^{1+\ln \frac{e R_{\Omega}}{r_1}} \exp \left[z \left((N+2) \frac{\ln \ln  \frac{e R_{\Omega}}{r_1}}{1+ \ln \left(1+\ln \frac{e R_{\Omega}}{r_1}\right)} - N\right) \right] \, dz.
    \end{align*}
    Taking $r_1$ small enough yields $(N+2) \frac{\ln \ln  \frac{e R_{\Omega}}{r_1}}{1+ \ln \left(1+\ln \frac{e R_{\Omega}}{r_1}\right)} \ge N+1$. Hence we have,
    \begin{align*}
         I&\ge  c_N \int_{\ln \frac{e R_{\Omega}}{r_1}}^{1+\ln \frac{e R_{\Omega}}{r_1}} \exp \left[z \left(N+1- N\right) \right] \, dz= c_N (e-1) \frac{e R_{\Omega}}{r_1} \to \infty,
\end{align*}
as $r_1 \to 0$ proving \eqref{equation non radial moser optimal} in this case.

 \par \emph{Next we assume that $a\in \Omega \setminus \lbrace 0\rbrace$ and $f$ satisfies} $f(x)X_2^{-1}X_1 \to \infty$ as $x \to a$, which implies $f(x) \to \infty$ as $x\to a$. Let $0< \delta <\lvert a \rvert/2$ be such that $B(a,\delta) \subset \Omega \setminus \{0\}$. Now consider the family of functions $u_n$ defined as,
\begin{align*}
    u_n(x) = \begin{cases}
         \left(\ln n\right)^{1/N'} , \quad \lvert x-a \rvert \le \frac{\delta}{n} \\
         \frac{\ln \frac{\delta}{\lvert x -a \rvert}}{\left(\ln n \right)^{1/N}}, \quad \frac{\delta}{n} \le  \lvert x-a \rvert < \delta\\
        0, \quad \lvert x - a \rvert \ge \delta
    \end{cases}
\end{align*}
Again by Lemma \ref{contains lip functions}, $u_{n} \in W^{1,N}_0 \left (\Omega, w_{21}\right)$. Also, using the fact $|x|>|a|/2$, we derive $\left\lVert \nabla u_{n}\right\rVert_{N, w_{21}} \leq M$, for some constant $M\equiv M \left(N, \Omega, a\right)>0$.  So we normalize $u_n$ by considering $v_{n}=u_{n}/M$ to have $\left\lVert \nabla v_{n}\right\rVert_{N, w_{21}} \leq 1$. Now for any $\alpha>0$, we denote $\tilde{\alpha}:= \alpha/M^{N'}$ and estimate, 
 \begin{align*}
      \int_{\Omega} e^{\alpha \lvert v_n \rvert^{N'} f(x)} \, dx \ge \int_{B\left(a,\frac{\delta}{n}\right)} e^{\tilde{\alpha} f(x) \ln n } \, dx.
     \end{align*}
     Since $f(x) \to \infty$ as $x \to a,$ so taking $n$ large enough yields $f(x) \ge \frac{N+1}{\tilde{\alpha}}$ on $B\left(a,\frac{\delta}{n}\right).$ Thus we have,
     \begin{align*}
        \int_{\Omega} e^{\alpha \lvert v_n \rvert^{N'} f(x)} \, dx \ge \int_{B\left(a,\frac{\delta}{n}\right)} e^{(N+1) \ln n} \, dx\to \infty \mbox{ as } n \to \infty.
 \end{align*}
This proves \eqref{equation non radial moser optimal} in this case.
 \par \emph{Finally, we assume $a\in \partial \Omega$ and  $f$ satisfies} $f(x)X_2^{-1}X_1 \to \infty$ as $x \to a,$ which implies $f(x) \to \infty$ as $x\to a$. Let $x_p \in \Omega$ be such that $x_p \to a$ as $p \to \infty$.  Also, suppose $0<\delta_p < \min\{1/4, \lvert x_p \rvert/2\}$ is small enough such that $B(x_p, \delta_p) \subset \Omega$. Now consider the family of functions given by,
\begin{align*}
       v_{n,p}(x) \coloneqq \begin{cases}
        \left(\ln n\right)^{1/N'}, \quad \lvert x - x_p \rvert < \frac{\delta_p}{n}\\
        \frac{\ln  \frac{\delta_p}{\lvert x - x_p \rvert}}{\left(\ln n\right)^{1/N}}, \quad \frac{\delta_p}{n} \le \lvert x - x_p \rvert < \delta_p\\
        0 \quad \quad \quad, \quad \delta_p \le \lvert x - x_p \rvert.
    \end{cases}
    \end{align*}
 By Lemma \ref{contains lip functions}, $v_{n,p} \in W^{1,N}_0 \left (\Omega, w_{21}\right)$. Since $x_p \to a$ as $p \to \infty$, so there exists $k_1 \in \mathbb{N}$ such that $\lvert x_p \rvert \ge \lvert a \rvert /2>0,$ for all $p \ge k_1$. Thus for $x \in B(x_p, \delta_p)$ and $p\ge k_1$ we have
\begin{align*}
\lvert x \rvert \ge \lvert x_p \rvert - \lvert x-x_p \rvert &\ge \lvert x_p \rvert - \delta_p\ge \lvert x_p \rvert - \frac{\lvert x_p \rvert}{2} = \frac{\lvert x_p \rvert}{2} \ge  \frac{\lvert a \rvert}{4}.
\end{align*}
This implies $\left\lVert \nabla v_{n, p}\right\rVert_{N, w_{21}} \leq M$, for all $n\geq 1$ and $p\geq k_1$, where  $M\equiv M \left(N, \Omega, a\right)>0$ is a constant. Now we consider $u_{n,p}(x) =  v_{n,p}/M$. Then $\left\lVert \nabla u_{n, p}\right\rVert_{N, w_{21}} \leq 1$. Now for any $\alpha>0$, we denote $\tilde{\alpha}:= \alpha/M^{N'}$ and consider the integral,
\begin{align*}
    I & \coloneqq  \int_{\Omega} e^{\alpha \lvert v_{n,p} \rvert^{N'} f(x)} \, dx \geq  \int_{B\left(x_p,\delta_p/n\right)} e^{\tilde{\alpha} \ln n f(x)} \, dx.
    \end{align*}
 Since $f(x) \to \infty$ as $x \to a$ so $f \ge (N+1)/\tilde{\alpha}$ in $B(a, \varepsilon) \cap \Omega$ for $\varepsilon>0$ small enough. Now we choose $k_2\geq k_1$ such that  for any $n, p \ge k_2$ and $x\in B\left(x_p, \delta_p/n\right)$ we have $\lvert x - a \rvert \le \lvert x - x_p \rvert + \lvert x_p -a \rvert \le \delta_p/n + \varepsilon/4 \le 1/4n + \varepsilon/4 \le  \varepsilon/2$.
\par Thus in particular we have,  $f \ge (N+1)/\tilde{\alpha}$ in $B\left(x_p, \frac{\delta_p}{n}\right)$, for any $n, p \geq k_2$, which  implies
\begin{align*}
    I&\ge \int_{B\left(x_p,\delta_p/n\right)} e^{(N+1) \ln n} \, dx \to \infty \mbox{ as } n \to \infty.
\end{align*}
This completes the proof of \eqref{equation non radial moser optimal}. \\
\end{proof}

\begin{proof}[Proof of Proposition \ref{moser}]
First we consider, $\alpha \le  N \omega_{N-1}^{\frac{1}{N-1}}$. We only need to consider radial functions $u\in C^1_{c}\left(\mathbb{B}_N \setminus \lbrace 0\rbrace \right).$ Using $item (iii)$ of Lemma \ref{radial lemma}, we have  
     \begin{align*}
          \int_{\mathbb{B}_N} e^{\alpha \lvert u \rvert^{\frac{N}{N-1}} X_2 X_1^{-1}} \, dx &= \omega_{N-1} \int_{0}^{1} e^{\alpha u^{N'}(r) X_2(r) X_1^{-1}(r)} r^{N-1} \, dr \\
    &  \le \omega_{N-1} \int_{0}^{1} e^{\alpha\omega_{N-1}^{-\frac{1}{N-1}}\frac{\ln \ln\frac{e}{r}}{1 + \ln \ln\frac{e}{r}}\ln\frac{e}{r}} r^{N-1} \, dr\\
  &= \omega_{N-1} e^N \int_{1}^{\infty} e^{\alpha \omega_{N-1}^{-\frac{1}{N-1}}\frac{\ln z}{1+ \ln z}z - Nz} \, dz.
     \end{align*}
     The above integral converges if and only if $\alpha \omega_{N-1}^{-\frac{1}{N-1}} \le N.$
    \par Next we consider, $\alpha >  N \omega_{N-1}^{\frac{1}{N-1}}$. For $0<r_1<1$, we define
    \begin{align}\label{radial ltm function}
       \eta_{r_1}(x) \coloneqq \begin{cases}
        \frac{1}{\omega_{N-1}^{1/N}} \left(\ln \ln \frac{e}{r_1}\right)^{1/N'}, \quad 0 \le \lvert x \rvert < r_1\\
        \frac{1}{\omega_{N-1}^{1/N}} \frac{\ln \ln \frac{e}{\lvert x \rvert}}{\left(\ln \ln \frac{e}{r_1}\right)^{1/N}}, \quad r_1 \le \lvert x \rvert \le 1.
    \end{cases}
    \end{align} 
By Lemma \ref{contains lip functions}, $\eta_{r_1} \in W^{1,N}_0\left(\mathbb{B}_N, w_{21}\right)$ and
\begin{align*}
    \nabla \eta_{r_1}(x) = \begin{cases}
        0, \quad 0 < \lvert x\rvert < r_1\\
        -\frac{1}{\omega_{N-1}^{1/N}} \frac{1}{\left(\ln \ln \frac{e}{r_1}\right)^{1/N}} \frac{x}{\lvert x\rvert^{2} \ln \frac{e}{\lvert x \rvert}}, \quad r_1 < \lvert x \rvert < 1.
        \end{cases}
\end{align*}

This implies,
\begin{align*}
 \int_{\mathbb{B}_N} \left\lvert \nabla \eta_{r_1} \right\rvert^N X_1^{-N+1} \, dx &= \frac{1}{\ln \ln \frac{e}{r_1}} \int_{r_1}^{1} \frac{r^{N-1}}{r^{N} \left(\ln \frac{e}{r}\right)^N} \left( \ln \frac{e}{r}\right)^{N-1} \, dr\\
    &= \frac{1}{\ln \ln \frac{e}{r_1}} \int_{r_1}^{1} \frac{1}{r \ln \frac{e}{r}} \, dr =1.
\end{align*}
On the other hand, the integral,
\begin{align}\label{moser prop 1}
\frac{1}{\omega_{N-1}}\int_{\mathbb{B}_N} e^{\alpha \left\lvert \eta_{r_1} \right\rvert^{N'} X_2X_1^{-1}}\geq   \int_{0}^{r_1} e^{\alpha \eta_{r_1}^{N'}(r) X_2(r) X_1^{-1}(r)} r^{N-1} \, dr \eqqcolon I_{r_1}.
\end{align}

Note that $X_2 X_1^{-1}$ is a decreasing function in $(0,1)$. So we have
$$X_2(r) X_1^{-1}(r) \ge \frac{\ln \frac{e}{r_1}}{1 + \ln \ln \frac{e}{r_1}} \eqqcolon f(r_1), \text{ for all } r \in (0,r_1).$$
Thus,
\begin{align*}
    I_{r_1}  \ge \int_{0}^{r_1} e^{ \alpha \omega_{N-1} ^ { - \frac{1}{N-1}} \ln \frac{e}{r_1}f(r_1)} r^{N-1} \, dr.
\end{align*}
Since $\alpha > N \omega_{N-1}^{\frac{1}{N-1}} $, so for some $\varepsilon\equiv \varepsilon(\alpha, N)>0 $,  $\alpha \omega_{N-1} ^ { - \frac{1}{N-1}}= N+\varepsilon$. Thus
\begin{align*}
    I_{r_1} & \ge \frac{1}{N} e^{(N+\varepsilon) \ln \frac{e}{r_1} f(r_1)} r_1^N = \frac{e^N}{N}e^{\left((N+ \varepsilon) f(r_1)-N\right) \ln \frac{e}{r_1}}  \to \infty 
\end{align*}
as $r_1 \to 0.$ So the proof of \eqref{equation sharp radial moser} follows form \eqref{moser prop 1}.
 \par The existence of maximizing sequence follows from the dominated convergence theorem. We skip the details. This completes the proof. \\
\end{proof}

\begin{proof}[Proof of \Cref{weak moser}]
    We will only prove that the L.H.S. of \eqref{weak thor} is infinite for $\alpha \ge  N \omega_{N-1}^{\frac{1}{N-1}}.$ The rest follows from $item (iii)$ of Lemma \ref{radial lemma}. To this end, we again consider the same family of functions as defined in \eqref{radial ltm function}. Then we have
\begin{align}\label{weak moser thm 1}
\frac{1}{\omega_{N-1}}\int_{\mathbb{B}_N} e^{\alpha \left\lvert \eta_{r_1} \right\rvert^{N'} Y_2X_1^{-1}}
&\geq  \int_{0}^{r_1} e^{ \alpha \omega_{N-1}^{-\frac{1}{N-1}}  \frac{\ln \ln \frac{e}{r_1}}{\ln \ln \frac{e}{r}}X_1^{-1}(r)} r^{N-1} \, dr \notag \\ 
&\geq  \int_{0}^{r_1} e^{N \frac{\ln \ln \frac{e}{r_1}}{\ln \ln \frac{e}{r}} \ln \frac{e}{r}} r^{N-1} \, dr \eqqcolon I_{r_1}.
\end{align}
Now using the change of variable $z = \ln \frac{e}{r}$ and denoting $\ln \frac{e}{r_1}$ as $z_1$, we obtain
\begin{align*}
    I_{r_1} = e^N \int_{z_1}^{\infty} e^{N \frac{\ln z_1}{\ln z}z} e^{-Nz} \, dz.
\end{align*}

Again using the substitution $v= \ln z$ and denoting $\ln z_1$ as $v_1$, we ended up with,
\begin{align*}
    I_{r_1} &= e^N \int_{v_1}^{\infty} \exp \left(\frac{N v_1}{v}e^v - N e^v + v\right) \, dv \ge e^N \int_{v_1}^{v_1 +1} \exp \left(N e^v\frac{ v_1 - v}{v} + v\right) \, dv.
\end{align*}
Now, putting $w = v -v_1 $, we have.
\begin{align*}
    I_{r_1}& \ge \int_{0}^{1} \exp \left(-N w \frac{e^{v_1 + w}}{v_1 + w} +v_1 +w \right) \, dw\\
    & \ge \int_{0}^{1} \exp \left(-N w \frac{e^{v_1 + w}}{v_1} +v_1 +w \right) \, dw\\
    &=  \int_{0}^{1} \exp \left(-N w \frac{e^{v_1 + w}}{v_1} +v_1 +w \right) \frac{v_1}{N(w+1)} \frac{N(w+1)}{v_1}\, dw\\
    & \ge \frac{v_1}{2 N}  \int_{0}^{1} \exp \left(-N w \frac{e^{v_1 + w}}{v_1} +v_1 +w \right) \frac{N(w+1)}{v_1}\, dw.
\end{align*}
Finally, using the change of variable $Y = N e^{v_1 + w}w/v_1 $, we have
\begin{align*}
    I_{r_1} &\ge \frac{v_1}{2 N}  \int_{0}^{ N\frac{e^{v_1 +1}}{v_1}} e^{-Y} \, dY= \frac{v_1}{2 N} \left[1 - \exp \left(- N\frac{e^{v_1+1}}{v_1}\right) \right],
\end{align*}
which goes to $\infty$ as $v_1 \to \infty.$ Now taking $r_1 \to 0$ in \eqref{weak moser thm 1} and noticing that $\lim_{r_1\to 0} v_1 =\infty$, we conclude the proof.\\ 
\end{proof}

\begin{proof}[Proof of \Cref{lerray weak}]
  We consider the family of functions given by,
\begin{align*}
       v_{n,p}(x) \coloneqq \begin{cases}
        \frac{1}{\omega_{N-1}^{1/N}} \left(\ln n\right)^{1/N'}, \quad \lvert x - x_p \rvert < \frac{p}{n}\\
        \frac{1}{\omega_{N-1}^{1/N}} \frac{\ln  \frac{p}{\lvert x - x_p \rvert}}{\left(\ln n\right)^{1/N}}, \quad \frac{p}{n} \le \lvert x - x_p \rvert < p \\
        0 \quad \quad \quad, \quad p \le \lvert x - x_p \rvert
    \end{cases}
    \end{align*}
where $x_p=(1-p,0,0,...0) \in \mathbb{R}^N, n \ge 3$ and $p \in (0, 1)$
will be chosen later depending upon $\alpha, \beta$. Clearly, $v_{n,p} \in W^{1,N}_0\left(\mathbb{B}_N\right)$ and  
\begin{align*}
      \nabla v_{n,p}(x) \coloneqq \begin{cases}
       0, \quad \lvert x - x_p \rvert < \frac{p}{n}\\
      - \frac{1}{\omega_{N-1}^{1/N}\left(\ln n\right)^{1/N}}  \frac{x-x_p}{\lvert x - x_p \rvert^2}, \quad \frac{p}{n} < \lvert x - x_p \rvert < p\\
        0 \quad \quad \quad, \quad p < \lvert x - x_p \rvert.
    \end{cases}
    \end{align*}
    Thus we have
    \begin{align*}
        \int_{\mathbb{B}_N} \left \lvert \nabla v_{n,p}(x) \right \rvert^N \, dx &= \int_{\frac{p}{n} < \lvert x - x_p \rvert < p} \frac{1}{ \omega_{N-1} \ln n} \frac{1}{\lvert x-x_p \rvert^N} \, dx =1.
    \end{align*}
Hence from the definition \eqref{reduced energy leray}, we have 
    \begin{align*}
       I_{N, \mathbb{B}_{N}}[v_{n,p}] \le \int_{\mathbb{B}_N} \left \lvert \nabla v_{n,p}(x) \right \rvert^N \, dx & =1.
    \end{align*}
    Now note that, for $x \in B_{p/n}(x_p)$, we have  $\lvert x \rvert \ge \lvert x_p \rvert - \lvert x-x_p \rvert > 1-p - \frac{p}{n}> 1-\frac{4}{3} p$, as $n\geq 3$. It follows that,
\begin{align} \label{third}
    \frac{1}{\ln \ln \frac{e}{\lvert x \rvert}} > \frac{1}{\ln \ln \frac{e}{1-\frac{4}{3}p}}.
\end{align}

So, for any $\alpha,\gamma>0$, using the notation $\tilde{\alpha} = \alpha \omega^{\frac{-1}{N-1}}$, we estimate
\begin{align}\label{final est prop weak leray trudinger}
    \int_{\mathbb{B}_N} e^{\alpha \lvert v_{n,p} \rvert^{\frac{N}{N-1}} \frac{1}{\left(\ln \ln \frac{e}{\lvert x \rvert}\right)^{\gamma}} } \, dx 
    &\ge \int_{B_{p/n}(x_p)} \exp \left(   \frac{\tilde{\alpha}\ln n}{\left(\ln \ln \frac{e}{\lvert x \rvert}\right)^{\gamma}}\right) \, dx \notag \\
    & \stackrel{\eqref{third}}{\ge} \int_{B_{p/n}(x_p)} \exp \left( \frac{\tilde{\alpha}\ln n}{\left(\ln \ln \frac{e}{1-\frac{4}{3}p}\right)^{\gamma}}\right) \, dx\notag \\
    &=  N \omega_{N-1}p^N  \exp \left( \frac{\tilde{\alpha}\ln n}{\left(\ln \ln \frac{e}{1-\frac{4}{3}p}\right)^{\gamma}} - N\ln n\right).
\end{align}
Now we choose $p$ such that
\begin{align*}
    p < \frac{3}{4} \left[1 - \exp \left(1- e^{\left(\frac{\Tilde{\alpha}}{N+1}\right)^{\frac{1}{\gamma}}}\right)\right] .
\end{align*}
This implies 
$\Tilde{\alpha}/  \left(\ln \left(1- \ln \left(1-\frac{4}{3}p\right)\right)\right)^{\gamma} > N+1.$ 
Finally, with this choice of $p$, letting $n \to \infty$  in \eqref{final est prop weak leray trudinger} we conclude the proof of \eqref{lerray weak fail}. \\
\end{proof}

\section{Embeddings for \texorpdfstring{$\beta\neq 1$}{}}\label{case gen beta}
In this section, we will prove \Cref{moser weighted1}, \Cref{moser weighted1/2}, and \Cref{moser beta>1}.
\begin{proof} [Proof of \Cref{moser weighted1}]
First we prove that $ W^{1,N}_{0}(\Omega, w_{1\beta}) \not \hookrightarrow L^{\psi_N}\left(\Omega\right)$.

For this, fix $\alpha > 0.$ Recall that,  $R_{\Omega} \coloneqq \sup_{x \in \Omega} \lvert x \rvert = \sup_{x \in \partial \Omega} \lvert x \rvert$. Thus there exists $a \in \partial \Omega$ such that $\lvert a \rvert = R_{\Omega}.$ So, $\ln \left(R_{\Omega}/\lvert x \rvert\right) \to 0$ as $x \to a$. We choose a sequence  $x_p \in \Omega$ such that $x_p \to a$ as $p \to \infty$. Let  $\delta_p>0$ be such that $\delta_p \to 0$ as $p \to \infty$ , and  $B(x_p, \delta_p) \subset \Omega$. Now consider the family of functions given by,
\begin{align*}
       v_{n,p}(x) \coloneqq \begin{cases}
       \left(\frac{N+1}{\alpha}\right)^{\frac{1}{N'}}   \left(\ln n\right)^{\frac{1}{N'}}, \quad \lvert x - x_p \rvert < \frac{\delta_p}{n}\\
       \left(\frac{N+1}{\alpha}\right)^{\frac{1}{N'}}   \frac{\ln  \frac{\delta_p}{\lvert x - x_p \rvert}}{\left(\ln n\right)^{\frac{1}{N}}}, \quad \frac{\delta_p}{n} \le \lvert x - x_p \rvert < \delta_p\\
        0 \quad \quad \quad, \quad \delta_p \le \lvert x - x_p \rvert.
    \end{cases}
    \end{align*}
Then by Lemma \ref{contains lip functions}, $v_{n,p}\in  W^{1,N}_{0}\left(\Omega, w_{1\beta}\right)$. Also,
\begin{align*}
      \nabla v_{n,p}(x) \coloneqq \begin{cases}
       0, \quad \lvert x - x_p \rvert < \frac{\delta_p}{n}\\
       - \left(\frac{N+1}{\alpha}\right)^{\frac{1}{N'}} \frac{1}{\left(\ln n\right)^{1/N}}  \frac{x-x_p}{\lvert x - x_p \rvert^2}, \quad \frac{\delta_p}{n} < \lvert x - x_p \rvert < \delta_p\\
        0 \quad \quad \quad, \quad \delta_p < \lvert x - x_p \rvert.
    \end{cases}
    \end{align*}
Since $\ln \left(R_{\Omega}/\lvert x \rvert \right)\to 0$ as $x \to a$ so we have, $w_{1\beta} \le 1/\omega_{N-1}\left(\alpha/(N+1)\right)^{1/N'}$ in $B(a,\varepsilon) \cap \Omega$, for some $\varepsilon > 0$ small enough.

Since $x_p \to a$ and $\delta_p \to 0$, as $p \to \infty,$ there exists $k_1 \in \mathbb{N}$ such that for all $p \ge k_1$ we have $\lvert x_p - a \rvert < \varepsilon/2$ and $\delta_p < \varepsilon/2.$ Thus for $x \in B(x_p,\delta_p),$ we have $\lvert x - a \rvert \le \lvert x - x_p \rvert  + \lvert x_p - a \rvert \le \delta_p +  \lvert x_p - a \rvert < \varepsilon. $ Therefore, in $B(x_p,\delta_p)$ we have, $w_{1\beta} \le 1/\omega_{N-1}\left(\alpha/(N+1)\right)^{1/N'}$. 
     Thus we have,
    \begin{align*}
        \int_{\Omega} \left \lvert \nabla v_{n,p}(x) \right \rvert^N w_{1\beta} \, dx&= \int_{\frac{\delta_p}{n} < \lvert x - x_p \rvert < \delta_p}  \left(\frac{N+1}{\alpha}\right)^{\frac{1}{N'}}  \frac{1}{\ln n\, \lvert x-x_p \rvert^N} w_{1\beta}(x) \, dx\\
        &\le  \int_{\frac{\delta_p}{n} < \lvert x - x_p \rvert < \delta_p} \frac{1}{ \omega_{N-1} \ln n} \frac{1}{\lvert x-x_p \rvert^N} \, dx =1.
    \end{align*}
On the other hand,
\begin{align*}
    \int_{\Omega} e^{\alpha \lvert v_{n,p} \rvert ^{N'}} \, dx &\ge \int_{B\left(x_p, \frac{\delta_p}{n}\right)} e^{(N+1) \ln n} \, dx = \omega_{N-1} n^{N+1} \frac{\delta_p^N}{n^N} \to \infty,
\end{align*}
as $n \to \infty.$ This proves, $ W^{1,N}_{0}(\Omega, w_{1\beta}) \not \hookrightarrow L^{\psi_N}\left(\Omega\right)$.
\par \emph{Next we prove \eqref{moser trudinger weight}}. Because of \Cref{approximation} it is enough to consider $u \in C^{\infty}_{c}(\Omega\setminus\{0\})$ with $\left \lVert \nabla u \right\rVert_{N, w_{1\beta}} \le 1$. We define $v(x) \coloneqq u(x) Y_1^{-\beta/N'}$, for $x\in \Omega$. Then
\begin{align*}
    \nabla v(x) = Y_1^{-\frac{\beta}{N'}} \nabla u(x)  -\frac{\beta}{N'} Y_1^{-\frac{\beta}{N'}+1} u(x) \frac{x}{\lvert x \rvert^2}.
\end{align*}
Hence,
\begin{align*}
    \lvert \lvert  \nabla v \rvert \rvert_{L^N(\Omega)} &\le \left(\int_{\Omega} \lvert \nabla u \rvert^N w_{1\beta}\right)^{1/N} + \frac{\beta}{N'} \left(\int_{\Omega} \frac{\lvert u \rvert ^N}{\lvert x \rvert^N} Y_1^{N - (N-1) \beta} \, dx\right) ^{1/N}\\
    &\stackrel{\eqref{hardy type}}{\le} 1 + \frac{\beta}{1- \beta} = \frac{1}{1-\beta}.
\end{align*}
Therefore, by  \eqref{moser-trudinger} we establish \eqref{moser trudinger weight}. 
\par \emph{Finally, we prove \eqref{equation non radial moser optimal beta} i.e. the optimality of the weight} $Y_1^{-\beta}$. First assume that $a=0.$
 Since $\Omega$ contains the origin so, there exists $\delta>0$ such that $B(0,\delta) \subset \Omega.$ For any $0<r_1<\delta< R_{\Omega}/e$, consider the family of functions given by,
\begin{align*}
        u_{r_1}(r) \coloneqq \begin{cases}
\left[ \left(\ln \frac{e \delta}{r_1}\right)^{1-\beta}-1\right]^{\frac{1}{N'}}, \quad 0 \le r < r_1\\
            \frac{\left(\ln \frac{e \delta}{r}\right)^{1-\beta}-1}{\left[ \left(\ln \frac{e \delta}{r_1}\right)^{1-\beta}-1\right]^{\frac{1}{N}}} , \quad r_1 \le r < \delta \\
            =0, \quad r \ge \delta.
        \end{cases}
    \end{align*}
    By Lemma \ref{contains lip functions}, $u_{r_1} \in  W^{1,N}_{0}\left(\Omega, w_{1\beta}\right)$, and we have
\begin{align*}
    \int_{\Omega} &\lvert \nabla u_{r_1}(x) \rvert^N w_{1\beta} \, dx \notag\\
    &= \frac{\omega_{N-1}(1-\beta)^N}{\left(\ln \frac{e\delta}{r_1}\right)^{1-\beta}-1} \int_{r_1}^{\delta} \left(\ln \frac{e\delta}{r}\right)^{-N \beta} \left(\ln \frac{R_{\Omega}}{r}\right)^{\beta(N-1)} \frac{1}{r} \, dr \notag \\
    &\leq \frac{\omega_{N-1}(1-\beta)^N2^N}{\left(\ln \frac{e\delta}{r_1}\right)^{1-\beta}-1} \int_{r_1}^{\delta} \left(\ln \frac{e\delta}{r}\right)^{-N \beta} \left(\left(\ln \frac{R_{\Omega}}{e\delta}\right)^{\beta(N-1)} + \left(\ln \frac{e\delta}{r}\right)^{\beta(N-1)}\right) \frac{dr}{r} \notag\\
    &\leq  \frac{\omega_{N-1}(1-\beta)^N 2^{N}}{\left(\ln \frac{e\delta}{r_1}\right)^{1-\beta}-1} \int_{r_1}^{\delta} \left(\left(\ln \frac{e\delta}{r}\right)^{- \beta} \left(\ln \frac{R_{\Omega}}{e\delta}\right)^{\beta (N-1)} + \left(\ln \frac{e\delta}{r}\right)^{-\beta}\right) \frac{dr}{r}.
\end{align*}
This implies 
\begin{align}\label{derivation bddness}
 \int_{\Omega} &\lvert \nabla u_{r_1}(x) \rvert^N w_{1\beta} \, dx \leq M \equiv M\left(N, \beta, \Omega\right)>0.
\end{align}
Then clearly, the normalized variant $v_{r_1} = u_{r_1}/M^{1/N}\in W^{1,N}_0 \left (\Omega, w_{1\beta}\right) $ and satisfies $\left\lVert \nabla v_{r_1}\right\rVert_{N, w_{1\beta}} \leq 1.$ Now for any $\alpha>0$, denote $\tilde{\alpha}:= \alpha/M$. Since $f(x)Y_1^{\beta} \to \infty$, as $ x  \to 0$, so taking $r_1$ small enough yields $f(x)Y_{1}^{\beta} \ge (N+1)/\tilde{\alpha} $ on $B(0,r_1).$ Therefore we have
    \begin{align*}
       \frac{1}{\omega_{N-1}} \int_{\Omega} e^{\alpha \lvert v_{r_1} \rvert^{N'} f} &\geq  \int_{0}^{r_1} \exp \left( \Tilde{\alpha} \left(\left(\ln \frac{e\delta}{r_1}\right)^{1-\beta}-1\right)  f(x)\right) r^{N-1} \, dr\\
        &\ge \int_{0}^{r_1} \exp\left((N+1) \left(\left(\ln \frac{e\delta}{r_1}\right)^{1-\beta}-1\right) \left(\ln \frac{R_{\Omega}}{r} \right)^{\beta}\right) r^{N-1} dr\\
        &\geq \frac{1}{N} \exp\left((N+1) \left(\left(\ln \frac{e\delta}{r_1}\right)^{1-\beta}-1\right) \left(\ln \frac{R_{\Omega}}{r_1} \right)^{\beta}- N\ln \frac{1}{r_1}\right).     \end{align*}
clearly, the RHS of the above inequality goes to $\infty$, as $r_1 \to 0.$ This proves \eqref{moser trudinger weight} when $a=0$.

\par For $a\in \overline{\Omega} \setminus \lbrace 0\rbrace$, the proof of \eqref{moser trudinger weight} proceeds analogously to that of \eqref{equation non radial moser optimal} in Theorem \ref{non radial moser}. We therefore omit the details. This concludes the proof of Theorem \ref{moser weighted1}. \\
\end{proof}

\begin{proof} [Proof of \Cref{moser weighted1/2}] 
The proof of Theorem \ref{moser weighted1/2} is a straightforward adaptation of Theorem \ref{moser weighted1}. Hence, the details are omitted  \\
 \end{proof}

\begin{proof} [Proof of \Cref{moser beta>1}]
The proof of \eqref{moser trudger weight 2} proceeds analogously to that of \eqref{moser trudinger weight} in Theorem \ref{moser weighted1}.  

Next, we prove \eqref{moser trudger weight 2 entire space}. Since $\Omega$ contains the origin so there exists $\delta>0$ such that $B(0,\delta) \subset \Omega.$
 For $0<r_1<\delta$, consider the family of functions,
    \begin{align}
        u_{r_1}(r) \coloneqq \begin{cases}
            \left[1 - \left(\ln \frac{e \delta}{r_1}\right)^{1-\beta}\right]^{1/N'} , \quad 0 \le r < r_1\\
          \frac{1 - \left(\ln \frac{e \delta}{r}\right)^{1-\beta}}{\left[1 - \left(\ln \frac{e \delta}{r_1}\right)^{1-\beta}\right]^{1/N}} , \quad r_1 \le r < \delta\\
          0, \quad r\geq \delta.
        \end{cases}
    \end{align}

 By Lemma \ref{contains lip functions}, $u_{r_1} \in  W^{1,N}_{0}\left(\Omega, w_{2\beta}\right)$, and proceeding similarly to the derivation of \eqref{derivation bddness} we conclude that
\begin{align*}
  \int_{\Omega} &\lvert \nabla u_{r_1}(x) \rvert^N w_{2\beta} \, dx=   \int_{\Omega} \lvert\nabla u_{r_1} \rvert^N \left(\ln \frac{e R_{\Omega}}{\lvert x \rvert}\right)^{\beta(N-1)} \, dx \le M\equiv M(N,\beta,\Omega),
\end{align*}
for some positive constant $M$. So, the normalized function $v_{r_1} = u_{r_1}/M^{1/N}\in W^{1,N}_0 \left (\Omega, w_{2\beta}\right) $ and satisfies $\left\lVert \nabla v_{r_1}\right\rVert_{N, w_{2\beta}} \leq 1.$
Now for any $\alpha>0$, we denoting $\tilde{\alpha} = \alpha/M$ and estimate
\begin{align*}
   \int_{\Omega} e^{\alpha \lvert v_{r_1} \rvert ^{N'} \left(\ln \frac{e R_{\Omega}}{|x|}\right)^{\beta}}\, dx &\geq \omega_{N-1} \int_{0}^{\delta} e^{\alpha \lvert v_{r_1} \rvert ^{N'} \left(\ln \frac{e R_{\Omega}}{r}\right)^{\beta}} r^{N-1} \, dr\\
    &\ge \int_{0}^{r_1} \exp \left(\tilde{\alpha} \left[1- \left( \ln \frac{e \delta}{r_1}\right)^{1-\beta}\right]\left( \ln \frac{e R_{\Omega}}{r}\right)^{\beta}\right) r^{N-1} \, dr\\
    &\ge   \exp \left(\tilde{\alpha}\left[ \left( \ln \frac{e \delta}{r_1}\right)^{\beta}- \ln \frac{e \delta}{r_1}\right] - N\ln \frac{1}{r_1} \right) \to \infty,
\end{align*}
as $r_1 \to 0$ since $\beta > 1.$ This proves \eqref{moser trudger weight 2 entire space} completing the proof of Theorem \ref{moser beta>1}. \\
\end{proof}

\section{Proof of \texorpdfstring{\Cref{moser weighted3}}{}}\label{proof moser idea}

In this section we will prove \Cref{moser weighted3}. In view of  Corollary \ref{lattice}, we may assume that $u(r)$ to be non-negative. For $\varepsilon \in (0,1)$ we define,
\begin{align} \label{broken-line}
        \xi_{\varepsilon}(r) \coloneqq \begin{cases}
            \frac{1}{\omega_{N-1}^{\frac{1}{N}}(1-\beta)^{\frac{1}{N'}}} \left(\ln \frac{1}{\varepsilon}\right)^{\frac{1-\beta}{N'}} \quad 0 \le r < \varepsilon\\
            \frac{1}{\omega_{N-1}^{\frac{1}{N}}(1-\beta)^{\frac{1}{N'}}} \left[\frac{\ln \frac{1}{r}}{\left(\ln \frac{1}{\varepsilon}\right)^{1/N}}\right]^{1-\beta} \quad \varepsilon \le r <1.
        \end{cases} 
    \end{align}

First we prove \eqref{equation sharp radial moser beta}, for $\alpha =\alpha_{N,\beta}= N \omega_{N-1}^{\frac{1}{N-1}}(1-\beta) $. By Lemma \ref{contains lip functions}, we conclude that  $\xi_{\varepsilon} \in W^{1,N}_0 \left (\mathbb{B}_N, w_{1\beta}\right)$.  Here, our strategy is to demonstrate that any function $u$ satisfying the hypothesis of \Cref{moser} below, is close enough to functions of the form \eqref{broken-line} and the integrand in \eqref{equation sharp radial moser beta} bounded by some constant that depends on $N$. 

Next we consider the supercritical case i.e. $\alpha >\alpha_{N,\beta}$.  In this case, we will show that the family of functions given in \eqref{broken-line} is sufficient to conclude the result. 
\subsection{Basic Setup in the Critical case : \texorpdfstring{$\alpha = \alpha_{N,\beta}$}{}} \label{critical}
 Le $u\in C_{c, rad}^1 \left(\mathbb{B}_N\right)$ and satisfy $\left \lVert \nabla u \right\rVert^{N}_{N,w_{1\beta}}\le 1$. Then Lemma \ref{radial lemma} $(i)$, implies that 
\begin{align}\label{defi F_u beta}
   F_{u,\beta}(r)\coloneqq \omega_{N-1}(1 - \beta)^{N-1}  Y_{1}(r)^{(1-\beta)(N-1)} u^N(r) \le 1, \text{ for all }r \in (0,1).
\end{align}
Since $u \in C^1_{c, rad}\left(\mathbb{B}_N\right)$ so we have, $F_{u,\beta}(r)= 0$ for $r=0,1.$ Now we set
\begin{align} \label{i2}
    1-\delta &= \max_{r \in [0,1]} F_{u,\beta}(r)=F_{u,\beta}(r_1),
\end{align} 
for some $r_1 \in [0,1]$ and $0\leq \delta \leq 1$ depending on the function $u(r)$. We may assume that $u$ is non trivial, hence $0<r_{1}<1$ and $0\leq \delta<1$. Set
\begin{align} \label{y(r)}
    G_{u,\beta}(r):=  \omega_{N-1}^{\frac{1}{N}} (1-\beta)^{\frac{1}{N'}} \left( Y_1(r_1)\right)^{\frac{(1-\beta)}{N'}}  u(r).
\end{align}
Clearly,
\begin{align} \label{1-delta}
    1-\delta = G_{u,\beta}^{N}(r_1) \quad \text{and} \quad G_{u,\beta}(1) = 0.
\end{align}
Also, as $\left \lVert \nabla u \right\rVert_{N,w_{1\beta}}\le 1$, so we have
\begin{align*} 
    \int_{0}^{1} (1-\beta)^{-N+1} \left( \ln \frac{1}{r_1}\right)^{(1-\beta)(N-1)} \left\lvert G_{u,\beta}'(r) \right\rvert^{N} w_{1\beta}(r) r^{N-1} \, dr \le 1,
\end{align*}
which implies
\begin{align} \label{estimate on y}
     \int_{0}^{1}  \left\lvert G_{u,\beta}'(r) \right\rvert^{N} w_{1\beta}(r) r^{N-1} \, dr \le \left[\frac{1-\beta}{\left( \ln \frac{1}{r_1} \right)^{1-\beta}}\right]^{N-1}.
\end{align}
Now we prove the following crucial lemma.
\begin{lemma}
     Let $G_{u,\beta}$ be as in \eqref{y(r)}, then we have the following inequality
     \begin{align}\label{vv}
     \left(\frac{(1-\beta)G_{u,\beta}(r_1)}{ \left(\ln  \frac{1}{r_1}\right)^{1-\beta}}\right)^{N-2}&\int_{r_1}^{1} \left(  G_{u,\beta}' (r)+ \frac{1-\beta}{r \left(\ln\frac{1}{r}\right)^{\beta}} \frac{G_{u,\beta}(r_1)}{\left( \ln\frac{1}{r_1}\right)^{1-\beta}}\right)^2 Y_1^{-\beta}(r) r \, dr \notag\\
       & \hspace{-1cm}+ \int_{0}^{r_1} \left \lvert G_{u,\beta}'(r)\right\rvert^{N} Y_1^{-\beta(N - 1)}(r) r^{N-1} \, dr \le \frac{\delta (1- \beta)^{N-1}}{\left(\ln  \frac{1}{r_1}\right)^{(1-\beta)(N-1)}}. 
     \end{align}
 \end{lemma}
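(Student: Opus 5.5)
The plan is to compare $G_{u,\beta}$ on $[r_1,1]$ with the profile of the broken-line function $\xi_{\varepsilon}$ from \eqref{broken-line} at $\varepsilon=r_1$, suitably rescaled. We expand the energy \eqref{estimate on y} around that profile by a second-order convexity inequality; the linear term will vanish, and the zeroth-order term will consume exactly the fraction $1-\delta$ of the available energy. Throughout we write $L:=\ln\frac{1}{r_1}$, $G:=G_{u,\beta}$, and
\begin{align*}
K:=\frac{(1-\beta)G(r_1)}{L^{1-\beta}}, \qquad h(r):=\frac{K}{r\left(\ln \frac1r\right)^{\beta}}, \quad r\in(r_1,1).
\end{align*}
Thus $-h$ is the derivative of the extremal profile, and on $\mathbb{B}_N$ we have $w_{1\beta}(r)=Y_1^{-\beta(N-1)}(r)=(\ln\frac1r)^{\beta(N-1)}$. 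First split the left-hand side of \eqref{estimate on y} as $\int_0^{r_1}+\int_{r_1}^1$. The piece over $[0,r_1]$ is kept as it is; it is exactly the second term on the left of \eqref{vv}.

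On $[r_1,1]$ we use the elementary inequality, valid for $N\ge 2$ and real $a,b$,
\begin{align*}
|a|^N\ \ge\ |b|^N+N|b|^{N-2}b\,(a-b)+|b|^{N-2}(a-b)^2 ,
\end{align*}
applied with $a=G'(r)$, $b=-h(r)$, so that $a-b=G'+h$. We then multiply by $w_{1\beta}r^{N-1}$ and integrate over $(r_1,1)$. The weights are tailored to $h$, and three computations follow.

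\emph{Linear term.} Since $h^{N-1}w_{1\beta}r^{N-1}\equiv K^{N-1}$ is constant, the linear term equals $-NK^{N-1}\int_{r_1}^1(G'+h)\,dr$. Using $G(1)=0$ from \eqref{1-delta} and $\int_{r_1}^1h\,dr=\frac{K L^{1-\beta}}{1-\beta}=G(r_1)$, this integral is $-G(r_1)+G(r_1)=0$.

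\emph{Zeroth-order term.} We get $\int_{r_1}^1 h^Nw_{1\beta}r^{N-1}\,dr=K^{N}\int_{r_1}^1\frac{dr}{r(\ln\frac1r)^\beta}=K^{N-1}G(r_1)$. By \eqref{1-delta} this equals $\frac{(1-\beta)^{N-1}}{L^{(1-\beta)(N-1)}}(1-\delta)$.

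\emph{Quadratic term.} We have $|h|^{N-2}w_{1\beta}r^{N-1}=K^{N-2}\,r\,(\ln\frac1r)^{\beta}=K^{N-2}\,r\,Y_1^{-\beta}(r)$. This produces exactly the first term on the left of \eqref{vv}.

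Inserting these three pieces into \eqref{estimate on y} gives
\begin{align*}
\text{(first term of \eqref{vv})}+\int_0^{r_1}|G'|^NY_1^{-\beta(N-1)}r^{N-1}\,dr+\frac{(1-\beta)^{N-1}(1-\delta)}{L^{(1-\beta)(N-1)}}\ \le\ \frac{(1-\beta)^{N-1}}{L^{(1-\beta)(N-1)}}.
\end{align*}
Subtracting the last term on the left yields \eqref{vv}.

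The main obstacle is the elementary inequality with coefficient exactly $1$ in front of $|b|^{N-2}(a-b)^2$. For $N=2$ it is an identity. In general, Taylor's formula gives the remainder as $N(N-1)(a-b)^2\int_0^1(1-t)|b+t(a-b)|^{N-2}\,dt$, and one must check that this is at least $|b|^{N-2}(a-b)^2$. The first thing to try is to treat $t\mapsto b+t(a-b)$ as linear and bound the average of $|{\cdot}|^{N-2}$ against the weight $(1-t)$ from below by its value at $t=0$. If only some constant $c_N\in(0,1)$ turns out to be available, the argument goes through unchanged with $c_N$ in front of the first term of \eqref{vv}, which should suffice for the subsequent Moser-type analysis.
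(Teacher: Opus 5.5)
Your expansion around the extremal slope is a valid route, and your three computations are right: with $K=\mathcal{A}_{r_1}$ one has $h^{N-1}w_{1\beta}r^{N-1}\equiv K^{N-1}$, $\int_{r_1}^1h\,dr=G(r_1)$ and $h^{N-2}w_{1\beta}r^{N-1}=K^{N-2}\,r\,Y_1^{-\beta}$, and all integrals are finite because $\beta<1$. So everything rests on the pointwise inequality $|a|^N\ge|b|^N+N|b|^{N-2}b(a-b)+|b|^{N-2}(a-b)^2$, which you leave unproved.

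The route you suggest for it does not work as stated. For $b=1$, $a=0$ the $(1-t)$-weighted average of $|b+t(a-b)|^{N-2}=(1-t)^{N-2}$ is $2/N$, which is strictly below its value $1$ at $t=0$ once $N\ge3$. Your fallback with some $c_N<1$ would prove a weaker inequality than \eqref{vv}. That would still suffice for Lemma~\ref{moser second estimate} after enlarging $d$, but it is not the stated lemma.

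The inequality with constant $1$ is nevertheless true and elementary. It is trivial for $b=0$. Otherwise, by homogeneity and the symmetry $(a,b)\mapsto(-a,-b)$, take $b=1$ and $a=s$, where it reads
\[
g(s):=|s|^N-s^2-(N-2)s+(N-2)\ \ge\ 0 .
\]
For $s\ge0$ one has the factorization $g(s)=(s-1)\sum_{k=2}^{N-1}(s^k-1)\ge0$: each summand has the sign of $s-1$, and the sum is empty for $N=2$. For $s<0$ one has $g(s)=g(|s|)+2(N-2)|s|\ge0$. With this in place your proof is complete.

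Compared with the paper, the routes genuinely differ. The paper expands the square in the first term of \eqref{vv}, using $G(1)=0$ and $\int_{r_1}^1 Y_1^{\beta}\,dr/r=L^{1-\beta}/(1-\beta)$. This turns the left side into $\mathcal{A}_{r_1}^{N-2}\int_{r_1}^1(G')^2Y_1^{-\beta}r\,dr-(1-\beta)^{N-1}G^N(r_1)L^{-(1-\beta)(N-1)}+\int_0^{r_1}|G'|^Nw_{1\beta}r^{N-1}\,dr$. For $N>2$ it then applies H\"older twice. The first application gives $(1-\beta)^{N-1}G^N(r_1)L^{-(1-\beta)(N-1)}\le\int_{r_1}^1|G'|^Nw_{1\beta}r^{N-1}\,dr$. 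The second, with exponents $N/2$ and $N/(N-2)$, bounds $\mathcal{A}_{r_1}^{N-2}\int_{r_1}^1(G')^2Y_1^{-\beta}r\,dr$ by that same energy, and \eqref{estimate on y} finishes.

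Your version replaces both H\"older steps with a single pointwise expansion of $|\cdot|^N$ around the extremal slope $-h$. It treats $N=2$ and $N>2$ uniformly, and it shows directly why the cross term vanishes (boundary condition plus $\int h=G(r_1)$) and where the factor $1-\delta$ comes from. It also gives slightly more: the same factorization yields $g(s)\ge\frac{N-2}{2}(s-1)^2$ for all real $s$, so the first term of \eqref{vv} may carry the factor $N/2$. The paper's route, in exchange, needs nothing beyond H\"older.
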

\begin{proof}
    We have,
    \begin{align}\label{vv first est}
        A &\coloneqq \int_{r_1}^{1} \left(  G_{u,\beta}'(r) + \frac{1-\beta}{r \left(\ln\frac{1}{r}\right)^{\beta}} \frac{G_{u, \beta}(r_1)}{\left( \ln\frac{1}{r_1}\right)^{1-\beta}}\right)^2 Y_1^{-\beta}(r) r \, dr \notag \\
        & \begin{aligned}
        =\int_{r_1}^1 \left(G_{u,\beta}'(r)\right)^2 Y_1^{-\beta}(r) r \, dr &+ 2(1-\beta) \frac{G_{u,\beta}(r_1)}{\left( \ln \frac{1}{r_1}\right)^{1-\beta}} \int_{r_1}^1 G_{u,\beta}'(r) \, dr \notag\\
        & + \frac{G_{u,\beta}^2(r_1)(1-\beta)^2}{\left( \ln \frac{1}{r_1}\right)^{2(1-\beta)}}\int_{r_1}^1 \frac{Y_1^{\beta}(r) dr}{r }
        \end{aligned}\notag \\
        &\stackrel{\eqref{1-delta}}{=}  \int_{r_1}^1 \left(G_{u,\beta}'(r)\right)^2 Y_1^{-\beta}(r) r \, dr - 2(1-\beta) \frac{G_{u,\beta}^2(r_1)}{\left( \ln \frac{1}{r_1}\right)^{1-\beta}} + (1-\beta) \frac{G_{u,\beta}^2(r_1)}{\left( \ln \frac{1}{r_1}\right)^{1-\beta}} \notag \\
        &= \int_{r_1}^1 \left(G_{u,\beta}(r)'\right)^2 Y_1^{-\beta}(r) r \, dr -  \frac{(1-\beta)G_{u,\beta}^2(r_1)}{\left( \ln \frac{1}{r_1}\right)^{1-\beta}}.
    \end{align}
 If $N=2$, then the proof of \eqref{vv} follows from by combining \eqref{vv first est} with \eqref{estimate on y} and then using \eqref{1-delta}. So, for the rest of the proof we assume $N>2$. Using \eqref{1-delta} and H\"older inequality we estimate
\begin{align*}
    G_{u,\beta}(r_1) &\le  \int_{r_1}^{1} \left \lvert G_{u,\beta}'(s) \right \rvert \left(\ln \frac{1}{s} \right)^{\beta/N'} s^{1/N'} \left(\ln \frac{1}{s} \right)^{-\beta/N'} s^{-1/N'} \, ds\\
    & \le \left(\int_{r_1}^{1} \left \lvert G_{u,\beta}'(s) \right\rvert^N \left(\ln \frac{1}{s} \right)^{\beta(N-1)} s^{N-1} \, ds \right)^{1/N} \left(\int_{r_1}^{1} \frac{ds}{s \left(\ln \frac{1}{s}\right)^{\beta}}\right)^{1/N'}.
\end{align*}
Thus we have,
\begin{align} \label{y^N}
    \frac{(1-\beta)^{N-1} G_{u,\beta}^N(r_1)}{\left( \ln \frac{1}{r_1}\right)^{(1-\beta)(N-1)}} \le \int_{r_1}^{1} \left \lvert G_{u,\beta}'(r) \right\rvert^N \left(\ln \frac{1}{r} \right)^{\beta(N-1)} r^{N-1} \, dr .
\end{align}
We use the temporary shorthand $\mathcal{A}_{r_1} \coloneqq  (1-\beta)G_{u,\beta}(r_1)/{ \left(\ln  1/r_1\right)^{1-\beta}}$ and estimate using H\"older inequality with the exponent $N/2$ and $N/(N-2)$
\begin{align*}
    \int_{r_1}^1 \mathcal{A}_{r_1}^{N-2} &\left(G^{\prime}_{u,\beta}(r)\right)^2 Y_1^{-\beta}(r)  r \, dr \\ &= \int_{r_1}^1  \left(G^{\prime}_{u,\beta}(r)\right)^2 \left(Y_1(r)\right)^{\frac{-2\beta(N-1)}{N}} r^{\frac{2(N-1)}{N}} \mathcal{A}_{r_1}^{N-2}  \left(Y_1(r)\right)^{\frac{\beta(N-2)}{N}} r^{-\frac{(N-2)}{N}} \, dr\\
    & \le \left(\int_{r_1}^{1} \lvert G^{\prime}_{u,\beta}(r) \rvert^N \left(\ln \frac{1}{r} \right)^{\beta(N-1)} r^{N-1}  \, dr\right)^{2/N}  \left(A_{r_1}^N \int_{r_1}^{1} \frac{dr}{r \left(\ln \frac{1}{r}\right)^{\beta}}\right)^{\frac{N-2}{N}}\\
    &\le \left[\frac{(1-\beta)^{N-1}G_{u,\beta}^N(r_1)}{\left(\ln \frac{1}{r_1}\right)^{(1-\beta)(N-1)}}\right]^{\frac{N-2}{N}}  \left(\int_{r_1}^{1} \lvert G^{\prime}_{u,\beta}(r) \rvert^N \left(\ln \frac{1}{r} \right)^{\beta(N-1)} r^{N-1} \, dr\right)^{\frac{2}{N}} \\
    &\stackrel{\eqref{y^N}}{\le}\int_{r_1}^{1} \lvert G^{\prime}_{u,\beta}(r) \rvert^N  \left(\ln \frac{1}{r} \right)^{\beta(N-1)} r^{N-1} \, dr.
\end{align*}
Combining this with \eqref{vv first est}, we estimate the left hand side of \eqref{vv} as 
\begin{align*}
 \mbox{L.H.S. of } \eqref{vv} &\leq  \int_{r_1}^{1}  \lvert G^{\prime}_{u,\beta}(r) \rvert^N \left(\ln \frac{1}{r} \right)^{\beta(N-1)} r^{N-1} \, dr -  \frac{(1-\beta)^{N-1} G_{u,\beta}^N(r_1)}{\left( \ln \frac{1}{r_1}\right)^{(1-\beta)(N-1)}}\\
    &\hspace{3.5cm}+  \int^{r_1}_{0} \lvert G^{\prime}_{u,\beta}(r) \rvert^N \left(\ln \frac{1}{r} \right)^{\beta(N-1)} r^{N-1} \, dr\\
    &=  \int_{0}^{1} \lvert G^{\prime}_{u,\beta}(r) \rvert^N  \left(\ln \frac{1}{r} \right)^{\beta(N-1)} r^{N-1} \, dr - \frac{(1-\beta)^{N-1} G_{u,\beta}^N(r_1)}{\left( \ln \frac{1}{r_1}\right)^{(1-\beta)(N-1)}}\\
    &\stackrel{\eqref{estimate on y}}{=} \left(1-G_{u,\beta}^N(r_1)\right) \frac{(1-\beta)^{N-1} }{\left( \ln \frac{1}{r_1}\right)^{(1-\beta)(N-1)}}\stackrel{\eqref{1-delta}}{=} \frac{\delta(1-\beta)^{N-1} }{\left( \ln \frac{1}{r_1}\right)^{(1-\beta)(N-1)}}.
 \end{align*}
 This completes the proof. \\
\end{proof}

\begin{corollary} \label{delta=0}
   Let $\delta$ be as defined in \eqref{i2}. Then  $\delta >0$.
\end{corollary}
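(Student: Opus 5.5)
We argue by contradiction: suppose $\delta=0$. By \eqref{vv}, both nonnegative terms on the left-hand side must then vanish. The aim is to show that this forces $u$ to be exactly a profile of the form \eqref{broken-line}, which cannot happen for a function in $C^1_{c,rad}(\mathbb{B}_N)$.

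\textbf{The interval $(0,r_1)$.} The second term in \eqref{vv} vanishing gives $G_{u,\beta}'\equiv 0$ on $(0,r_1)$, hence $u$ is constant there. This part is harmless.

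\textbf{The interval $(r_1,1)$.} By \eqref{1-delta}, $G_{u,\beta}(r_1)=(1-\delta)^{1/N}=1>0$, so the prefactor $\mathcal{A}_{r_1}^{N-2}$ is strictly positive. The first term in \eqref{vv} vanishing therefore gives, for a.e. $r\in(r_1,1)$,
\[
G_{u,\beta}'(r)=-\frac{(1-\beta)G_{u,\beta}(r_1)}{r\left(\ln\frac1r\right)^{\beta}\left(\ln\frac1{r_1}\right)^{1-\beta}}.
\]
Integrating from $r_1$ to $r$ yields
\[
G_{u,\beta}(r)=G_{u,\beta}(r_1)\left(\frac{\ln\frac1r}{\ln\frac1{r_1}}\right)^{1-\beta}.
\]
This is consistent with $G_{u,\beta}(1)=0$. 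So $u$ equals, up to normalization, the broken-line function $\xi_{r_1}$ from \eqref{broken-line}.

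\textbf{The contradiction.} For $N=2$ the argument above applies directly with no prefactor. For $N>2$ we use the positivity of $\mathcal{A}_{r_1}$ noted above. In either case the explicit profile has
\[
u'(r)\sim -c\,\frac{1}{r\left(\ln\frac1r\right)^{\beta}}\quad\text{as } r\to1^-,
\]
which blows up. This contradicts $u\in C^1_{c,rad}(\mathbb{B}_N)$: such a $u$ has compact support in $\mathbb{B}_N$, so it vanishes identically near $r=1$, whereas the profile is strictly positive on $(r_1,1)$.

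The step needing the most care is the upgrade from "the first integral in \eqref{vv} is zero" to a pointwise identity. This holds because the integrand is a square multiplied by the positive weight $Y_1^{-\beta}r$, and $G_{u,\beta}$ is $C^1$, so the identity holds a.e. and then everywhere on $(r_1,1)$. The strictness of the final contradiction rests on compact support alone: $u>0$ on $(r_1,1)$ is impossible if $\operatorname{supp}u\subset B(0,\rho)$ with $\rho<1$. Hence $\delta>0$.
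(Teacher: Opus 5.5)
Your proof is correct and follows the paper's argument: with $\delta=0$, \eqref{vv} forces $G_{u,\beta}\equiv 1$ on $[0,r_1]$ and the explicit ODE on $(r_1,1)$, so $u$ is exactly $\xi_{r_1}$ (not merely up to normalization), which is incompatible with $u\in C^1_{c,rad}(\mathbb{B}_N)$. You spell out this final contradiction more explicitly than the paper does, via positivity of $\xi_{r_1}$ up to $r=1$ or the blow-up of its derivative there; the paper simply asserts it.
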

\begin{proof}
     If $\delta= 0$, then \eqref{vv} and \eqref{1-delta} implies $G_{u,\beta}(r) =1$, for any $r\in [0,r_1]$ and  $G_{u,\beta}$ solves the following ODE 
\begin{align*}
    y^{\prime}(r) = - \frac{1-\beta}{r \left(\ln\frac{1}{r}\right)^{\beta}\left( \ln\frac{1}{r_1}\right)^{1-\beta}}, \mbox{ in } (r_1,1),  \mbox{ and } y(1) = 0.
\end{align*}
But this implies $G_{u,\beta}(r) = \left(\ln \frac{1}{r}/\ln \frac{1}{r_1}\right)^{1-\beta}$, for any $r\in (r_1,1]$. Thus, it follows from \eqref{y(r)} that $u(r)=\xi_{r_1}(r)$. This contradicts the fact that $u\in  C_{c, rad}^1 \left(\mathbb{B}_N\right)$.
\end{proof}
Now we are ready to prove \eqref{equation sharp radial moser beta} in  \Cref{moser weighted3}. We first derive three crucial point wise estimates, which are necessary in proving Proposition \ref{second battle eq} below. We note the resemblance of these estimates with estimates $(10)$ and $(11)$ in \cite{Moser}.
\begin{lemma} \label{moser first estimate}
    Let $G_{u,\beta}$ and $r_1$ be defined as in \eqref{y(r)} and \eqref{1-delta}, respectively. Then, for all $r \in [0,r_1]$, we have
    \begin{align}\label{moser first estimate eq}
        G_{u,\beta}(r) \le 1 + \delta^{\frac{1}{N}} \left(\left(\frac{\ln \frac{1}{r}}{\ln \frac{1}{r_1}}\right)^{1-\beta}-1\right)^{\frac{1}{N'}}.
    \end{align}
\end{lemma}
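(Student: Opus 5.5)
Since $G_{u,\beta}(r_1)=(1-\delta)^{1/N}\le 1$, it suffices to bound the increment $G_{u,\beta}(r)-G_{u,\beta}(r_1)$ for $r\in[0,r_1]$ and add it to $1$. The starting point is
\begin{align*}
G_{u,\beta}(r)-G_{u,\beta}(r_1)=-\int_{r}^{r_1}G_{u,\beta}'(s)\,ds\le \int_r^{r_1}\lvert G_{u,\beta}'(s)\rvert\,ds .
\end{align*}
I will apply H\"older's inequality with exponents $N$ and $N'$ on $(r,r_1)$, splitting the integrand as
\begin{align*}
\lvert G_{u,\beta}'(s)\rvert \left(\ln \tfrac{1}{s}\right)^{\beta/N'} s^{1/N'}\cdot \left(\ln \tfrac{1}{s}\right)^{-\beta/N'} s^{-1/N'} ,
\end{align*}
in exactly the same way as in the derivation of \eqref{y^N}. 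This gives
\begin{align*}
G_{u,\beta}(r)-G_{u,\beta}(r_1)\le \left(\int_{r}^{r_1}\lvert G_{u,\beta}'\rvert^N Y_1^{-\beta(N-1)} s^{N-1}\,ds\right)^{1/N}\left(\int_r^{r_1}\frac{ds}{s\left(\ln\frac1s\right)^{\beta}}\right)^{1/N'} .
\end{align*}

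For the first factor I will use the second term on the left of \eqref{vv}. The first term there is nonnegative, since $N-2\ge0$ and $G_{u,\beta}(r_1)\ge0$, so it can be dropped. This yields
\begin{align*}
\int_0^{r_1}\lvert G_{u,\beta}'\rvert^N Y_1^{-\beta(N-1)}s^{N-1}\,ds\le \frac{\delta(1-\beta)^{N-1}}{\left(\ln\frac1{r_1}\right)^{(1-\beta)(N-1)}},
\end{align*}
and the integral over $(r,r_1)$ is bounded by the same quantity.

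The second factor is computed explicitly:
\begin{align*}
\int_r^{r_1}\frac{ds}{s(\ln\frac1s)^\beta}=\frac{(\ln\frac1r)^{1-\beta}-(\ln\frac1{r_1})^{1-\beta}}{1-\beta}.
\end{align*}

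Multiplying the two factors, the powers of $(1-\beta)$ cancel, because $(N-1)/N=1/N'$. Factoring $(\ln\frac1{r_1})^{1-\beta}$ out of the bracket then gives exactly
\begin{align*}
\delta^{1/N}\left(\left(\frac{\ln\frac1r}{\ln\frac1{r_1}}\right)^{1-\beta}-1\right)^{1/N'}.
\end{align*}
Adding $G_{u,\beta}(r_1)\le1$ completes the estimate \eqref{moser first estimate eq}. There is no real obstacle here. The only points to check are that the first term in \eqref{vv} is indeed nonnegative, so that discarding it is legitimate, and that the exponent bookkeeping in $(1-\beta)$ and $\ln\frac1{r_1}$ matches.
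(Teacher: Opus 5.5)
Your proposal is correct and follows the paper's proof essentially line for line: the same H\"older splitting on $(r,r_1)$, the bound on $\int_0^{r_1}\lvert G_{u,\beta}'\rvert^N Y_1^{-\beta(N-1)}s^{N-1}\,ds$ taken from \eqref{vv}, and the explicit evaluation of $\int_r^{r_1} \frac{ds}{s(\ln\frac1s)^{\beta}}$. The paper drops the first term of \eqref{vv} without comment; you state explicitly that it is nonnegative (using $N\ge 2$ and $G_{u,\beta}(r_1)\ge 0$).
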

\begin{proof}
    We estimate using H\"older's inequality and \eqref{1-delta}
\begin{align*}
    G_{u,\beta}(r) 
    &\le G_{u,\beta}(r_1)+  \int^{r_1}_{r} \left\lvert G^{\prime}_{u,\beta}(s) \right\rvert \left(\ln \frac{1}{s} \right)^{\frac{\beta}{N^{\prime}}} s^{\frac{1}{N^{\prime}}} \left(\ln \frac{1}{s} \right)^{-\frac{\beta}{N^{\prime}}} s^{-\frac{1}{N^{\prime}}} \, ds\\
    & \le 1+\left(\int^{r_1}_{r} \left\lvert G^{\prime}_{u,\beta}(s) \right\rvert^N \left(\ln \frac{1}{s} \right)^{\beta(N-1)} s^{N-1} \, ds \right)^{\frac{1}{N}} \left(\int^{r_1}_{r} \frac{ds}{s \left(\ln \frac{1}{s}\right)^{\beta}}\right)^{\frac{1}{N^{\prime}}}\\
    & \stackrel{\eqref{vv}}{\le} 1+ \frac{\delta^{\frac{1}{N}} (1-\beta)^{\frac{1}{N'}}}{\left(\ln  \frac{1}{r_1}\right)^{\frac{1-\beta}{N'}}} \left(\frac{\left(\ln\frac{1}{r}\right)^{1-\beta}- \left(\ln\frac{1}{r_1}\right)^{1-\beta}}{1-\beta}\right)^{\frac{1}{N'}}
\end{align*}
This completes the proof. \\
\end{proof}

\begin{lemma} \label{moser second estimate}
      Let $G_{u,\beta}$ and $r_1$ be defined as in \eqref{y(r)} and \eqref{1-delta}, respectively. Then, for all $r \in [r_1,1]$, we have
        \begin{align}\label{moser second estimate eq}
        G_{u,\beta}(r) \le \left(\frac{ \ln\frac{1}{r}}{ \ln\frac{1}{r_1}}\right)^{1-\beta} + \delta^{\frac{1}{2}} \left(1- \left(\frac{\ln \frac{1}{r}}{\ln\frac{1}{r_1}}\right)^{1-\beta}\right)^{\frac{1}{2}} G_{u,\beta}^{-\frac{N-2}{2}}(r_1).
    \end{align}
\end{lemma}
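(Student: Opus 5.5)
The plan is to mimic Moser's estimate (11), with the quadratic (Dirichlet-type) functional from \eqref{vv} in place of the $L^N$ energy. Set $\mathcal{A}_{r_1} \coloneqq (1-\beta)G_{u,\beta}(r_1)/(\ln \frac{1}{r_1})^{1-\beta}$ as in the proof of \eqref{vv}, and introduce the comparison function
\begin{align*}
\phi(r) \coloneqq G_{u,\beta}(r_1)\left(\frac{\ln \frac{1}{r}}{\ln \frac{1}{r_1}}\right)^{1-\beta}, \qquad r\in[r_1,1].
\end{align*}
A direct computation gives $\phi'(r) = -\frac{1-\beta}{r(\ln\frac1r)^{\beta}}\frac{G_{u,\beta}(r_1)}{(\ln\frac1{r_1})^{1-\beta}}$. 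Hence the integrand in the first term of \eqref{vv} is exactly $(G_{u,\beta}-\phi)'^2\,Y_1^{-\beta} r$, and $h\coloneqq G_{u,\beta}-\phi$ satisfies $h(1)=0$ because $G_{u,\beta}(1)=\phi(1)=0$ by \eqref{1-delta}.

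Next, for $r\in[r_1,1]$ I write $h(r) = -\int_r^1 h'(s)\,ds$ and apply Cauchy--Schwarz with the weight split $\left(Y_1^{-\beta}s\right)^{1/2}\left(Y_1^{-\beta}s\right)^{-1/2}$. This gives
\begin{align*}
|h(r)|^2 \le \left(\int_{r_1}^1 h'^2\,Y_1^{-\beta}(s) s\,ds\right)\left(\int_r^1 \frac{ds}{s(\ln\frac1s)^{\beta}}\right).
\end{align*}
The second factor equals $(\ln\frac1r)^{1-\beta}/(1-\beta)$. For the first factor, dropping the nonnegative second term in \eqref{vv} yields $\int_{r_1}^1 h'^2 Y_1^{-\beta}s\,ds \le \mathcal{A}_{r_1}^{-(N-2)}\,\delta(1-\beta)^{N-1}(\ln\frac1{r_1})^{-(1-\beta)(N-1)}$. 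Substituting $\mathcal{A}_{r_1}$, the powers of $(1-\beta)$ and of $\ln\frac1{r_1}$ collapse, and I expect to obtain
\begin{align*}
|h(r)|^2 \le \delta\, G_{u,\beta}^{-(N-2)}(r_1)\left(\frac{\ln\frac1r}{\ln\frac1{r_1}}\right)^{1-\beta}.
\end{align*}

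Finally, since $G_{u,\beta}(r_1)=(1-\delta)^{1/N}\le 1$, we have $\phi(r)\le (\ln\frac1r/\ln\frac1{r_1})^{1-\beta}$, and therefore $G_{u,\beta}\le \phi+|h|$.

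The main obstacle is the form of the factor: the bound above produces $\left(\ln\frac1r/\ln\frac1{r_1}\right)^{1-\beta}$, whereas the statement has $1-(\ln\frac1r/\ln\frac1{r_1})^{1-\beta}$. To get the stated factor I would instead integrate from $r_1$ to $r$, writing $h(r)=h(r_1)+\int_{r_1}^r h'$, and use $\int_{r_1}^r \frac{ds}{s(\ln\frac1s)^{\beta}} = \frac{(\ln\frac1{r_1})^{1-\beta}-(\ln\frac1r)^{1-\beta}}{1-\beta}$. After normalization this gives exactly $1-(\ln\frac1r/\ln\frac1{r_1})^{1-\beta}$, and it works because $h(r_1)=G_{u,\beta}(r_1)-\phi(r_1)=0$. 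So the correct route is Cauchy--Schwarz on $[r_1,r]$, using $h(r_1)=0$ rather than $h(1)=0$. The remaining step is to check carefully that the constants $(1-\beta)^{N-1}$, $\mathcal{A}_{r_1}^{N-2}$ and $(\ln\frac1{r_1})^{(1-\beta)(N-1)}$ cancel to leave precisely $\delta^{1/2}G_{u,\beta}^{-(N-2)/2}(r_1)$.
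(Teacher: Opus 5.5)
Your final route is exactly the paper's argument: Cauchy--Schwarz on $[r_1,r]$ applied to $h=G_{u,\beta}-\phi$ using $h(r_1)=0$, then the bound from \eqref{vv} after dropping its second term, then $G_{u,\beta}(r_1)\le 1$. The constants do cancel as you hoped, giving $|h(r)|^2\le \delta\,(1-t)\,G_{u,\beta}^{-(N-2)}(r_1)$ with $t=(\ln\frac1r/\ln\frac1{r_1})^{1-\beta}$. Your first attempt on $[r,1]$ is also valid, but it produces the factor $t$ instead of $1-t$, so you were right to switch to $[r_1,r]$ for this statement.
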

\begin{proof}
    We will again use \eqref{vv} in order to estimate $G_{u,\beta}$ in $[r_1,1]$. We have
\begin{align*}
    G_{u,\beta}(r)&=  G_{u,\beta}(r_1) \\ 
    & \hspace{1cm}+\int_{r_1}^{r} \left(  G_{u,\beta}^{\prime}(s) + \frac{(1-\beta)G_{u,\beta}(r_1)}{s \left(\ln\frac{1}{s}\right)^{\beta}\left( \ln\frac{1}{r_1}\right)^{1-\beta}} - \frac{(1-\beta)G_{u,\beta}(r_1)}{s \left(\ln\frac{1}{s}\right)^{\beta}\left( \ln\frac{1}{r_1}\right)^{1-\beta}}\right) \, ds\\
    &\begin{aligned}
      = G_{u,\beta}(r_1)+\int_{r_1}^{r} \frac{Y_1^{\frac{\beta}{2}}(s)}{\sqrt{s}}\left(  G_{u,\beta}^{\prime}(s) + \frac{(1-\beta)G_{u,\beta}(r_1)}{s \left(\ln\frac{1}{s}\right)^{\beta}\left( \ln\frac{1}{r_1}\right)^{1-\beta}} \right) &Y_1^{-\frac{\beta}{2}}(s) \sqrt{s} \, ds\\
    &\hspace{-2.5cm} - \int_{r_1}^{r} \frac{(1-\beta)G_{u,\beta}(r_1)}{s \left(\ln\frac{1}{s}\right)^{\beta}\left( \ln\frac{1}{r_1}\right)^{1-\beta}} \, ds.
    \end{aligned} 
    \end{align*}
    We now use Cauchy–Schwarz inequality, \eqref{vv}  and 
    $$\int_{r_1}^{r} \frac{(1-\beta)G_{u,\beta}(r_1)}{s \left(\ln\frac{1}{s}\right)^{\beta}\left( \ln\frac{1}{r_1}\right)^{1-\beta}} \, ds = G_{u,\beta}(r_1) \left(1 - \left(\frac{\ln \frac{1}{r}}{\ln \frac{1}{r_1}}\right)^{1-\beta}\right),
$$
to derive the estimate
    \begin{align*}
      G_{u,\beta}(r)
    &\le \left(\int_{r_1}^{r}\frac{ds}{s \left(\ln \frac{1}{s}\right)^{\beta}}\right)^{\frac{1}{2}}\left(\frac{\delta(1-\beta)}{  \left(\ln \frac{1}{r_1}\right)^{1-\beta}} \right)^{\frac{1}{2}} G_{u,\beta}^{-\frac{N-2}{2}}(r_1) + G_{u,\beta}(r_1) \left(\frac{\ln \frac{1}{r}}{\ln \frac{1}{r_1}}\right)^{1-\beta}\\
      &= G_{u,\beta}(r_1) \left(\frac{\ln \frac{1}{r}}{\ln \frac{1}{r_1}}\right)^{1-\beta}+\delta^{1/2} \left(1-\left(\frac{\ln \frac{1}{r}}{\ln \frac{1}{r_1}}\right)^{1-\beta}\right)^{1/2} G_{u,\beta}^{-\frac{N-2}{2}}(r_1).
\end{align*}
This proves \eqref{moser second estimate eq}. \\
\end{proof}
\begin{lemma} \label{moser third estimate}
Let $G_{u,\beta}$, $\delta$ and $r_1$ be defined as in \eqref{y(r)}, \eqref{i2} and \eqref{1-delta}, respectively. Suppose, $0<\delta<1/2$. Then, for all $r \in [r_1,1]$, we have        
\begin{align}\label{moser third estimate eq}
        G_{u,\beta}(r) \le \left(\frac{\ln \frac{1}{r}}{\ln \frac{1}{r_1}}\right)^{1-\beta} + (2\delta)^{1/N} \left(\frac{\ln \frac{1}{r}}{\ln \frac{1}{r_1}}\right)^{\frac{1-\beta}{N'}}.
        \end{align}
\end{lemma}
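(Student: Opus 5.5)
Throughout, write $L:=\ln\frac{1}{r_1}$ and $a:=\bigl(\ln\frac1r/L\bigr)^{1-\beta}\in[0,1]$ for $r\in[r_1,1]$, $g:=G_{u,\beta}(r)$ and $b:=G_{u,\beta}(r_1)=(1-\delta)^{1/N}$ by \eqref{1-delta}. The plan is to split the weighted energy of $G_{u,\beta}$ on $(r_1,1)$ at the point $r$. On each of the two pieces we apply H\"older's inequality exactly as in the proof of \eqref{y^N} and of Lemma \ref{moser first estimate}. The outcome is a single scalar inequality in $g$, which we then analyse by convexity.

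\emph{Step 1.} Set $E_1:=\int_{r_1}^{r}|G_{u,\beta}'|^N(\ln\frac1s)^{\beta(N-1)}s^{N-1}\,ds$ and $E_2:=\int_{r}^{1}(\cdots)$, with the same integrand. By \eqref{estimate on y} we have $E_1+E_2\le (1-\beta)^{N-1}L^{-(1-\beta)(N-1)}$. Using $\int_{r_1}^{r}\frac{ds}{s(\ln\frac1s)^\beta}=\frac{L^{1-\beta}(1-a)}{1-\beta}$ and $\int_r^1\frac{ds}{s(\ln\frac1s)^\beta}=\frac{L^{1-\beta}a}{1-\beta}$, H\"older gives two bounds:
\[
|b-g|^N\le E_1\Bigl(\tfrac{L^{1-\beta}(1-a)}{1-\beta}\Bigr)^{N-1},\qquad
g^N\le E_2\Bigl(\tfrac{L^{1-\beta}a}{1-\beta}\Bigr)^{N-1}.
\]
For the second bound we use $G_{u,\beta}(1)=0$. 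Adding the two bounds, the factors of $L$ and $1-\beta$ cancel, and we obtain
\[
h(g):=\frac{|b-g|^N}{(1-a)^{N-1}}+\frac{g^N}{a^{N-1}}\le 1 .
\]
The endpoint cases $a\in\{0,1\}$ are trivial or follow by continuity.

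\emph{Step 2.} The function $h$ is convex. A direct computation shows that its minimum is attained at $g_*=ab$, where $h(g_*)=b^N=1-\delta$. Hence $h(g)-h(g_*)\le\delta$. If $g\le ab$ there is nothing to prove, since $ab\le a$. If $g>ab$, we use two facts. First, the term $|b-g|^N/(1-a)^{N-1}$ lies above its tangent line at $g_*$. Second, for $x,y\ge0$ and $N\ge2$ we have the elementary inequality $(x+y)^N\ge x^N+Nx^{N-1}y+y^N$. Since $h'(g_*)=0$, the linear terms cancel, and we get
\[
\delta\ge h(g)-h(g_*)\ge \frac{(g-ab)^N}{a^{N-1}} .
\]

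\emph{Step 3.} Therefore $g\le ab+\delta^{1/N}a^{1/N'}\le a+(2\delta)^{1/N}a^{1/N'}$, using $b\le 1$. This is \eqref{moser third estimate eq}.

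The hypothesis $\delta<1/2$ does not appear to be needed along this route. It presumably enters later, or in an alternative argument built on Lemma \ref{moser second estimate}, where the factor $G_{u,\beta}^{-(N-2)/2}(r_1)$ must be bounded; there $\delta<1/2$ gives $b^N>1/2$. The step requiring most care is Step 2. One must verify the minimiser and the minimum value of $h$, and check that the inequality $(x+y)^N\ge x^N+Nx^{N-1}y+y^N$ applies with $x=ab$ and $y=g-ab$ after normalising by $a^{N-1}$. One must also check that the regime $g>b$, where $|b-g|$ changes sign, causes no trouble. It does not, because only convexity of that term is used.
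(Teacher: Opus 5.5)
Your proof is correct and follows the paper's argument. You use the same two H\"older estimates on $[r_1,r]$ and $[r,1]$, combined via \eqref{estimate on y} into \eqref{root}, followed by the same tangent-line bound for $|b-g|^N$ and the inequality $(x+y)^N\ge x^N+Nx^{N-1}y+y^N$; the paper phrases these through $\Gamma=(\tau+\eta)G_{u,\beta}(r_1)$ and $|1-a|^p\ge 1-pa$. The only difference is bookkeeping: the paper normalizes by $G_{u,\beta}(r_1)$, obtains $\eta^N/\tau^{N-1}\le\delta/(1-\delta)$, and uses $\delta<1/2$ to replace this by $2\delta$ after discarding the factor $G_{u,\beta}(r_1)\le 1$, whereas keeping the factor $b$ as you do gives the slightly sharper constant $\delta^{1/N}$ without that hypothesis.
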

\begin{proof}
    We may assume, $r_1< r< 1.$ Using $G_{u,\beta}(1)=0$, we write 
\begin{align*}
    G_{u,\beta}(r)  &= - \int_{r}^1  G_{u,\beta}^{\prime}(s) \left(\ln \frac{1}{s} \right)^{\frac{\beta}{N'}} s^{\frac{1}{N'}} \left(\ln \frac{1}{s} \right)^{-\frac{\beta}{N'}} s^{-\frac{1}{N'}} \, ds\\
    & \le \left(\int_{r}^{1} \left\lvert G_{u,\beta}^{\prime}(s) \right\rvert^N \left(\ln \frac{1}{s} \right)^{\beta (N-1)} s^{N-1} \, ds \right)^{\frac{1}{N}} \left(\int_{r}^{1} \frac{ds}{s \left(\ln \frac{1}{s}\right)^{\beta}}\right)^{\frac{1}{N'}}\\
    &\le \frac{1}{(1-\beta)^{\frac{1}{N'}}}\left( \ln \frac{1}{r}\right)^{\frac{1-\beta}{N'}} \left(\int_{r}^{1} \left\lvert G_{u,\beta}'(s) \right\rvert^N w_{1\beta}(s) s^{N-1} \, ds \right)^{\frac{1}{N}}.
    \end{align*}
    This implies that,
    \begin{align} \label{max}
    \frac{(1-\beta)^{N-1} G^N_{u,\beta}(r)}{\left( \ln\frac{1}{r}\right)^{(1-\beta)(N-1)}} \le \int_{r}^{1} \left\lvert G_{u,\beta}'(s) \right\rvert^N w_{1\beta}(s) s^{N-1} \, ds .
\end{align}
Also, 
\begin{align*}
    \left\lvert G_{u,\beta}(r) - G_{u,\beta}(r_1) \right\rvert^N &= \left \lvert - \int_{r_1}^{r} G_{u,\beta}'(s) \left(\ln \frac{1}{s} \right)^{\frac{\beta}{N'}} s^{\frac{1}{N'}} \left(\ln \frac{1}{s} \right)^{-\frac{\beta}{N'}} s^{-\frac{1}{N'}} \, ds \right \rvert^N\\
    &\le \left(\frac{\left(\ln\frac{1}{r_1}\right)^{1-\beta}-\left(\ln\frac{1}{r}\right)^{1-\beta}}{1-\beta}\right)^{N-1} \int_{r_1}^{r}\left \lvert G_{u,\beta}'(s) \right\rvert^N w_{1\beta} s^{N-1}.
    \end{align*}
    This implies that,
    \begin{align} \label{min}
    \frac{(1-\beta)^{N-1}\left\lvert(G_{u,\beta}(r)-G_{u,\beta}(r_1)\right\rvert^N}{\left(\left(\ln\frac{1}{r_1}\right)^{1-\beta}-\left(\ln\frac{1}{r}\right)^{1-\beta}\right)^{N-1}} &\le \int_{r_1}^{r} \left\lvert G_{u,\beta}'(s) \right\rvert^N w_{1\beta}(s) s^{N-1} \, ds.
\end{align}

Adding \eqref{max}, \eqref{min} and using \eqref{estimate on y} we have
\begin{align*}
  \frac{(1-\beta)^{N-1}G^N_{u,\beta}(r)}{\left( \ln\frac{1}{r}\right)^{(1-\beta)(N-1)}}  &+  \frac{(1-\beta)^{N-1}\left \lvert G_{u,\beta}(r)-G_{u,\beta}(r_1)\right\rvert^N}{\left(\left(\ln\frac{1}{r_1}\right)^{1-\beta}-\left(\ln\frac{1}{r}\right)^{1-\beta}\right)^{N-1}}\leq \frac{ (1- \beta)^{N-1}}{\left(\ln  \frac{1}{r_1}\right)^{(1-\beta)(N-1)}},
\end{align*}
which implies
\begin{align} \label{root}
    \frac{G^N_{u,\beta}(r)}{\left(\frac{\ln \frac{1}{r}}{ \ln\frac{1}{r_1}}\right)^{(1-\beta)(N-1)}} + \frac{\left\lvert G_{u,\beta}(r)-G_{u,\beta}(r_1) \right\rvert^N}{\left(1-\left(\frac{ \ln \frac{1}{r}}{\ln \frac{1}{r_1}}\right)^{1-\beta}\right)^{N-1}} \leq 1.
\end{align}
For fixed $r$ and $r_{1}$ with $r_1< r <1$ we use the following temporary shorthands.  
 $$\tau \coloneqq \left(\frac{ \ln \frac{1}{r}}{\ln \frac{1}{r_1}}\right)^{1-\beta}, \mbox{ and  } \Gamma \coloneqq G_{u,\beta}(r)$$  
 So, \eqref{root} takes the form,
\begin{align} \label{z}
   \left( \frac{\Gamma}{\tau}\right)^N \tau+ \left \lvert\frac{\Gamma-G_{u,\beta}(r_1)}{1-\tau}\right\rvert^N(1-\tau) \le 1.
\end{align}
Clearly, $0< \tau<1$. Let $\eta$ be such that $\Gamma=(\tau+\eta)G_{u,\beta}(r_1)$. Now, if $\eta \le 0$, then we have $\Gamma/G_{u,\beta}(r_1) = \tau+\eta \le \tau $ and this implies 
\begin{align}\label{case1 convexity}
G_{u,\beta} (r) \le \left(\frac{ \ln \frac{1}{r}}{\ln \frac{1}{r_1}}\right)^{1-\beta}.
\end{align} 
Next, we assume $\eta>0$. Substituting $\Gamma=(\tau+\eta)G_{u,\beta}(r_1)$ into \eqref{z} gives,
\[
\left(1 + \frac{\eta}{\tau}\right)^N\tau + \left \lvert 1 - \frac{\eta}{1-\tau} \right \rvert ^N (1-\tau) \le \frac{1}{G_{u,\beta}^N(r_1)} = \frac{1}{1- \delta}.
\]
Now, note that $\lvert 1- a \rvert^p \ge 1- pa$, for all $a\in \mathbb{R}$.  Using this in the above estimate, we deduce,
\begin{align*}
    \left(1+ N \frac{\eta}{\tau} + \left(\frac{\eta}{\tau}\right)^N\right)\tau + \left(1- N \frac{\eta}{1-\tau} \right)(1-\tau) \le \frac{1}{1- \delta},
    \end{align*}
which implies $\eta^N/\tau^{N-1} \le \delta/(1-\delta) < 2 \delta.$
Thus, we have 
\begin{align}\label{case2 convexity}
G_{u,\beta}(r)= \Gamma=G_{u,\beta}(r_1)\left(\tau+\eta\right) 
&\le \tau+\eta \le \tau + \left(2 \delta\right)^{\frac{1}{N}} \tau^{\frac{N-1}{N}} \notag \\
&= \left(\frac{ \ln \frac{1}{r}}{\ln \frac{1}{r_1}}\right)^{1-\beta} + \left(2 \delta\right)^{\frac{1}{N}} \left(\frac{ \ln \frac{1}{r}}{\ln \frac{1}{r_1}}\right)^{\frac{1-\beta}{N'}}.
\end{align}
Therefore, combining \eqref{case1 convexity} and \eqref{case1 convexity} we establish \eqref{moser third estimate eq}. \\
\end{proof}

Finally we are ready to prove Theorem \eqref{moser weighted3}. for small enough $\delta.$ 
\begin{proposition} \label{second battle}
  There exists $\delta_0 \equiv \delta_0(N)>0$, such that if  $ \delta$ given by \eqref{i2} satisfy $0<\delta \le \delta_{0}$, then
     \begin{align}\label{second battle eq}
        \omega_{N-1} \int_{0}^{1} e^{\alpha_{N,\beta} u^{N'}(r) X_1^{-\beta}(r)} r^{N-1} \, dr\le c_{N,\beta}, 
     \end{align}
      for some constant $c_{N,\beta}>0$ depending only on $N$ and $\beta$.
 \end{proposition}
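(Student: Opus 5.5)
The plan is to reduce everything to the single variable $L=\ln\frac1r$ and to compare the exponent with the weight $r^{N-1}\,dr=e^{-NL}\,dL$, following Moser's scheme. Write $L_1=\ln\frac1{r_1}$. Undoing \eqref{y(r)} gives
\[
\alpha_{N,\beta}\,u^{N'}X_1^{-\beta}=N\,G_{u,\beta}^{N'}(r)\,\frac{(1+L)^{\beta}}{L_1^{1-\beta}} .
\]
Since $0<\beta<1$, we have $(1+L)^\beta\le L^\beta+\beta L^{\beta-1}$ for $L\ge 1$. The correction term $\beta L^{\beta-1}L_1^{\beta-1}G^{N'}$ is bounded, because \eqref{defi F_u beta} gives $G^{N'}\lesssim (L/L_1)^{1-\beta}$. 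So up to a factor $e^{C(N,\beta)}$ it suffices to bound
\[
\int_0^\infty \exp\Big(N\,G^{N'}\frac{L^{\beta}}{L_1^{1-\beta}}-NL\Big)\,dL .
\]
Setting $\tau=(L/L_1)^{1-\beta}$, this exponent equals $NL\,(G^{N'}/\tau-1)$. The integrand over $[0,r_1]$ near the origin is easy and the work is concentrated near $r_1$.

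\emph{Region $[0,r_1]$, i.e. $\tau\ge1$.} Here I use Lemma \ref{moser first estimate}, $G\le 1+\delta^{1/N}(\tau-1)^{1/N'}$. Expanding $(1+a)^{N'}$ and applying Young's inequality to the linear term $N'\delta^{1/N}(\tau-1)^{1/N'}$ splits it as $\epsilon(\tau-1)+C_\epsilon\delta$, so $G^{N'}\le \tau-(1-\epsilon')(\tau-1)+C\delta$ (choosing $\epsilon'$ via $\delta_0$). Since $L\ge L_1$, the $(\tau-1)$ term produces a decay $e^{-cN(L-L_1)}$ that is integrable in $L$. The dangerous constant $C\delta NL_1$ must be absorbed. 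For this I would not expand around $1$ but around $G(r_1)^{N'}=(1-\delta)^{1/(N-1)}\le 1-\delta/(N-1)$, i.e. redo the H\"older step of Lemma \ref{moser first estimate} keeping $G(r_1)$ in place of $1$. The gain $-N L_1\delta/(N-1)$ then beats the loss $C\delta_0^{\cdot}\delta L_1$ once $\delta_0$ is small, giving a uniform bound on $\int_{L_1}^\infty$.

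\emph{Region $[r_1,1]$, i.e. $0<\tau<1$.} I split at a fixed $\tau_0\in(0,1)$.

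For $\tau\le\tau_0$, Lemma \ref{moser third estimate} gives $G^{N'}/\tau\le(\tau_0^{1/N}+(2\delta)^{1/N})^{N'}\le 1-c$ when $\delta_0$ is small. The integrand is then at most $e^{-cNL}$, which is integrable.

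For $\tau_0\le\tau<1$, the pointwise bound \eqref{defi F_u beta} only gives $G^{N'}\le\tau$, hence exponent $\le 0$ over an $L$-interval of length $\sim L_1(1-\tau_0)$. That is not enough, so here I would use Lemma \ref{moser second estimate}: $G\le G(r_1)\tau+\delta^{1/2}(1-\tau)^{1/2}G(r_1)^{-(N-2)/2}$. The key is to keep the factor $G(r_1)=(1-\delta)^{1/N}$ on the main term. This yields
\[
G^{N'}/\tau\le(1-\delta)^{1/(N-1)}+C\delta^{1/2}(1-\tau)^{1/2}+\text{h.o.t.}
\]
The exponent becomes roughly $NL\big(-\tfrac{\delta}{N-1}+C\delta^{1/2}(1-\tau)^{1/2}\big)$. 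One then has to show this is bounded above by $-c\,N(L_1-L)+C$, using $1-\tau\approx(1-\beta)(L_1-L)/L_1$. I would also interpolate with the third estimate, since $\min$ of the two bounds is what is actually available.

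\textbf{Main obstacle.} I expect the main obstacle to be the window $\tau\in[\tau_0,1]$ near $r_1$, where the $\delta^{1/2}$ loss from Lemma \ref{moser second estimate} competes with the linear gain $\delta L_1$. This exactly mirrors the delicate part of Moser's original argument. If the direct comparison fails, my fallback is to further subdivide according to whether $L_1-L\le K/\delta$ or not. For $L_1-L\le K/\delta$, the term $C\delta^{1/2}(1-\tau)^{1/2}L\lesssim \delta^{1/2}(L_1(L_1-L))^{1/2}$ is controlled by $\delta L_1/2+C'(L_1-L)$. For $L_1-L>K/\delta$, the third estimate already gives a negative multiple of $L_1-L$.
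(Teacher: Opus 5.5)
\textbf{There is a genuine gap: you never use that $r_1$ maximizes $F_{u,\beta}$.} By \eqref{i2}, $F_{u,\beta}\le 1-\delta$ on all of $[0,1]$, not merely $F_{u,\beta}\le 1$. Since $G_{u,\beta}^{N}=F_{u,\beta}\,\tau^{N-1}$, this gives $G_{u,\beta}^{N'}\le(1-\delta)^{\frac{1}{N-1}}\tau$, so your exponent satisfies $NL(G_{u,\beta}^{N'}/\tau-1)\le -N'\delta L$ everywhere. You only use $G_{u,\beta}^{N'}\le\tau$, and without the stronger bound both of your arguments near $r_1$ fail.

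\emph{Region $\tau\ge1$.} Young's inequality on $\delta^{1/N}(\tau-1)^{1/N'}$ loses $C_\epsilon\delta$ with $C_\epsilon$ large; it carries no extra power of $\delta_0$. Re-expanding around $G_{u,\beta}(r_1)$ cannot help, because by H\"older
\[
\bigl((1-\delta)^{1/N}+\delta^{1/N}(\tau-1)^{1/N'}\bigr)^{N'}\le\bigl((1-\delta)+\delta\bigr)^{N'/N}\,\tau=\tau ,
\]
with equality at $\tau=1/(1-\delta)$. So Lemma~\ref{moser first estimate}, even with $G_{u,\beta}(r_1)$ kept, gives no gain there, and the bound it yields integrates to order $\sqrt{\delta L_1}$ near $\tau\approx1+\delta$.

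\emph{Region $\tau_0\le\tau<1$.} You replace $G_{u,\beta}(r_1)^{N'}\tau^{N'-1}$ by $G_{u,\beta}(r_1)^{N'}$. This discards the only linear gain, $-(1-\tau^{N'-1})\approx-(1-\tau)/(N-1)$. Your exponent $NL\bigl(-\tfrac{\delta}{N-1}+C\delta^{1/2}(1-\tau)^{1/2}\bigr)$ is then positive, of size $\delta^{1/2}L_1$, when $1-\tau\sim1$, so it cannot be $\le -cN(L_1-L)+C$. Even with that gain restored, $\max_{x>0}(-cx+C\delta^{1/2}x^{1/2})=C^2\delta/(4c)$, with $C\approx N'$ and $c\approx 1/(N-1)$, is not below $\delta/(N-1)$.

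\emph{Your fallback also fails.} Lemma~\ref{moser third estimate} gives $G_{u,\beta}^{N'}/\tau\le(\tau^{1/N}+(2\delta)^{1/N})^{N'}$. This is $<1$ only when $1-\tau\gtrsim\delta^{1/N}$, i.e.\ $L_1-L\gtrsim\delta^{1/N}L_1$, not whenever $L_1-L>K/\delta$. Moreover, your AM--GM step leaves a term $+C'(L_1-L)$ with $C'$ large that nothing absorbs.

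What is needed, and what the paper does with $\Delta_1,\dots,\Delta_4$, is a split at $|\tau-1|=d\delta$, with $d=d_N$ large and $\delta_0=1/(2d)$.

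On the window $1-d\delta\le\tau\le1+d\delta$, use only the pointwise bound above. The integrand in $L$ is at most $e^{N\beta-N'\delta L}$ on an interval of length $\le c_{N,\beta}\,\delta L_1$ lying in $L\ge 2^{-1/(1-\beta)}L_1$. So this piece contributes at most $c_{N,\beta}\,\delta L_1e^{-c\delta L_1}\le c_{N,\beta}$.

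Off the window, $\delta\le|\tau-1|/d$ turns every $\delta$-error into a small multiple of $|\tau-1|$:
\begin{itemize}
\item For $\tau\ge1+d\delta$, one has $(\delta/(\tau-1))^{1/N}\le d^{-1/N}$ in Lemma~\ref{moser first estimate}. This gives $G_{u,\beta}^{N'}-\tau\le-(\tau-1)/(2N)$ with no additive constant, so there is no $\delta L_1$ left to absorb.
\item For $1/(2N')\le\tau\le1-d\delta$, one has $\delta^{1/2}\le d^{-1/2}(1-\tau)^{1/2}$ in Lemma~\ref{moser second estimate}. Together with the retained gain $-\tau(1-\tau^{N'-1})\le-\frac{N'-1}{2N'}(1-\tau)$, this gives $G_{u,\beta}^{N'}-\tau\le-\frac{1}{4N}(1-\tau)$.
\end{itemize}

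Your treatment of $\tau\le\tau_0$ via Lemma~\ref{moser third estimate} is fine, as is your handling of the $(1+L)^\beta$ correction. One typo: the reduction should read $N G_{u,\beta}^{N'}L_1^{1-\beta}(1+L)^{\beta}$; your identity $NL(G^{N'}/\tau-1)$ matches that corrected form.
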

\begin{proof}
    Throughout the proof, $c_{N,\beta}$ denotes a generic positive constant depending only on $N$ and $\beta$. Additionally, we will use the following elementary inequality.  
\begin{align} \label{elementary inequality}
    (a+b)^p \le a^p + p 2^{p-1}\left(a^{p-1}b + b^p\right) \mbox{ for any } a, b \geq 0 \mbox{ and all } p\ge 1.
\end{align} 
Assume $0<\delta<1/(2d)$, where $d\equiv d_N>1$ will be chosen later. Let $r_1$ be as defined in \eqref{1-delta}. We consider the partition of $[0,1]$ given by $\lbrace 0, r^{a_1}_1, r^{a_2}_1, r_1^{a_3}, 1 \rbrace$, where 
     \begin{align}\label{tilde s def}
       a_1 \coloneqq {\left(1+d \delta\right)^{\frac{1}{1-\beta}}}, \quad  a_2 \coloneqq {\left(1-d \delta\right)^{\frac{1}{1-\beta}}}, \quad a_3\coloneqq  {\left(\frac{1}{2N'}\right)^{\frac{1}{1-\beta}}},
     \end{align}
 and define 
 \begin{align*}
 \Delta_1\coloneqq [0, r^{a_1}_1], \quad \Delta_2 \coloneqq [ r^{a_1}_1, r^{a_2}_1],  \quad \Delta_3 \coloneqq [ r^{a_2}_1, r^{a_3}_1], \quad \Delta_4 \coloneqq [ r^{a_3}_1,1].
 \end{align*}
We will estimate the integrand in \eqref{second battle eq} separately for each region $\Delta_i$, where $i=1,\dots,4$.
\par \emph{ \text{Estimate in the region} $\Delta_1$:} Clearly $\Delta_1 \subset [0,r_1]$. Let $r\in \Delta_1$. We use the temporary shorthand $t \coloneqq \left(\ln \frac{1}{r}\big/\ln \frac{1}{r_1}\right)^{1-\beta} .$ Then using \eqref{moser first estimate eq} and \eqref{elementary inequality} we have,
\begin{align*}
    G^{N'}_{u,\beta}(r) & \le \left(1 + \delta^{\frac{1}{N}} \left( t -1 \right)^{\frac{1}{N'}}\right)^{N'} \le 1 + N'2^{N'-1} \left(\delta^{\frac{1}{N}} \left( t -1 \right)^{\frac{1}{N'}}+  \delta^{\frac{N'}{N}} (t -1)\right).
    \end{align*}
    Since $\delta<1$, this implies,
    \begin{align} \label{i1}
    G^{N'}_{u,\beta}(r) - t & \le \left(t-1\right) \left[-1 +N'2^{N'-1} \left(\frac{\delta}{t-1}\right)^{\frac{1}{N}}+ N'2^{N'-1}\delta^{\frac{1}{N}} \right].
\end{align}
Now, as  $r \in  \Delta_1=  \left[0,\tilde{s}_1\right]$, so by the definition in \eqref{tilde s def} we obtain
\[
\frac{\delta}{t-1} \le \frac{1}{d}.
\]
Using this and the bound $\delta < 1/(2d)$, we derive from \eqref{i1} that
\begin{align}\label{d choice}
     G_{u,\beta}^{N'}(r) - t & \le (t-1)\left(-1 + \frac{N'2^{N'-1}}{d^{\frac{1}{N}}} + \frac{N'2^{N'-1}}{\left(2 d \right)^{\frac{1}{N}}}\right).
\end{align}
We now make the following choice for d, which will be useful for our estimates in regions $\Delta_3$ and $\Delta_4$, in addition to the current region.
\begin{equation}\label{d exact}
\begin{aligned}
  d =d_{N}\coloneqq \max \Bigg\{ 
  &\left( \frac{NN'2^{N'}}{2N-1}\left(1+\frac{1}{2^{\frac{1}{N}}}\right)\right)^N,\left(NN'2^{2N'+2}\right)^2, \\
  &\hspace{4cm} \left(\frac{NN'2^{N'+1}}{\left(2-2^{\frac{N-2}{N-1}}\right)N-1}\right)^N 
  \Bigg\}.
\end{aligned}
\end{equation}
With this choice of $d>1$ we derive from \eqref{d choice}
\begin{align*}
G_{u,\beta}^{N'}(r) - t \leq (t-1)\left(-1 + \frac{N'2^{N'-1}}{d^{\frac{1}{N}}}\left(1 + \frac{1}{ 2^{\frac{1}{N}}}\right)\right) = \frac{ 1-\left(\frac{\ln \frac{1}{r}}{\ln \frac{1}{r_1}}\right)^{1-\beta} }{2N}.
\end{align*} 
By the above estimate together with \eqref{y(r)}, the Young's inequality and the fact that $t>1$ we estimate
\begin{align*}
   \alpha_{N,\beta} \frac{u^{N'}(r)} {X_1^{\beta}(r)}
    &= N G_{u,\beta}^{N'}(r) \left( \ln \frac{1}{r_1} \right)^{1-\beta} \left(\ln \frac{e}{r}\right)^{\beta} \\
    & =N \left(G_{u,\beta}^{N'}(r)-t\right)\left( \ln \frac{1}{r_1} \right)^{1-\beta} \left(\ln \frac{e}{r}\right)^{\beta}  + N \left( \ln \frac{1}{r} \right)^{1-\beta}  \left(\ln \frac{e}{r}\right)^{\beta}\\
    &\le \left(N - \frac{1}{2} \right) \left( \ln \frac{1}{r} \right)^{1-\beta} \left(\ln \frac{e}{r}\right)^{\beta}+ \frac{1}{2} \left( \ln \frac{1}{r_1} \right)^{1-\beta} \left(\ln \frac{e}{r}\right)^{\beta}\\
     &\le \left(N - \frac{1}{2} \right) \left[(1-\beta)\ln \frac{1}{r} + \beta \ln \frac{e}{r}\right]+ \frac{1}{2}  \left[(1-\beta)\ln \frac{1}{r_1} + \beta \ln \frac{e}{r}\right]\\
     &=N \beta \ln\frac{e}{r}+ \left(N - \frac{1}{2}\right)\left(1-\beta \right) \ln \frac{1}{r} + \frac{1-\beta}{2} \ln \frac{1}{r_1}\\ 
     &=N \beta + \left(N - \frac{1-\beta}{2}\right)\ln \frac{1}{r} + \frac{1-\beta}{2} \ln \frac{1}{r_1}.
\end{align*}
Therefore, integrating over $\Delta_1,$ we have
\begin{align}\label{bound over Delta1}
    \int_{\Delta_1}  e^{\alpha_{N,\beta} \frac{u^{N'}(r)} {X_1^{\beta}(r)}} r^{N-1} \, dr& \le e^{N \beta} \int_{0}^{r_1} e^{\left(N - \frac{1-\beta}{2}\right) \ln \frac{1}{r} + \frac{1-\beta}{2} \ln \frac{1}{r_1}} r^{N-1} \, dr \notag \\
    &= e^{N \beta} r_1^{\frac{\beta-1}{2}} \int_{0}^{r_1} r^{\frac{1-\beta}{2} -1} \, dr \le \frac{2 e^{N \beta}}{1- \beta}.
\end{align}

\par \emph{ \text{Estimate} in the region $\Delta_2$}: Let $r\in \Delta_2$.  From \eqref{i2} and the definition in \eqref{defi F_u beta} we have,
\begin{align*}
     \omega_{N-1}(1 - \beta)^{N-1}  u^N(r)  \left( \ln \frac{1}{r}\right)^{(\beta-1)(N-1)} \le 1-\delta.
\end{align*}
It follows that,
\begin{align*}
    N\omega_{N-1}^{\frac{1}{N-1}}(1-\beta)\left(\ln \frac{e}{r}\right)^{\beta} u^{N'}(r) &\le N (1-\delta)^{\frac{1}{N-1}} \left(\ln \frac{1}{r}\right)^{1-\beta} \left(\ln \frac{e}{r}\right)^{\beta}\\
    &\le N (1-\delta)^{\frac{1}{N-1}} \left((1-\beta) \ln \frac{1}{r} + \beta \ln \frac{e}{r}\right)\\
    &\le N \beta + N (1-\delta)^{\frac{1}{N-1}} \ln \frac{1}{r}.
\end{align*}
Using this, alongside the change of variables $z = \ln(1/r)$, and letting $z_1 = \ln (1/r_1)$, we deduce
\begin{align}\label{pre bound over Delta2}
    \int_{\Delta_2} e^{ \alpha_{N,\beta} \frac{u^{N'}(r)}{ X_1^{\beta}(r)}} r^{N-1} \, dr
    &\le e^{N \beta}\int_{\Delta_2} e^{N (1-\delta)^{\frac{1}{N-1}} \ln \frac{1}{r}} r^{N-1} \, dr \notag\\
    &\le e^{N \beta}\int_{a_1z_1}^{a_2 z_1} e^{N (1-\delta)^{\frac{1}{N-1}} z -Nz} \, dz \notag \\
    &\le e^{N \beta} \int_{a_1 z_1}^{a_2 z_1} e^{ -\frac{N \delta}{N-1}z} \, dz \notag \\
    &\stackrel{\eqref{tilde s def}}{\le} e^{N \beta} \left[(1+ d \delta)^{\frac{1}{1-\beta}} - (1- d \delta)^{\frac{1}{1-\beta}}\right] z_1 e^{-N'\delta (1- d \delta)^{\frac{1}{1-\beta}}z_1}, 
\end{align}
where to derive the second last inequality, we have used the fact that, $\left(1-\delta\right)^{1/(N-1)}$ is bounded from above by $1 -\delta/(N-1)$ for $N\ge 2.$
Now, using the  Mean Value Theorem, the choice of $d$ in \eqref{d exact}, and the fact that $0<\delta<1/(2d)$, we conclude that there exists a constant $c_{N,\beta}>0$  such that 
\begin{align*}
    (1+ d \delta)^{1/(1-\beta)} - (1- d \delta)^{1/(1-\beta)} \le \delta  c_{N,\beta}.
\end{align*}
Therefore, first using this estimate in \eqref{pre bound over Delta2}, and then using the fact that $xe^{-x} \le e^{-1}$ for all $x \ge 0$, we establish 
\begin{align}\label{bound over Delta2}
    \int_{\Delta_2} e^{ \alpha_{N,\beta} \frac{u^{N'}(r)}{ X_1^{\beta}(r)}} r^{N-1} \, dr \le c_{N,\beta}  \delta z_1  e^{-N'\delta z_12^{-\frac{1}{1-\beta}}} \le c_{N,\beta}.
\end{align}

\par \emph{ \text{Estimate} in the region $\Delta_3$}: Let $r\in \Delta_3 \subset [r_1,1] $. Denote $t \coloneqq \left(\frac{ \ln \frac{1}{r}}{\ln \frac{1}{r_1}}\right)^{1-\beta} $. Then clearly we have 
\begin{align} \label{t}
    \frac{1}{2N'} \le t \le 1- d \delta < 1.
\end{align} 
Thus we have 
    \begin{align} \label{t'}
         (N'-1)(1-t) \leq 1-t^{N'-1}, \mbox{ and }  \delta^{1/2} \le d^{-1/2}(1-t)^{1/2}.
    \end{align}
Now from \eqref{moser second estimate eq} we have,
\[ G_{u,\beta}(r) \le t + \delta^{1/2}(1-t)^{1/2}G_{u,\beta}^{-\frac{N-2}{2}}(r_1).
\]
Combining this with the bound $G_{u,\beta}^{-\frac{N-2}{2}}(r_1) \le 2^{\frac{N-2}{N}}\leq 2$, which follows from \eqref{1-delta} and the fact that $0 < \delta< 1/(2d)<1/2$, we estimate
\begin{align*}
 G_{u,\beta}^{N'}(r) -t & \le \left(t + 2\delta^{1/2}(1-t)^{1/2}\right)^{N'} - t\\
    &\stackrel{\eqref{elementary inequality}}{\le} t^{N'} -t + N'2^{2N'-1}\left( t^{N'-1} \delta^{\frac{1}{2}}(1-t)^{\frac{1}{2}} + \delta^{\frac{N'}{2}}(1-t)^{\frac{N'}{2}}\right)\\
    &\le  -t(1 - t^{N'-1}) +N'2^{2N'} \delta^{\frac{1}{2}}(1-t)^{\frac{1}{2}}\\
    &\stackrel{\eqref{t}, \eqref{t'}}{\le} -\frac{N'-1}{2N'}(1-t) + \frac{N'2^{2N'}}{d^{\frac{1}{2}}}(1-t)\stackrel{\eqref{d exact}}{=} - \frac{1}{4N}(1-t).
    \end{align*}
By the above estimate together with \eqref{y(r)}, we derive
\begin{align*}
   \alpha_{N,\beta} \frac{u^{N'}(r)} {X_1^{\beta}(r)}
    &= N G_{u,\beta}^{N'}(r) \left( \ln \frac{1}{r_1} \right)^{1-\beta} \left(\ln \frac{e}{r}\right)^{\beta} \\ 
    & =N \left(G_{u,\beta}^{N'}(r)-t\right)\left( \ln \frac{1}{r_1} \right)^{1-\beta} \left(\ln \frac{e}{r}\right)^{\beta}  + N \left( \ln \frac{1}{r} \right)^{1-\beta}  \left(\ln \frac{e}{r}\right)^{\beta}\\
    &\le \left(N + \frac{1}{4}\right) \left(\ln \frac{e}{r}\right)^{\beta} \left(\ln \frac{1}{r}\right)^{1-\beta}- \frac{1}{4}  \left(\ln \frac{e}{r}\right)^{\beta} \left(\ln \frac{1}{r_1}\right)^{1-\beta}\\
     &\le \left(N + \frac{1}{4}\right) \left(\beta \ln \frac{e}{r} + (1-\beta) \ln \frac{1}{r}\right)-\frac{1}{4}  \left(\ln \frac{e}{r}\right)^{\beta} \left(\ln \frac{1}{r_1}\right)^{1-\beta}\\
     &= \left(N + \frac{1}{4}\right)  \beta + \left(N + \frac{1}{4}\right) \ln \frac{1}{r}-\frac{1}{4}  \left(\ln \frac{e}{r}\right)^{\beta} \left(\ln \frac{1}{r_1}\right)^{1-\beta}.
\end{align*}
Using this, alongside the change of variables $z = \ln(1/r)$, and letting $z_1 = \ln (1/r_1)$, we deduce
\begin{align*}
      \int_{\Delta_3} e^{ \alpha_{N,\beta} \frac{u^{N'}(r)}{ X_1^{\beta}(r)}} r^{N-1} \, dr 
 &\leq c_{N,\beta} \int_{\Delta_3} \exp  \left(\frac{1}{4}\ln \frac{1}{r} - \frac{1}{4}  \left(\ln \frac{e}{r}\right)^{\beta} \left(\ln \frac{1}{r_1}\right)^{1-\beta}\right) \, \frac{dr}{r} \notag \\
    &\le c_{N,\beta} \int^{ a_2 z_1}_{a_3z_1} \exp \left(\frac{1}{4}z - \frac{1}{4}z_1^{1-\beta} \left(1+z\right)^{\beta}\right) \, dz\notag \\
    &\stackrel{\eqref{tilde s def}}{\le} c_{N,\beta} \int^{z_1}_{a_3 z_1} \exp \left(\frac{1}{4}z - \frac{1}{4}z_1^{1-\beta} z^{\beta}\right) \, dz \notag \\
    &=c_{N,\beta}z_1\int_{a_3}^{1} e^{\frac{z_1v}{4} - \frac{z_1v^{\beta}}{4}} \, dv\notag \\
    &=c_{N,\beta}z_1 \int_{a_3}^{1} e^{\frac{z_1v^{\beta}}{4} \left( v^{1-\beta} - 1\right)} \, dv \notag \\
    &\le c_{N,\beta} z_1 \int_{a_3}^{1} e^{\frac{a_3^\beta z_1}{4} \left( v^{1-\beta} - 1\right)} \, dv.
\end{align*}
Finally with the change of variables $w= z_1(v^{1-\beta}-1)$, we conclude that  
\begin{align}\label{bound over Delta3}
\int_{\Delta_3} e^{ \alpha_{N,\beta} \frac{u^{N'}(r)}{ X_1^{\beta}(r)}} r^{N-1} \, dr  \leq c_{N,\beta}.
\end{align}

   \par \emph{\text{Estimate in the region }$\Delta_4$}:
Let $r\in \Delta_4 \subset [r_1,1 ]$. We now use \eqref{moser third estimate eq} by  denoting $t \coloneqq \left(\ln \frac{1}{r}/\ln \frac{1}{r_1}\right)^{1-\beta}. $ This yields
\begin{align*} 
    G_{u,\beta}^{N'}(r) -t &\le \left(t + (2\delta)^{1/N} t^{(N-1)/N}\right)^{N'} -t\\
   &\stackrel{\eqref{elementary inequality}}{\le} t^{N'} -t+ N'2^{N'-1}\left( (2 \delta)^{1/N} t^{N'-1+\frac{1}{N'}} + (2 \delta)^{N'/N} t\right) \\
   &= t\left[t^{N'-1} -1 + N'2^{N'-1}\left((2 \delta)^{1/N} t^{N'-2+\frac{1}{N'}}  + (2 \delta)^{N'/N} \right)\right].
\end{align*}
Since $r \in \Delta_4$ so, $0 \le t \le (N-1)/(2N)$.  Also as $N' > 1$ so we have, $N'+ 1/N' > 2$.  Also, note that $0< \delta < 1/{2d} <1/2$ implies, $(2 \delta)^{N'/N} \le (2 \delta)^{1/N} < d^{-1/N}$. Thus the above estimate yields
\begin{align*}
    G_{u,\beta}^{N'}(r) -t &\le t \left( \left(\frac{N-1}{2N}\right)^{N'-1} -1 + N'2^{N'} d^{-\frac{1}{N}}\right)\\
    &\le -\frac{t}{2N} + t \left( N'2^{N'} d^{-\frac{1}{N}}- \frac{2N-1}{2N} + \frac{1}{2^{\frac{1}{N-1}}}  \right)\\
    &= -\frac{t}{2N} + t \left( N'2^{N'} d^{-\frac{1}{N}}-\frac{\left(2-2^{\frac{N-2}{N-1}}\right)N-1}{2N} \right) \stackrel{\eqref{d exact}}{\leq} -\frac{t}{2N}.
\end{align*}
By the above estimate together with \eqref{y(r)}, and Young's inequality we derive,
\begin{align*}
  \alpha_{N,\beta} \frac{u^{N'}(r)} {X_1^{\beta}(r)}= N G_{u,\beta}^{N'}(r) \left( \ln \frac{1}{r_1} \right)^{1-\beta} \left(\ln \frac{e}{r}\right)^{\beta} &\le \left(N-\frac{1}{2}\right) \left(\ln \frac{e}{r}\right)^{\beta} \left(\ln \frac{1}{r}\right)^{1-\beta}\\
    &= \left(N-\frac{1}{2}\right) \beta + \left(N-\frac{1}{2}\right) \ln \frac{1}{r}.
\end{align*}

Therefore, integrating over $\Delta_4,$ we have
\begin{align}\label{bound over Delta4}
 \int_{\Delta_4} e^{ \alpha_{N,\beta} \frac{u^{N'}(r)}{ X_1^{\beta}(r)}} r^{N-1} \, dr 
&\le c_{N,\beta} \int_{\Delta_4}  e^{\left(N-\frac{1}{2}\right) \ln \frac{1}{r}} r^{N-1} \, dr  \notag \\&\le c_{N,\beta}  \int_{r_1}^1 r^{-1/2} \, dr \le c_{N,\beta}.
\end{align}
Finally, we choose 
\begin{equation}\label{choice of delta0}
\delta_0= \frac{1}{2d},
\end{equation}
where $d$ is given by \eqref{d exact}. With this choice of $\delta_0$, we combine \eqref{bound over Delta1}, \eqref{bound over Delta2}, \eqref{bound over Delta3} and \eqref{bound over Delta4} to establish \eqref{second battle eq}. This completes the proof. \\
\end{proof}
Now we prove the boundedness for $\delta_{0} \le \delta \le1.$

\begin{proposition} \label{third battle X_1}
    Let $\delta_{0} \equiv \delta_{0}(N)$ be given by \eqref{choice of delta0}. Then for all $\delta$ satisfying \eqref{i2} and $\delta_0 \leq \delta <1$, we have
     \begin{align}\label{third battle X_1 eq}
       \omega_{N-1} \int_{0}^{1} e^{ \alpha_{N,\beta} u^{N'}(r) X_1^{-\beta}(r)} r^{N-1} \, dr\le c_{N,\beta},
     \end{align}
     for some constant $c_{N,\beta}>0$ depending only on $N$ and $\beta$.
 \end{proposition}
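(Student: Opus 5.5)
When $\delta \ge \delta_0$ the function $u$ stays uniformly below the critical profile: by \eqref{i2} and \eqref{defi F_u beta} we have $F_{u,\beta}(r)\le 1-\delta\le 1-\delta_0$ for every $r\in(0,1)$. I would turn this directly into a subcritical pointwise bound on the exponent and integrate; no Moser-type comparison with the broken-line functions is needed in this regime.

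First I rewrite $F_{u,\beta}\le 1-\delta_0$ as
$$\omega_{N-1}^{\frac{1}{N-1}}(1-\beta)\,u^{N'}(r)\le (1-\delta_0)^{\frac{1}{N-1}}\left(\ln \tfrac1r\right)^{1-\beta}.$$
Multiplying by $N X_1^{-\beta}(r)=N(\ln\frac er)^{\beta}$ and applying Young's inequality $a^{1-\beta}b^{\beta}\le(1-\beta)a+\beta b$, exactly as in the $\Delta_2$ estimate of Proposition \ref{second battle}, gives
$$\alpha_{N,\beta}\,u^{N'}(r)X_1^{-\beta}(r)\le N(1-\delta_0)^{\frac{1}{N-1}}\Bigl[(1-\beta)\ln\tfrac1r+\beta\bigl(1+\ln\tfrac1r\bigr)\Bigr]\le N\beta+N(1-\delta_0)^{\frac{1}{N-1}}\ln\tfrac1r .$$
Set $\theta:=N(1-\delta_0)^{\frac{1}{N-1}}$. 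By \eqref{choice of delta0}, $\theta$ depends only on $N$, and $\theta<N$.

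Next I integrate this bound:
$$\omega_{N-1}\int_0^1 e^{\alpha_{N,\beta}u^{N'}X_1^{-\beta}}r^{N-1}\,dr\le \omega_{N-1}e^{N\beta}\int_0^1 r^{N-1-\theta}\,dr=\frac{\omega_{N-1}e^{N\beta}}{N-\theta}.$$
The integral converges because $N-1-\theta>-1$. Using $(1-\delta_0)^{1/(N-1)}\le 1-\delta_0/(N-1)$, we get $N-\theta\ge N'\delta_0$, so the constant is at most $\omega_{N-1}e^{N\beta}/(N'\delta_0)=:c_{N,\beta}$. This constant depends only on $N$ and $\beta$, which proves \eqref{third battle X_1 eq}.

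The only step requiring care is the uniformity: the gap $1-\delta_0$ must be independent of $u$. This holds because $\delta_0$ is fixed in \eqref{choice of delta0} through $d_N$ alone. I would also note that $\delta$ is attained as a maximum at an interior point $r_1$ (by the boundary behaviour of $F_{u,\beta}$), so the bound holds on all of $(0,1)$. The borderline case $\delta=1$ is trivial, since then $u\equiv0$.
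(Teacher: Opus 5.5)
Your proof is correct and is essentially the paper's argument: the paper also turns $F_{u,\beta}\le 1-\delta\le 1-\delta_0$ into the uniform bound $\alpha_{N,\beta}u^{N'}X_1^{-\beta}\le N(1-\delta_0)^{\frac{1}{N-1}}\left(\ln\frac{1}{r}\right)^{1-\beta}\left(\ln\frac{e}{r}\right)^{\beta}$ and then integrates. The only difference is in the last step: the paper substitutes $z=\ln\frac{1}{r}$ and simply asserts that $\int_0^\infty e^{N(1-\delta_0)^{\frac{1}{N-1}}z^{1-\beta}(1+z)^{\beta}-Nz}\,dz$ is finite, whereas your Young's inequality step (the same one used in the $\Delta_2$ estimate) makes this explicit and gives the constant $\omega_{N-1}e^{N\beta}/(N'\delta_0)$.
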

\begin{proof}
     From \eqref{defi F_u beta} and \eqref{i2} we have,
    \begin{align*}
        \omega_{N-1}(1 - \beta)^{N-1} u^N(r) \left( \ln \frac{1}{r}\right)^{(\beta-1)(N-1)} &\le 1- \delta \le 1 - \delta_{0}.
        \end{align*}
        This implies that 
        \begin{align*}
      \alpha_{N,\beta} u^{N'}(r) \left( \ln \frac{e}{r}\right)^{\beta} &\le N (1-\delta_{0})^{\frac{1}{N-1}} \left(\ln \frac{1}{r}\right)^{1- \beta} \left(\ln \frac{e}{r}\right)^{\beta}.
    \end{align*}
    Therefore, we have
    \begin{align*}
       \int_{0}^{1} e^{ \alpha_{N,\beta} u^{N'}(r) X_1^{-\beta}(r)} r^{N-1} \, dr &\le  \int_{0}^{1} e^{ N (1-\delta_{0})^{\frac{1}{N-1}} \left(\ln \frac{1}{r}\right)^{1- \beta} \left(\ln \frac{e}{r}\right)^{\beta}} \, dr\\
        &=  \int_{0}^{\infty} e^{ N (1-\delta_{0})^{\frac{1}{N-1}} z^{1-\beta}(1+z)^{\beta}} e^{-Nz} \, dz \leq c_{N,\beta},
    \end{align*}
    for some constant $c_{N,\beta}>0$ depending only on $N$ and $\beta$. This complete the proof of \eqref{third battle X_1 eq}. \\
    \end{proof}

\subsection{Super-Critical case (\texorpdfstring{$\alpha > N \omega_{N-1}^{\frac{1}{N-1}}(1-\beta)  $}{})} \label{super-critical}

\begin{proposition} \label{final battle}
    For $\alpha > \alpha_{N,\beta} $, we have,
    \begin{align*}
         \sup_{\left \lbrace u \in W^{1,N}_{0,rad}\left(\mathbb{B}_{N}, w_{1\beta}\right) :\left \lVert \nabla u \right\rVert_{N, w_{1\beta}} \le 1\right\rbrace} \int_{\mathbb{B}_N} e^{\alpha \lvert u \rvert ^{N'} \left(\ln \frac{1}{\lvert x \rvert}\right)^{\beta}} \, dx = \infty.
    \end{align*}
\end{proposition}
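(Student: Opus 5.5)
We test the supremum on the broken-line family $\xi_\varepsilon$ from \eqref{broken-line}, which the paper already singles out for the supercritical case. First we check admissibility: $\xi_\varepsilon\in W^{1,N}_{0,rad}(\mathbb{B}_N,w_{1\beta})$ by Lemma \ref{contains lip functions}. The normalization $\|\nabla\xi_\varepsilon\|_{N,w_{1\beta}}\le 1$ should hold exactly. On $(\varepsilon,1)$,
\begin{align*}
\xi_\varepsilon'(r)=-\frac{(1-\beta)^{1/N}}{\omega_{N-1}^{1/N}}\left(\ln\tfrac1\varepsilon\right)^{-\frac{1-\beta}{N}}\frac{1}{r}\left(\ln\tfrac1r\right)^{-\beta},
\end{align*}
so, with $w_{1\beta}=(\ln\frac1r)^{\beta(N-1)}$,
\begin{align*}
\int_{\mathbb{B}_N}|\nabla\xi_\varepsilon|^Nw_{1\beta}\,dx=\frac{1-\beta}{(\ln\frac1\varepsilon)^{1-\beta}}\int_\varepsilon^1\frac{dr}{r(\ln\frac1r)^{\beta}}=1 .
\end{align*}
(I read the exponent in \eqref{broken-line} so that $\xi_\varepsilon$ is continuous at $r=\varepsilon$, namely $(\ln\frac1r)^{1-\beta}(\ln\frac1\varepsilon)^{-(1-\beta)/N}$; this is the extremal profile in Lemma \ref{radial lemma}(i).)

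Next we bound the integral from below by its contribution from $B(0,\varepsilon)$. There
\begin{align*}
\xi_\varepsilon^{N'}=\omega_{N-1}^{-\frac{1}{N-1}}(1-\beta)^{-1}\left(\ln\tfrac1\varepsilon\right)^{1-\beta}.
\end{align*}
Writing $\alpha=\alpha_{N,\beta}(1+\eta)$ with $\eta>0$, the exponent becomes
\begin{align*}
\alpha\xi_\varepsilon^{N'}\left(\ln\tfrac1r\right)^{\beta}=N(1+\eta)\left(\ln\tfrac1\varepsilon\right)^{1-\beta}\left(\ln\tfrac1r\right)^\beta\ge N(1+\eta)\ln\tfrac1\varepsilon \quad\text{for } r<\varepsilon .
\end{align*}
Hence
\begin{align*}
\int_{\mathbb{B}_N}e^{\alpha\xi_\varepsilon^{N'}(\ln\frac1{|x|})^\beta}\,dx\ge\omega_{N-1}\varepsilon^{-N(1+\eta)}\frac{\varepsilon^N}{N}=\frac{\omega_{N-1}}{N}\varepsilon^{-N\eta}\to\infty
\end{align*}
as $\varepsilon\to0$, which proves the claim.

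The only delicate point is the normalization computation, in particular making sure the constant in \eqref{broken-line} gives energy exactly $1$, with no loss. If it came out as some $M>1$, then rescaling by $M^{-1/N}$ would shift the critical threshold and the argument would break. The computation above confirms $M=1$. Since $X_1^{-\beta}\ge Y_1^{-\beta}$, the statement also yields the "only if" part of Theorem \ref{moser weighted3}.
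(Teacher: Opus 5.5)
Your proposal is correct and is essentially the paper's own proof. You use the same test family $\xi_\varepsilon$ and restrict the integral to $B(0,\varepsilon)$, where the exponent is at least $\tilde\alpha\ln\frac1\varepsilon$ with $\tilde\alpha=N\alpha/\alpha_{N,\beta}>N$, which forces blow-up like $\varepsilon^{N-\tilde\alpha}$. Your explicit verification that $\|\nabla\xi_\varepsilon\|_{N,w_{1\beta}}=1$ supplies the computation the paper leaves as ``it can be checked'' (since $\xi_\varepsilon'$ is unbounded as $r\to1$, admissibility really rests on this finite-energy computation rather than on a bounded-gradient argument, exactly as in the paper).
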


\begin{proof}
    For $0<\varepsilon<1$, consider the family of functions defined by \eqref{broken-line}. 
   It can be checked that for any $0<\varepsilon<1$, $\left\lVert \nabla \xi_{\varepsilon} \right\rVert_{N, w_{1\beta}}=1.$
   We now denote $\Tilde{\alpha} \coloneqq N\alpha/\alpha_{N,\beta} > N$ and consider the integral,
    \begin{align*}
         \int_{0}^1 e^{\alpha \lvert \xi_{\varepsilon} \rvert ^{N'} \left(\ln \frac{1}{r}\right)^{\beta}} r^{N-1} \, dr&\ge \int_{0}^{\varepsilon} \exp \left(\frac{\alpha}{\omega_{N-1}^{\frac{1}{N-1}}(1-\beta)} \left(\ln \frac{1}{\varepsilon}\right)^{1-\beta} \left(\ln \frac{1}{r}\right)^{\beta}\right) r^{N-1} \, dr\\
        &\ge \int_{0}^{\varepsilon} e^{\Tilde{\alpha} \ln \frac{1}{\varepsilon} } r^{N-1} \, dr= \frac{1}{N} \varepsilon^{N- \Tilde{\alpha}} \to \infty
    \end{align*}
    as $\varepsilon \to 0$. This completes the proof. \\
\end{proof} 
Since $\ln \frac{1}{\lvert x \rvert} \le \ln 
\frac{e}{\lvert x \rvert}$, so we have the following Corollary.
\begin{corollary} \label{final battle X_1}
     For $\alpha > N \omega_{N-1}^{\frac{1}{N-1}}(1-\beta) $, we have,
    \begin{align*}
         \sup_{\left \lbrace u \in W^{1,N}_{0,rad}\left(\mathbb{B}_{N}, w_{1\beta}\right) :\left \lVert \nabla u \right\rVert_{N, w_{1\beta}} \le 1\right\rbrace} e^{\alpha \lvert u \rvert ^{N'} \left(\ln \frac{e}{\lvert x \rvert}\right)^{\beta}} \, dx = \infty.
    \end{align*}
\end{corollary}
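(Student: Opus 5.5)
The corollary should follow from Proposition \ref{final battle} by a pointwise comparison of the integrands. Nothing new needs to be constructed.

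For every $x\in\mathbb{B}_N\setminus\{0\}$ we have $0<\ln\frac{1}{|x|}\le \ln\frac{e}{|x|}$, so for $\beta>0$
\[
\Bigl(\ln \tfrac{1}{|x|}\Bigr)^{\beta}\le \Bigl(\ln \tfrac{e}{|x|}\Bigr)^{\beta}=X_1^{-\beta}(x).
\]
Since $t\mapsto e^{\alpha|u|^{N'}t}$ is nondecreasing, for each admissible $u$ (radial, in $W^{1,N}_{0,rad}(\mathbb{B}_N,w_{1\beta})$, with $\lVert\nabla u\rVert_{N,w_{1\beta}}\le 1$) we get
\[
\int_{\mathbb{B}_N} e^{\alpha |u|^{N'}(\ln\frac{1}{|x|})^{\beta}}\,dx \le \int_{\mathbb{B}_N} e^{\alpha |u|^{N'}(\ln\frac{e}{|x|})^{\beta}}\,dx .
\]
Taking the supremum over the same admissible class and applying Proposition \ref{final battle} for $\alpha>\alpha_{N,\beta}=N\omega_{N-1}^{\frac1{N-1}}(1-\beta)$, the left side is already $\infty$. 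Hence so is the right side, which is the claim of the corollary (the statement as printed omits the $\int_{\mathbb{B}_N}$, which is clearly intended).

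One could instead test the family $\xi_\varepsilon$ from \eqref{broken-line} directly, since $\lVert\nabla\xi_\varepsilon\rVert_{N,w_{1\beta}}=1$. On $B(0,\varepsilon)$ the exponent is at least $\tilde\alpha\ln\frac1\varepsilon$ with $\tilde\alpha>N$, so the integral is at least $\frac{\omega_{N-1}}{N}\varepsilon^{N-\tilde\alpha}\to\infty$ as $\varepsilon\to0$. There is no genuine obstacle here. The only points to check are that the monotonicity comparison holds (it needs $\beta>0$, which is assumed) and that both suprema are taken over the same class of functions, which they are.

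Combined with Propositions \ref{second battle} and \ref{third battle X_1}, this corollary gives the "only if" direction of Theorem \ref{moser weighted3}.
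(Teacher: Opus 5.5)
Your proposal is correct and matches the paper's proof: the corollary is deduced from Proposition \ref{final battle} using the pointwise bound $\ln\frac{1}{|x|}\le\ln\frac{e}{|x|}$ and the monotonicity of the exponential. Your alternative direct test with $\xi_\varepsilon$ just repeats the proof of that proposition. One minor correction: the corollary alone already gives the ``only if'' direction of Theorem \ref{moser weighted3}, while Propositions \ref{second battle} and \ref{third battle X_1} supply the ``if'' direction.
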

   \begin{proof} [Proof of \Cref{moser weighted3}]
       When $\alpha\leq \alpha_{N,\beta}$, the finiteness of  
$\mathcal{C}_{N, \alpha, w_{1\beta},rad}$, as defined in \eqref{equation sharp radial moser beta}, follows from \ref{second battle} and \ref{third battle X_1}. Conversely, \Cref{final battle X_1} proves that 
$\mathcal{C}_{N, \alpha, w_{1\beta},rad}$ is $\infty$ when $\alpha >\alpha_{N,\beta}$.
  This completes the proof. \\ 
\end{proof}

\appendix
\section{The Gap between Leray Energy and weighted Energy}\label{app A}

\begin{lemma} \label{lerray fail}
  Let $\Omega \subset \mathbb{R}^N$ be open bounded containing the origin and $N\ge 3$. Then there does not exist any $c>0$ which is independent of $u$ such that the following holds,
    \begin{align}\label{lerray fail eq}
        I_{N,\Omega}[u] \le c \left\lVert \nabla v \right\rVert_{N,w_{21},\Omega} , \quad \text{for all} \quad u \in C^1_{c}(\Omega \setminus \{0\}),
    \end{align}
    where $v= u X_1^{1-\frac{1}{N}}$ and $I_{N,\Omega}$ is as defined in \eqref{reduced energy leray}.
\end{lemma}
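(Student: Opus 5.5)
The plan has two layers: a quick homogeneity argument that already contradicts \eqref{lerray fail eq} as written, and a substantive construction that also rules out the corrected form with $\lVert \nabla v \rVert^N_{N,w_{21},\Omega}$ on the right.

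\textbf{Homogeneity.} The left side of \eqref{lerray fail eq} is $N$-homogeneous in $u$, while the right side, being a norm of $v=uX_1^{1-1/N}$, is $1$-homogeneous. It suffices to find one $u_0\in C^1_c(\Omega\setminus\{0\})$ with $I_{N,\Omega}[u_0]>0$. This holds for any nonzero $u_0$ supported in an annulus away from the origin, by the weak coercivity of $I_{N,\Omega}$ from the improved Hardy inequalities of \cite{ACR,BFT}. Replacing $u_0$ by $\lambda u_0$ and letting $\lambda\to\infty$ gives $\lambda^N I_{N,\Omega}[u_0]\le c\lambda\lVert\nabla v_0\rVert_{N,w_{21},\Omega}$, which is impossible.

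\textbf{Ground state expansion.} For the scale-invariant version we must show that $I_{N,\Omega}[u]/\lVert \nabla v \rVert^N_{N,w_{21},\Omega}$ is unbounded. After scaling, take $R_\Omega=1$ and write $u=X_1^{-1/N'}v$. Then
\[
\nabla u = X_1^{-1/N'}(a+b), \qquad a=\nabla v, \qquad b=-\tfrac{1}{N'}\,v\,X_1\,\tfrac{x}{\lvert x\rvert^2}.
\]
I will use the elementary inequality
\[
\lvert a+b\rvert^N-\lvert b\rvert^N-N\lvert b\rvert^{N-2}b\cdot a\ \ge\ c_N\big(\lvert b\rvert^{N-2}\lvert a\rvert^2+\lvert a\rvert^N\big).
\]
Multiplying by $X_1^{1-N}$ and integrating, the term $\int\lvert b\rvert^NX_1^{1-N}$ together with the integrated-by-parts cross term $N\int\lvert b\rvert^{N-2}b\cdot a\,X_1^{1-N}$ reproduces exactly the Leray potential term (this is the ground state representation of \cite[Proposition 2.6]{PsaradakisJFA}). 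This yields
\[
I_{N,\Omega}[u]\ \ge\ c_N\int_{\Omega}\lvert v\rvert^{N-2}\lvert\nabla v\rvert^2\,\frac{X_1^{N-2}}{\lvert x\rvert^{N-2}}\,X_1^{1-N}\,dx
\;=\;c_N\int_{\Omega}\frac{\lvert v\rvert^{N-2}\lvert\nabla v\rvert^2}{\lvert x\rvert^{N-2}X_1}\,dx .
\]
The only delicate check here is that the boundary terms vanish, which holds because $v\in C^1_c(\Omega\setminus\{0\})$.

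\textbf{Test family.} The right side of the display is quadratic in $\nabla v$ when $\lvert v\rvert\sim1$, whereas $\lVert\nabla v\rVert^N_{N,w_{21}}$ is of order $N>2$. So I take $v$ of size $1$ with very small gradient, spread over a long logarithmic range. Set $s=\ln\ln(e/\lvert x\rvert)$ and let $v=\chi(s/L)$, where $\chi$ is a fixed smooth bump supported in $(1,3)$ and equal to $1$ on $[3/2,5/2]$. Then $\lvert\nabla v\rvert\approx L^{-1}\lvert x\rvert^{-1}X_1$, and in the variable $s$ one has $dx\approx e^{-Ne^{s}}e^{s}\,ds$ up to constants. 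I will carry out this change of variables to compare
\[
\int\lvert\nabla v\rvert^N X_1^{1-N}\,dx \quad\text{(a factor } L^{-N}\text{)}
\qquad\text{with}\qquad
\int\frac{\lvert v\rvert^{N-2}\lvert\nabla v\rvert^2}{\lvert x\rvert^{N-2}X_1}\,dx \quad\text{(a factor } L^{-2}\text{)}.
\]
Both are taken over the same $s$-range $[L,3L]$, where the powers of $\lvert x\rvert$ cancel identically. Since $N\ge3$, the ratio grows like $L^{N-2}\to\infty$, which contradicts the scale-invariant inequality.

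\textbf{Main obstacle.} The hardest step is getting the radial measure factors exactly right, that is, confirming that the powers of $\lvert x\rvert$ and $X_1$ in the two integrals match on the support. If they do not, I will instead localize $v$ on a dyadic annulus $\{r<\lvert x\rvert<2r\}$ with a slow profile in $\ln\lvert x\rvert$ of length $L$ and redo the comparison. In that setting $X_1$ is essentially constant, and the gain $L^{N-2}$ survives trivially.
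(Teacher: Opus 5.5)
Your proposal is correct and is essentially the paper's proof. The paper quotes \cite[Proposition 2.2]{di2024optimal} for the same lower bound $\int_{\Omega}|x|^{2-N}|v|^{N-2}|\nabla v|^2X_1^{-1}\,dx\le \kappa_N I_{N,\Omega}[u]$ that you derive from the ground state expansion. Like your corrected version, it tacitly uses $\lVert\nabla v\rVert^N_{N,w_{21},\Omega}$ on the right. It then tests with the normalized Moser-type profile $\ln\ln(e\delta/|x|)/(\ln\ln(e\delta/r_1))^{1/N}$, whose quadratic term grows like $(\ln\ln(e\delta/r_1))^{N-2}$; this is the same mechanism as your $L^{1/N'}\chi(s/L)$.

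Your stated obstacle is not one: in both integrals the radial measure reduces exactly to $X_1\,dr/r=|ds|$. Your fallback would not have worked anyway, since a profile slow only in $\ln|x|$ gives a bounded ratio. Finally, because your bump is supported away from the origin, you avoid the density step via Theorem~\ref{approximation}, which the paper needs because its test function is constant near $0$.
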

\begin{proof}
    If possible, assume that for some $c>0$, \eqref{lerray fail eq} holds. By \cite[Proposition 2.2]{di2024optimal} we have 
    \begin{align*}
        \int_{\Omega} \lvert x \rvert^{2-N} \lvert v \rvert^{N-2} \lvert \nabla v \rvert^2 X_1^{-1} \, dx \le \kappa_{N} I_N[u]  \quad \text{for all} \quad u \in C^1_{c}(\Omega \setminus \{0\}),
    \end{align*}
    where $\kappa_N =\frac{2}{N}N'^{N-2}$ and $v= u X_1^{1-\frac{1}{N}}$.

Combining this with \eqref{lerray fail eq} and using the fact that $u \in C^1_{c}(\Omega\setminus \{0\})$ we obtain
\begin{align} \label{I_N[u]  J_N[u]}
      \int_{\Omega} \lvert x \rvert^{2-N} \lvert v \rvert^{N-2} \lvert \nabla v \rvert^2 X_1^{-1} \, dx \le c\kappa_{N} \int_{\Omega} \lvert \nabla v\rvert^N w_{21} \, dx,
\end{align}
for all $v \in C^1_c(\Omega \setminus \{0\}).$ Hence by Theorem \ref{approximation} we conclude that, \eqref{I_N[u]  J_N[u]} is true for all $v \in  W^{1,N}_{0}\left(\Omega, w_{21}\right).$ Fix $\delta>0$ such that $B(0,\delta) \subset \Omega$. Now for any $0<r_1<\delta$, consider the family of functions given by,
    \begin{align*}
        v_{r_1}(r) = \begin{cases}
            \left(\ln \ln \frac{e \delta}{r_1}\right)^{1/N'}, \quad 0 \le r < r_1\\
            \frac{\ln \ln \frac{e \delta}{r}}{\left(\ln \ln \frac{e\delta}{r_1}\right)^{1/N}}, \quad r_1 \le r <\delta\\
            0 , \quad r \ge \delta.
        \end{cases}
    \end{align*}
Then by Lemma \ref{contains lip functions}, $v_{r_1} \in W^{1,N}_{0}\left(\Omega, w_{21}\right)$. Also, it is easy to see that $v_{r_1}$ remains bounded in $W^{1,N}_{0}\left(\Omega, w_{21}\right)$ as $r_1\to 0$. On the other hand,
\begin{align*}
\frac{1}{\omega_{N-1}}\int_{\Omega}\frac{ \lvert v \rvert^{N-2}}{ \lvert x \rvert^{2-N}} \left\lvert \nabla v \right\rvert^2 X_1^{-1} \, dx &\geq \int_{r_1}^{\delta} \lvert v_{r_1}(r) \rvert^{N-2} \lvert v_{r_1}'(r)\rvert^2 \ln \frac{e R_{\Omega}}{r} r \, dr\\
    &=\int_{r_1}^{\delta} \frac{\left(\ln \ln \frac{e \delta}{r}\right)^{N-2}}{\ln \ln \frac{e \delta}{r_1}} \frac{1}{\left(\ln \frac{e \delta}{r}\right)^2} \left(\ln \frac{R_{\Omega}}{\delta} + \ln \frac{e \delta}{r}\right)\frac{1}{r} \,dr\\
    &\ge \frac{1}{\ln \ln \frac{e \delta}{r_1}}\int_{0}^{\ln \ln \frac{e \delta}{r_1}} z^{N-2} \, dz\\
    &=\frac{1}{N-1} \left(\ln \ln \frac{e \delta}{r_1}\right)^{N-2} \to \infty 
\end{align*}
as $r_1 \to 0,$ this gives a contraction. Therefore our assumption was wrong and the proof is complete.\\
\end{proof}

\section{Failure of weighted P{\'o}lya--Szeg{\"o} inequality}\label{app B}
An immediate consequence of the following lemma is the failure of the P{\'o}lya--Szeg{\"o} inequality with the weight $w_{i\beta}$. Notably, the same result follows from \eqref{ruf calanchi sup infinite}, \eqref{ruf2} and Theorem \ref{ruf1}.
\begin{lemma} \label{pz fails}
    Let $N\geq 2$, $0<\beta <1$, if $i=1$ and $0<\beta <\infty$, if  $i=2$.  Then there exists a bounded sequence $u_k \in W^{1,N}_{0}(\mathbb{B}_N,w_{i\beta})$ such that,  $\left \lVert \nabla u_k^\ast \right\rVert_{N, w_{i\beta}} \to \infty$, as $k\to \infty$. Here, $u_k^\ast$ denotes the symmetric decreasing rearrangement of $u_k$ with respect to the Lebesgue measure.
 \end{lemma}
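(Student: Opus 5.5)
We construct off-centre Moser functions placed near the boundary of $\mathbb{B}_N$, where the weight $w_{i\beta}$ is close to $1$. After symmetric decreasing rearrangement their mass moves to the origin, where the weight is huge. Concretely, fix $x_p=(1-2\rho,0,\dots,0)$ with $\rho>0$ small, and for $k\ge 2$ set
\begin{align*}
u_k(x)=\begin{cases}(\ln k)^{1/N'}, & |x-x_p|<\rho/k,\\ \dfrac{\ln(\rho/|x-x_p|)}{(\ln k)^{1/N}}, & \rho/k\le |x-x_p|<\rho,\\ 0, & |x-x_p|\ge \rho.\end{cases}
\end{align*}
By Lemma \ref{contains lip functions}, $u_k\in W^{1,N}_0(\mathbb{B}_N,w_{i\beta})$. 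On $B(x_p,\rho)$ we have $|x|\ge 1-3\rho$, so $w_{i\beta}$ is bounded there by a constant depending only on $\rho,\beta,N$. Since $\int|\nabla u_k|^N\,dx=\omega_{N-1}$, it follows that $\|\nabla u_k\|_{N,w_{i\beta}}\le C$ uniformly in $k$.

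The rearrangement is explicit because $u_k$ is radial about $x_p$ and decreasing in $|x-x_p|$. Hence $u_k^\ast(x)=u_k(x_p+x)$, that is, the same profile recentred at $0$:
\begin{align*}
u_k^\ast(r)=(\ln k)^{1/N'} \text{ for } r<\rho/k,\qquad u_k^\ast(r)=\frac{\ln(\rho/r)}{(\ln k)^{1/N}} \text{ for } \rho/k\le r<\rho.
\end{align*}
We then compute directly:
\begin{align*}
\int_{\mathbb{B}_N}|\nabla u_k^\ast|^N w_{i\beta}\,dx=\frac{\omega_{N-1}}{\ln k}\int_{\rho/k}^{\rho} w_{i\beta}(r)\,\frac{dr}{r}.
\end{align*}
For $i=2$, $w_{2\beta}(r)=(\ln(e/r))^{\beta(N-1)}$. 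For $i=1$, $w_{1\beta}(r)=(\ln(1/r))^{\beta(N-1)}$. Substituting $z=\ln(1/r)$, the integral behaves like $(\ln k)^{\beta(N-1)+1}/(\beta(N-1)+1)$ for large $k$. Therefore
\begin{align*}
\|\nabla u_k^\ast\|^N_{N,w_{i\beta}}\gtrsim (\ln k)^{\beta(N-1)}\to\infty \quad (k\to\infty),
\end{align*}
since $\beta>0$.

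The only point needing care is the uniform bound on $\|\nabla u_k\|_{N,w_{i\beta}}$. This holds because $B(x_p,\rho)$ stays at distance at least $1-3\rho>0$ from the origin, where both weights are bounded. One also checks that $u_k^\ast$ is Lipschitz and vanishes near $\partial\mathbb{B}_N$, so that it lies in the space and the norm computation is legitimate, again by Lemma \ref{contains lip functions}. Apart from this, the argument is a routine one-variable computation.
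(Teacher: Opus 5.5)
Your proposal is correct and follows essentially the same route as the paper: an off-centre Moser function supported near $\partial\mathbb{B}_N$ has uniformly bounded weighted energy, and its rearrangement is the same profile recentred at the origin, whose weighted energy grows like $(\ln k)^{\beta(N-1)}$. The differences from the paper are cosmetic. You centre at $x_p=(1-2\rho,0,\ldots,0)$, so the support stays strictly inside the ball, whereas the paper's ball $B(x_p,p)$ with $x_p=(1-p,0,\ldots,0)$ touches the boundary. You also drop the $\omega_{N-1}^{-1/N}$ normalization.
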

\begin{proof} 
   We present the proof for $w_{2 \beta},$ the case $w_{1 \beta}$ is similar. Consider the sequence of functions,
    \begin{align}
        u_{k,a}(x) \coloneqq \begin{cases}
            \frac{1}{\omega_{N-1}^{1/N}} \left(\ln k \right)^{1/N'}, \quad \lvert x - x_p \rvert < \frac{p}{k}\\
            \frac{1}{\omega_{N-1}^{1/N}} \frac{\ln \frac{p}{\lvert x - x_p \rvert}}{\left(\ln k\right)^{1/N}}, \quad \frac{p}{k} \le \lvert x - x_p \rvert < p\\
            0, \quad \text{otherwise},
        \end{cases}
    \end{align}
 where $x_p=(1-p,0,0,...0) \in \mathbb{R}^N$ for some $p \in (0, 1/4)$. Then it is easy to see that $\left \lVert \nabla u_{k,a} \right\rVert_{N, w_{2\beta}} \le c$, for some constant $c\equiv c(p,N,\beta)$.  Now note that,
 \begin{align}
     u^*_{k,a}(x) \coloneqq \begin{cases}
         \frac{1}{\omega_{N-1}^{1/N}} \left(\ln k\right)^{1/N'}, \quad \lvert x \rvert < \frac{p}{k}\\
         \frac{1}{\omega_{N-1}^{1/N}} \frac{\ln \frac{p}{\lvert x \rvert}}{\left(\ln k \right)^{1/N}}, \quad \frac{p}{k} \le \lvert x \rvert < p\\
            0, \quad \text{otherwise}.
     \end{cases}
 \end{align}
 Therefore we have,
 \begin{align*}
     \int_{\mathbb{B}_N} \lvert \nabla u^*_{k,a}(x) \rvert^N \left(\ln \frac{e}{\lvert x \rvert}\right)^{\beta(N-1)} dx &= \frac{\omega_{N-1}}{\ln k} \int_{\frac{p}{k}}^{p}  \left(\ln \frac{e}{r}\right)^{\beta(N-1)} \frac{1}{r}\, dr\\
     &=  \frac{\omega_{N-1}}{\ln k} \frac{\left(\ln \frac{ek}{p}\right)^{\beta(N-1)+1} - \left(\ln \frac{e}{p}\right)^{\beta(N-1)+1}}{\beta(N-1)+1}\\ &\to \infty
 \end{align*}
 as $r_1 \to 0$, since $\beta > 0.$ This completes the proof. \\
 \end{proof}

\section*{Acknowledgement}
\par \emph{Adimurthi} gratefully acknowledges the Indian Institute of Science for the Satish Dhawan Visiting Chair Professorship, which he held during the preparation of this manuscript.
\par \emph{Arka Mallick} is partially supported by ANRF ARG Grant. Grant Number: $ANRF/ARG/2025/000348/MS$.

\end{document}